\documentclass[a4paper,11pt,draft]{amsart}
\usepackage[margin=22mm]{geometry}
\usepackage{amssymb, amsmath, amsthm, amscd}

\usepackage[T1]{fontenc}
\usepackage{eucal,mathrsfs,dsfont}
\usepackage{color}


\renewcommand{\le}{\leqslant}
\renewcommand{\ge}{\geqslant}

\definecolor{mno}{rgb}{0.5,0.1,0.5}

\newcommand{\R}{\mathds R}

\newcommand{\w}{\omega}

\newcommand{\Pp}{\mathds P}
\newcommand{\Ee}{\mathds E}

\newcommand{\I}{\mathds 1}
\newcommand{\N}{\mathds{N}}
\newcommand{\Z}{\mathds Z}

\newcommand{\D}{\mathscr{D}}

\newcommand{\F}{\mathscr{F}}

\newtheorem{theorem}{Theorem}[section]
\newtheorem{lemma}[theorem]{Lemma}
\newtheorem{proposition}[theorem]{Proposition}

\theoremstyle{definition}

\newtheorem{remark}[theorem]{Remark}

\numberwithin{equation}{section}

\begin{document}
\allowdisplaybreaks
\title[Random Conductance Models with Stable-like Jumps]
{\bfseries Random Conductance Models with Stable-like Jumps:
Quenched Invariance Principle}
\author{Xin Chen\qquad Takashi Kumagai\qquad Jian Wang}
\thanks{\emph{X.\ Chen:}
   Department of Mathematics, Shanghai Jiao Tong University, 200240 Shanghai, P.R. China. \texttt{chenxin217@sjtu.edu.cn}}
\thanks{\emph{T.\ Kumagai:}
 Research Institute for Mathematical Sciences,
Kyoto University, Kyoto 606-8502, Japan.
\texttt{kumagai@kurims.kyoto-u.ac.jp}}
  \thanks{\emph{J.\ Wang:}
    College of Mathematics and Informatics \& Fujian Key Laboratory of Mathematical Analysis and Applications, Fujian Normal University, 350007 Fuzhou, P.R. China. \texttt{jianwang@fjnu.edu.cn}}

\date{}

\maketitle

\begin{abstract} We study the quenched invariance principle for random conductance models with long range jumps on $\Z^d$, where
the transition probability from $x$ to $y$ is, on average, comparable to
$|x-y|^{-(d+\alpha)}$ with $\alpha\in (0,2)$
but is allowed to be degenerate.
Under some moment conditions
on the conductance, we prove that the scaling limit of
the Markov process is  a symmetric
$\alpha$-stable L\'evy process on $\R^d$.
The well-known corrector method in homogenization theory does not seem to work
in this setting. Instead, we utilize probabilistic potential theory for the corresponding jump processes. Two essential ingredients of our proof are the tightness estimate and the H\"{o}lder regularity of caloric
functions for non-elliptic $\alpha$-stable-like processes on graphs. Our method is robust enough to apply not only for $\Z^d$ but also for more
general graphs whose scaling limits are nice metric measure spaces.

\medskip

\noindent\textbf{Keywords:} random conductance model; long range jump; stable-like process; quenched invariance principle
\medskip

\noindent \textbf{MSC 2010:} 60G51; 60G52; 60J25; 60J75.
\end{abstract}
\allowdisplaybreaks

\section{Introduction and Main Results}\label{section1}
Over the last decade, significant progress has been made concerning the quenched invariance principle on random conductance models. A typical and important
example is random walk on the infinite cluster of supercritical
bond percolation on $\mathbb Z^d$. It is shown that the scaling limit of the random walk
is a (constant time change of) Brownian motion on $\mathbb R^d$ in the quenched sense,
namely almost surely with respect to the randomness of the media.
See \cite{ABDH,BD,BeB,BP,CCK,M,MatP,SS} for related progress on this subject and \cite{Bs,Kum} for overall
introduction on this area and related topics.
Besides i.i.d.\
nearest neighbor
random conductance models, recently there have been great developments on the
scaling limit of short range random conductance models on stationary ergodic media
(or the media with suitable correlation conditions),
see  \cite{ADS1,ADS2,ACDS,BsRod,DNS,PRS} for more details. Here, short range means only
finite number of conductances are directly connected to each vertex.

Unlike the short range case, there are only a few results concerning quenched invariance principle for long range random conductance models due to their fundamental technical  difficulties. There is a beautiful paper by Crawford and Sly \cite{CS1} that obtains
the quenched invariance principle for random walk on the long range percolation cluster to
an isotropic $\alpha$-stable L\'evy process in the range $0<\alpha<1$. While \cite{CS1}
proves the invariance principle for a very singular object like the long range percolation, the arguments heavily rely on the special
properties
(see for instance \cite{Be,Bs1,CS0} for related discussions) of the long range percolation and cannot be easily
generalized to the setting of general (long range) random conductance models.

In this paper, we will discuss the quenched invariance principle on long range random conductance models.
In particular, we consider the case where the conductance between $x$ and $y$ is, on average, comparable to $|x-y|^{-(d+\alpha)}$ with $\alpha\in (0,2)$
but is allowed to be degenerate.
In this setting,
there is a significant difficulty in applying classical techniques of homogenization for nearest neighbor  random walk (in
random environment) due to the existence of long range conductances.
 To emphasize the novelty of our paper, we first make some remarks.
 Some more details and technical difficulties of our methods are further discussed at the end of the introduction.
\begin{itemize}\item [(i)] The well known harmonic decomposition method (also called  the corrector method in the literature) has been widely used
for the nearest neighbor random walk in random media,
 see \cite{ABDH,ADS1,ADS2,ACDS,BD,BeB,BsRod,SS}.
Because of the lack of
 $L^2$ integrability, such method does not work (at least in a straightforward way) for our long range model here.
\item [(ii)] Due to singularity in the infinite cluster of long range percolation,
 \cite{CS1} established the quenched invariance principle of the associated random walk in the sense of
 weak convergence on $L^q$
 (not the Skorohod topology)  and only for the case $0<\alpha<1$. In the present paper, we can justify
 quenched invariance principle of our model under the Skorohod topology for all $\alpha\in (0,2)$.
(To be fair, the long range percolation is \lq\lq more singular\rq\rq, and it is not
included in our conductance model.)
Moreover, compared with \cite{CKK}, we can prove the quenched invariance principle for the process with fixed
initial point, see e.g. Remark \ref{r4-6} below.
 \item[(iii)] Our approach is to utilize recently developed
de Giorgi-Nash-Moser theory for jump processes (see for instance \cite{BBK,CK,CK08,CKW1}).
While detailed heat kernel estimates and Harnack inequalities are
established for uniformly elliptic $\alpha$-stable-like processes,
the arguments rely on pointwise estimates of
the jumping density (conductance in this setting), which cannot hold in our setting unless we assume uniform ellipticity of
conductance. Furthermore, as will be shown in the accompanied paper \cite{CKWan}, Harnack inequalities do not hold (even for large enough balls) in general on long range random conductance models. For these reasons, highly non-trivial modifications are required to work on the present random conductance setting.
 Roughly speaking, in this paper we are concerned
 with the long range conductance model
 with large scale summability conditions on the conductance, which can be viewed as a counterpart of the so-called \lq\lq good ball
 condition\rq\rq\,\,in \cite{B,BC} to the non-local setting.
 We believe that our methods are rather robust and
could be fundamental tools in exploring scaling limits of random walks on long range random media.
\item[(iv)] The advantage of our methods is that they do not use translation invariance of the original graph
(we do not use the idea of \lq\lq the environment viewed from the particle\rq\rq); hence
they are applicable not only for $\Z^d$ but also for more
general graphs whose scaling limits are nice metric measure spaces.
Even in the setting of $\Z^d$,
our results can be applied to
the case that the conductance is independent but
possibly degenerate and
not necessarily identically distributed; that is, our results are efficient for some long range random walks on
degenerate non-ergodic media.
The disadvantage
is, since we use the Borel-Cantelli lemma to deduce quenched estimates,
the arguments require \lq\lq strong mixing properties\rq\rq\, of the random conductance
(see \eqref{p3-2-1}--\eqref{l3-1-1-1} below). Hence our method cannot be generalized to
general stationary ergodic case on $\Z^d$.
\end{itemize}

To illustrate our contribution,
we present the statement
about the quenched invariance principle on a half/quarter space
$F:= \mathbb{R}^{d_1}_+\times\mathbb{R}^{d_2}$ where
$d_1,d_2\in \mathbb{N}\cup\{0\}$.
(This is the simplest example of state spaces that is not translation invariant when $d_1\ne 0$.)
The readers may refer to Sections \ref{section3} and \ref{section5} for general results.
Let $\mathbb{L}:=\mathbb{Z}^{d_1}_+\times\mathbb{Z}^{d_2}$.
Consider a Markov generator
\begin{equation}\label{eq:geneoe}
L^\omega_{\mathbb{L}} f(x)=\sum_{y\in \mathbb{L}}(f(y)-f(x))
\frac{w_{x,y}(\w)}{|x-y|^{d+\alpha}},
\quad x\in \mathbb{L},
\end{equation}
where $d=d_1+d_2$, $\alpha\in (0,2)$ and $\{w_{x,y}(\w):x,y\in \mathbb{L}\}$ is a sequence of random variables
such that $w_{x,y}(\w)=w_{y,x}(\w)\ge0$
for all $x\neq y$. We use the convention that
$w_{x,x}(\w)=w_{x,x}^{-1}(\w)=0$ for all $x \in \mathbb{L}$.
Let $(X^{\w}_t)_{t\ge 0}$ be the corresponding Markov process.
For every $n\ge1$ and $\w\in \Omega$, we define a process
$X_{\cdot}^{(n),\w}$ on $V_n=n^{-1}\mathbb{L}$ by
$X_{t}^{(n),\w}:={n}^{-1}X_{n^{\alpha}t}^{\w}$ for any $t\ge0$. Let
$\Pp_{x}^{(n),\w}$ be the law of $X_{\cdot}^{(n),\w}$ with initial
point $x\in V_n$. Let $Y:=((Y_t)_{t\ge0},
(\Pp_{x}^Y)_{x\in F})$ be a $F$-valued strong Markov process.
 We say that the quenched invariance principle holds for
$X_{\cdot}^{\w}$ with limit process being $Y$, if for any $\{x_n \in
V_n:n\ge1\}$ such that $\lim_{n \rightarrow \infty}x_n=x$ for
some $x \in F$, it holds that for $\Pp$-a.s.\ $\w\in \Omega$ and
every $T>0$, $\Pp_{x_n}^{(n),\w}$ converges weakly to $\Pp_{x}^Y$ on
the space of all probability measures on $\D([0,T];F)$,  the collection
of c\`{a}dl\`{a}g $F$-valued functions on $[0,T]$ equipped with
the Skorohod topology.

\begin{theorem}\label{th1} Let $d>4-2\alpha$, and $E=\{(x,y): \ x,y\in \mathbb{L}\}$ be the collection of all
unordered pairs
on $\mathbb{L}$.
Suppose that $\{w_{x,y}: (x,y)\in E\}$ is a sequence of non-negative
independent random variables such that $\Ee w_{x,y}=1$ for all
$x,y\in \mathbb{L}$,
\begin{equation}\label{eq: prob}
\sup_{x,y \in \mathbb{L},x\neq y}\Pp\big(w_{x,y}=0\big)<1/2
\end{equation} and
\begin{equation}\label{eq:fhibw}
\sup_{x,y\in \mathbb{L}}\Ee[w_{x,y}^{2p}]<\infty,\quad \sup_{x,y\in \mathbb{L}}\Ee[w_{x,y}^{-2q}\I_{\{w_{x,y}>0\}}]<\infty
\end{equation}
with
\begin{equation}\label{eq:fhibw22}
p>\max\big\{{(d+2)}/{d}, {(d+1)}/(2(2-\alpha))\big\},\quad q>{(d+2)}/{d}.
\end{equation}
Then the quenched invariance principle holds for
 $X^{\w}_{\cdot}$ with the limit process being a reflected symmetric $\alpha$-stable process
 $Y$ on $F$ with jumping measure $|z|^{-d-\alpha}\,dz$. \end{theorem}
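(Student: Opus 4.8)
The plan is to prove the quenched invariance principle by the classical two-step scheme — first tightness of the rescaled laws in the Skorohod space, then identification of every subsequential limit — but to carry out both steps using only \emph{large-scale} information on the conductances, since no pointwise ellipticity is available.

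\emph{Setup and reduction.} I regard $X^\w$ as the symmetric Hunt process of the regular Dirichlet form $\E^\w(f,f)=\tfrac12\sum_{x,y\in\mathbb L}(f(x)-f(y))^2 w_{x,y}(\w)|x-y|^{-d-\alpha}$ on $L^2$ of the counting measure. After rescaling, $X^{(n),\w}$ lives on $V_n=n^{-1}\mathbb L$, has jump kernel $J^{(n),\w}(x,z)=n^{-d}w_{nx,nz}(\w)|x-z|^{-d-\alpha}$ and generator $L^{(n),\w}f(x)=n^{-d}\sum_{z\in V_n}(f(z)-f(x))w_{nx,nz}(\w)|x-z|^{-d-\alpha}$; the target is the symmetric $\alpha$-stable process $Y$ on $F$ with Dirichlet form $\E^F(f,f)=\tfrac12\iint_{F\times F}(f(x)-f(y))^2|x-y|^{-d-\alpha}\,dx\,dy$. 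It then suffices to show, for $\Pp$-a.e.\ $\w$: (i) the family $\{\Pp^{(n),\w}_{x_n}\}_{n\ge1}$ is tight on $\D([0,T];F)$; and (ii) every weak limit point solves the martingale problem for $(L^F,C_c^2(F))$, which is well posed — whence the limit must be $\Pp^Y_x$ and, the limit being unique, the whole sequence converges.

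\emph{Quenched control of the environment.} Using independence of $\{w_{x,y}\}$ together with the moment bounds \eqref{eq:fhibw}--\eqref{eq:fhibw22}, Markov's inequality and the Borel--Cantelli lemma give, for $\Pp$-a.e.\ $\w$ and all large $n$, two-sided comparisons between suitably averaged sums of $w_{x,y}(\w)$ (and of $w_{x,y}^{-1}(\w)\I_{\{w_{x,y}>0\}}$) over boxes of side comparable to $n$ and the corresponding volumes, uniformly over all such boxes contained in a fixed compact set; the exponents in \eqref{eq:fhibw22} are precisely what is needed to make the relevant unions of bad events summable in $n$, and the threshold \eqref{eq: prob} keeps the density of vanishing conductances small enough that good boxes stay genuinely well connected. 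This family of macroscopic ``good balls'', on which the rescaled kernels behave in an averaged sense like the $\alpha$-stable one, plays the role of ellipticity throughout the remaining arguments.

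\emph{Tightness.} From the L\'evy system of $X^{(n),\w}$ and the environment estimates of the previous step, one bounds the jump rate $\nu^{(n),\w}(x)=\sum_{z}J^{(n),\w}(x,z)$ uniformly (in $n$ and on the good event) and then, via a Dynkin-formula/iteration argument, obtains an exit-time estimate of the type $\Pp^{(n),\w}_x(\tau_{B(x,r)}\le t)\le C\,t\,r^{-\alpha}$ for $r$ in a macroscopic range, with $C$ independent of $n$ and $\w$. This yields compact containment at each fixed time and
\[
\lim_{\delta\to0}\ \limsup_{n\to\infty}\ \sup_{x\in V_n}\ \Pp^{(n),\w}_x\Bigl(\sup_{0\le s\le\delta}\bigl|X^{(n),\w}_s-X^{(n),\w}_0\bigr|>\varepsilon\Bigr)=0 \qquad\text{for every }\varepsilon>0,
\]
hence tightness on $\D([0,T];F)$ by the standard criterion for c\`adl\`ag processes.

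\emph{Identification of the limit, and the main obstacle.} For $f\in C_c^2(F)$ I split $L^{(n),\w}f(x)$ into a near-diagonal part, controlled by a second-order Taylor bound $|f(z)-f(x)|\le C|z-x|^2$, whose total contribution over a cutoff $|z-x|\le\rho$ is $O(\rho^{2-\alpha})$ on good balls (and vanishes as first $n\to\infty$, then $\rho\to0$), and a far part, which is a Riemann sum governed by the environment law of large numbers from the second step; together these give $L^{(n),\w}f\to L^F f$ locally uniformly, $\Pp$-a.e. Passing to the limit in the Dynkin martingale $f(X^{(n),\w}_t)-f(X^{(n),\w}_0)-\int_0^t L^{(n),\w}f(X^{(n),\w}_s)\,ds$ shows every subsequential limit solves the $(L^F,C_c^2(F))$-martingale problem. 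To exclude pathological limits and handle the convergence $x_n\to x$ of starting points, I invoke the a priori H\"older regularity of parabolic functions for the non-elliptic rescaled stable-like processes, established uniformly in $n$ by a de Giorgi--Nash--Moser iteration run only over good balls: it makes the maps $x\mapsto\Ee^{(n),\w}_x[f(X^{(n),\w}_t)]$ equicontinuous, so any limiting semigroup is continuous and well defined at $x$, and $\Pp^{(n),\w}_{x_n}\Rightarrow\Pp^Y_x$. The crux of the whole argument is exactly this uniform H\"older estimate, intertwined with the exit-time bound above: with no pointwise lower bound on the jump kernel and with the Harnack inequality genuinely failing on these graphs (as shown in \cite{CKWan}), the Moser/de Giorgi machinery must be reworked to consume only the macroscopic summability of the environment step, and it is here that the hypotheses $d>4-2\alpha$, $p>\max\{(d+2)/d,(d+1)/(2(2-\alpha))\}$ and $q>(d+2)/d$ enter.
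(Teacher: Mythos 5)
Your outline correctly identifies the two ingredients — tightness plus identification with auxiliary H\"older regularity — but the identification step and, for $\alpha\ge 1$, the tightness step, are not carried out correctly, and the route you choose for identification genuinely does not work without further ideas. The paper's actual identification route is different (generalized Mosco convergence of the Dirichlet forms, not pointwise generator convergence), and this difference is not cosmetic: the Dirichlet-form approach is needed precisely because pointwise convergence of $L^{(n),\w}f$ fails.

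The central gap is in the claim that $L^{(n),\w}f\to L^Ff$ locally uniformly a.s. First, the inequality $|f(z)-f(x)|\le C|z-x|^2$ is false: Taylor gives a first-order term $\nabla f(x)\cdot(z-x)$ and only a second-order remainder. In the deterministic kernel $|z-x|^{-d-\alpha}$, the first-order term cancels by antisymmetry under $z\mapsto 2x-z$; but $w_{nx,nz}(\w)$ is symmetric only in $(x,z)\leftrightarrow(z,x)$, not under this reflection, so the first-order contribution is a mean-zero random sum that does \emph{not} cancel. Writing $v=nx$, $u=nz$, the near-diagonal fluctuation at a fixed point is
\[
n^{\alpha}\sum_{0<|u-v|\le n\rho}\bigl(f(u/n)-f(v/n)\bigr)\,\frac{w_{v,u}(\w)-1}{|u-v|^{d+\alpha}},
\]
whose variance, using $|f(u/n)-f(v/n)|\lesssim |u-v|/n$, is of order $n^{2\alpha-2}\sum_{0<|w|\le n\rho}|w|^{2-2(d+\alpha)}\asymp n^{2\alpha-2}$ (the series converges because $d>4-2\alpha$ forces $d+2\alpha>2$). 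For $\alpha>1$ this diverges, so $L^{(n),\w}f(x)$ has no pointwise limit a.s.; even for $\alpha=1$ the variance does not vanish. Consequently passing to the limit in the Dynkin martingale as you propose is not available. The paper avoids this entirely by working in the symmetric, quadratic (hence nonnegative and $L^2$) setting of the Dirichlet forms: Assumption {\bf (Mos.)} involves the averaged sums $n^{-2d}\sum_{x,y}w^{(n)}_{x,y}/\rho(x,y)^{d+\alpha-2}$, not a pointwise bound, and the Mosco/$L^2$-semigroup convergence (Proposition \ref{p3-1}) identifies the limit in $L^2$. The H\"older regularity of parabolic functions (Theorem \ref{T:holder}) is then the device that upgrades the $L^2$-identification to a fixed starting point, which is the content of Theorem \ref{t3-1} and is emphasized in Remark \ref{r4-6}; it does not merely yield "equicontinuity so any limiting semigroup is continuous," it is the mechanism by which the $L^2$-limit is seen to agree with the limit at the specific point.

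A secondary gap: your tightness argument asserts $\Pp^{(n),\w}_x(\tau_{B(x,r)}\le t)\le C t r^{-\alpha}$ via a direct Dynkin-formula computation with a Lipschitz test function. This is exactly the paper's Proposition \ref{L:tight}, and it only goes through for $\alpha\in(0,1)$: for $\alpha\ge 1$ the near-diagonal sum $\sum_{|y-z|\le r}w_{z,y}/|z-y|^{d+\alpha-1}$ appearing in $Lf_{x,r}(z)$ is not summable after averaging. For $\alpha\in[1,2)$ the paper instead has to truncate, localize the conductance outside the ball (Dirichlet form $\hat D^{x_0,R}$), prove an on-diagonal heat kernel bound for the truncated process (Proposition \ref{np2-1}), run the Bass--Nash mean-displacement argument (Proposition \ref{np-1}) and then compare back to $X$ via Meyer's construction (Lemma \ref{nl-1}, Proposition \ref{np-2}), arriving only at the weaker bound $C(t/r^\alpha)^{1/2}[1\vee\log(r^\alpha/t)]$ in Theorem \ref{exit}. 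Your proposal elides all of this.
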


\begin{remark}
\begin{itemize}
\item[(i)]
When $\alpha\in (0,1)$, the conclusion of Theorem \ref{th1} still holds true for $d>2-2\alpha$, if $p>\max\big\{{(d+2)}/{d},{(d+1)}/(2(1-\alpha))\big\}$ and $q>{(d+2)}/{d}.$
See Proposition \ref{ex3-1} for details.

\item[(ii)]
The probability
$1/2$ in \eqref{eq: prob} is far from optimal.
In fact, the critical probability $p_c$ allowing the conductances to be degenerate heavily depends on large scale properties
of the long-range percolation cluster associated with the random conductance model in our paper, which are
different from these of the nearest neighbor percolation cluster (see e.g. \cite{ABDH,B,BeB,CCK,M,SS}) or these of the long-range percolation cluster
investigated in \cite{Be,Bs1,CS0,CS1}. We do not know the exact value of $p_c$, and even whether $p_c=1$ or $p_c<1$.

\item[(iii)] We note that the integrability condition \eqref{eq:fhibw22} is far from
optimal too,  and we also do not even know what could be the optimal integrability condition.
Furthermore, by tracking the proofs of Lemma \ref{nl2-1} and Proposition \ref{ex3-1} below, the negative moment condition $\sup_{x,y\in \mathbb{L}}\Ee[w_{x,y}^{-2q}\I_{\{w_{x,y}>0\}}]<\infty$
in \eqref{eq:fhibw} is only required
to guarantee the local Poincar\'e inequality \eqref{nl2-1-2a}. For the i.i.d nearest neighbor percolation cluster, such kind of negative
moment condition can be removed by the domination behavior of the percolation cluster and time change arguments, see e.g. \cite{ABDH,M}.
However, as we mentioned above, since properties of the nearest neighbor percolation cluster are quite different from those of long-range percolation cluster
associated with the random conductance model in our paper, the arguments in \cite{ABDH,M} do not work for the present setting. By now we do not know whether this negative moment condition is essentially necessary.
\end{itemize}
\end{remark}
Here is one simple example that satisfies
\eqref{eq: prob} and \eqref{eq:fhibw}:
for each distinct $x,y\in \mathbb{L}$,
\begin{eqnarray*}
&\Pp(w_{x,y}=|x-y|^\varepsilon)=(3|x-y|^{2p\varepsilon})^{-1},\quad
\Pp(w_{x,y}=|x-y|^{-\delta})=(3|x-y|^{2q\delta})^{-1},\\
&\Pp\big(w_{x,y}=0\big)=
3^{-1},\quad \Pp(w_{x,y}=g(x,y))=1-(3|x-y|^{2p\varepsilon})^{-1}-(3|x-y|^{2q\delta})^{-1}-3^{-1},
\end{eqnarray*}
where $\varepsilon, \delta, p,q>0$ so that \eqref{eq:fhibw22} is satisfied, and
$g(x,y)$ are chosen so that $\Ee w_{x,y}=1$. (It is easy to see that
$c^{-1}\le g(x,y)\le c$ for some constant $c\ge1$.)

\medskip

At
the end of the introduction, let us briefly discuss technical difficulties and the ideas of the proof.  There
 are two essential ingredients in our proof; namely the tightness estimate and the H\"{o}lder regularity of caloric
 functions for non-elliptic $\alpha$-stable-like processes on graphs. In order to obtain the former estimate, we first split small jumps and big jumps, which is a standard approach for jump processes, and then change the conductance
 to the averaged one outside a ball (we call it
 the localization method). By this technique and the on-diagonal heat kernel upper bound (Proposition \ref{np2-1}),
 we can apply the so-called Bass-Nash method to control the mean
 displacement of the process (Proposition \ref{np-1}). The tightness estimate
 (Theorem \ref{exit}) is established by comparing
 the original process, truncated process and the localized process.
We note that when $0<\alpha<1$, tightness can be proved in a much simpler way
using martingale arguments (Proposition \ref{L:tight}).
The key ingredient for the H\"{o}lder regularity of caloric
functions (Theorem \ref{T:holder}) is to deduce the Krylov-type estimate
(Proposition \ref{Kr}) that controls the hitting probability to a large set
before exiting some parabolic cylinder.
Once these estimates are established, we use the arguments in \cite{CKK}
to deduce generalized Mosco convergence, and then obtain the
weak convergence (Theorem \ref{t3-1}).

\section{Truncated $\alpha$-stable-like processes on graphs}\label{S:tr}
In this and the next sections, we fix graphs and discuss
$\alpha$-stable-like processes on them. Hence we do not consider
randomness of the environment. With a slight abuse of notation, we
still use $w_{x,y}$ as the deterministic version. Let $G=(V,E_V)$ be
a locally finite and connected graph, where $V$ is the set of
vertices, and $E_V$ the set of edges.
 For any $x\neq y \in V$, we write $\rho(x,y)$ for the graph distance, i.e., $\rho(x,y)$ is the length of the shortest path (that is, a sequence $x_0=x, x_1,\cdots,x_l=y$ such that $(x_i,x_{i+1})\in E_V$ for all $0\le i\le l-1$) joining $x$ and $y$.
  Set $\rho(x,x)=0$ for all $x\in V$.
  We let $B(x,r)=\{y\in V:\rho(x,y)\le  r\}$
denote the ball in the graph metric with center $x\in V$ and radius $r>0$. Let $\mu$ be a measure on $V$ such that $\mu_x:=\mu(\{x\})$ satisfies for some constant $c_M\ge1$ that
\begin{equation}\label{al2-0}
c_M^{-1}\le \mu_x\le c_M,\quad x\in V.
\end{equation}
For each $p\in[1,\infty)$, let $L^p(V;\mu)=\{f\in \R^V:\sum_{x\in V}|f(x)|^p\mu_x<\infty\}$, and
 denote by $\|f\|_p$ the $L^p$ norm of $f$ with respect to $\mu$. Let $L^\infty(V;\mu)$ be the space of bounded measurable functions on $V$, and let $\|f\|_\infty$ be the $L^\infty$ norm of $f$.
We assume that $(G,\mu)$ satisfies the $d$-set condition with $d>0$, i.e., there exist $r_{G}\in [1,\infty]$ and $c_G\ge1$ such that
\begin{equation}\label{al2-1}
c_G^{-1}r^d\le \mu(B(x,r))\le c_Gr^d,\quad x\in V,1\le r<r_G.
\end{equation}
For example, if $V$ is a subset of $\mathbb{Z}^d$, then we can take $r_G=1+{\rm diam}{V}$,
where ${\rm diam}V$ denotes the
diameter of $V$. In particular, when $V$ is bounded, $r_G<+\infty$
-- see Section \ref{bddLipd}.
Throughout the paper, all the constants appearing
in the statements of lemmas, propositions and theorems are independent of $r_G$.
We will also not stress the dependence on $c_G$ and $c_M$ above for these constants.

We consider the operator
$Lf(x)=\sum_{z\in V}(f(z)-f(x))\frac{w_{x,z}}{\rho(x,z)^{d+\alpha}}\mu_z$ and the quadratic form
\begin{align*}
D(f,f)&=\frac{1}{2}\sum_{x,y\in V}(f(x)-f(y))^2\frac{w_{x,y}}{\rho(x,y)^{d+\alpha}}\mu_x\mu_y,\quad f\in \F=\{f\in L^2(V;\mu): D(f,f)<\infty\},
\end{align*}
where $\alpha\in (0,2)$ and $\{w_{x,y}:x,y\in V\}$ is a sequence
such that $w_{x,x}=0$ for all $x\in V$,
$w_{x,y}\ge0$ and $w_{x,y}=w_{y,x}$ for all $x\neq y$, and
\begin{equation}\label{eq:condwxy}
\sum_{y \in V}\frac{w_{x,y}}
{\rho(x,y)^{d+\alpha}}
\mu_y<\infty,\quad x\in V.
\end{equation}
 Here by convention we set $0/0=0$.
According to (the first statement in) \cite[Theorem 3.2]{CKK},
$(D,\F)$ is a regular symmetric Dirichlet form on $L^2(V;\mu)$. Let
$X:=(X_t)_{t\ge0}$ be the symmetric Hunt process associated with
$(D,\F)$. Set $C_{x,y}:=w_{x,y}/\rho(x,y)^{d+\alpha}$. Under
$\Pp^x$, $X_0=x$, and the process $X$ waits for an exponentially distributed random
time of parameter $C_{x}:=\sum_{y\in V}C_{x,y}\mu_y$ and jumps to
point $y\in V$ with probability $C_{x,y}\mu_y/C_x$; this procedure
is then iterated choosing independent hopping times. Such a Markov
process $X$ is called a variable speed $\alpha$-stable-like random walk on $V$
-- see Section \ref{constSRW} concerning a constant speed $\alpha$-stable-like random walk.
We write $p(t,x,y)$ for the heat kernel of $X$
on $V$; that is, the transition density of the process $X$ with
respect to $\mu$ which is defined by
$p(t,x,y)=\mu_y^{-1}\Pp^x(X_t=y).$

The goal of this section is to study moment estimates for truncated
$\alpha$-stable-like processes on graphs, which are crucial to
obtain probability estimates for the exit time of the (original)
process. For this, we first consider on-diagonal upper
bounds for heat kernels of
the truncated process in Subsection
\ref{Subsection2.1}. Then, by adopting the localization method and
using the idea of Bass-Nash,
in Subsection \ref{Subsection2.2} we
establish  moment estimates for the truncation of the localized
$\alpha$-stable-like process on graphs.

\subsection{On-diagonal upper
bounds
for heat kernel}\label{Subsection2.1}
 In this subsection, we are concerned
with the truncated Dirichlet form corresponding to $(D,\F)$. For
fixed $1\le \delta<r_G$, define the operator $L^{\delta}f(x)=\sum_{z
\in V:\rho(z,x)\le
\delta}\big(f(z)-f(x)\big)\frac{w_{z,x}}{\rho(z,x)^{d+\alpha}}\mu_z.$
Then, the associated bilinear form is given by
$$
D^{\delta}(f,f)=\frac{1}{2}\sum_{x,y\in V: \rho(x,y)\le
\delta}\big(f(x)-f(y)\big)^2\frac{w_{x,y}}{\rho(x,y)^{d+\alpha}}\mu_x\mu_y.$$ Throughout this part, we always assume that
\begin{equation}\label{nl2-1-0} C_{V,\delta}:=\sup_{x\in V}\sum_{y\in V: \rho(x,y)>\delta}\frac{w_{x,y}}{\rho(x,y)^{d+\alpha}}\mu_y<\infty.\end{equation}
By \eqref{nl2-1-0} and the symmetry of $w_{x,y}$, we can easily see that for all $f\in \F$,
\begin{align*}D^{\delta}(f,f)\!\le  D(f,f)\!\le  D^\delta (f,f)\!+2\sum_{x\in V} f(x)^2\mu_x\sum_{y\in V:\rho(y,x)>\delta} \frac{w_{x,y}}{\rho(x,y)^{d+\alpha}}\mu_y
\!\le D^{\delta}(f,f)\!+2C_{V,\delta}\|f\|_2^2. \end{align*}
Consequently,
$(D^{\delta},\F)$ is also
a regular and symmetric Dirichlet form
on $L^2(V;\mu)$. Denote by $X^{\delta}:=\big((X_t^{\delta})_{t\ge 0}, (\Pp_x)_{x\in V}\big)$  the associated Hunt process, which is called the truncated process associated with $X$ in the literature.

In the following, we denote by $p^{\delta}(t,x,y)$ the heat kernel
of $X^{\delta}$. Given a sequence of
$w:=\{w_{x,y}:x,y\in V\}$, we set
$B^w(x,r):=\{z\in B(x,r): w_{x,z}>0\}$ for every $x\in V$ and $r\ge 1$.
The main statement in this part is as follows.
\begin{proposition}\label{np2-1}
Suppose  that \eqref{nl2-1-0} holds for some $\delta>0$,  and that
there exist  constants $\theta \in (0,1)$ and $C_1, C_2>0$
such that for every
$\delta^{\theta}\le r \le \delta$,
\begin{equation}\label{nl2-1-1}
\sup_{x\in V}\sum_{y \in B^w(x,r)}w_{x,y}^{-1}\le C_1r^{d},
\end{equation}
\begin{equation}\label{np2-1-1a}
\inf_{x\in V}\mu\big(B^w(x,r)\big)\ge C_2r^d
\end{equation}
and
\begin{equation}\label{np2-1-1}
\sup_{x \in V}\sum_{y \in V: \rho(y,x)\le
r}\frac{w_{x,y}}{\rho(x,y)^{d+\alpha-2}}\le C_1r^{2-\alpha}.
\end{equation} Then, for each $\theta'\in (\theta,1)$,
there are constants $\delta_0>0$
$($depending only
on $\theta'$, $\theta$$)$ and $C_3>0$
such that when the constant $\delta>0$ above satisfies $\delta_0\le \delta<r_G$,
the following estimate holds
\begin{equation}\label{np2-1-2}
 p^{\delta}(t,x,y)\le C_3t^{-d/\alpha},\quad \forall\, 2\delta^{\theta' \alpha}\le t \le \delta^{\alpha} \textrm{ and }x,y\in V.
 \end{equation}
\end{proposition}

In order to get on-diagonal upper
bounds
for the heat kernel of the truncated process
$X^\delta$, we need the following scaled Poincar\'{e}-type
inequality.
We note that the inequality \eqref{nl2-1-2a} below is different from the standard
Poincar\'e inequality since here the term $(f)_{B(x,r_0)}$ (for the standard Poincar\'e inequality) is replaced by $(f)_{B^w(z,r_0)}$, which will be
adopted to deal with the case that $w_{x,y}$ is degenerate.

\begin{lemma}\label{nl2-1}
Suppose that there exist
constants $C_1,C_2>0$ and $1\le r_0<r_G$
such that
\begin{equation}\label{nl2-1-1a}
\sup_{x \in V}\sum_{y\in B^w(x,r_0)}w_{x,y}^{-1}\le C_1
r_0^{d}
\end{equation} and
\begin{equation}\label{nl2-1-1b}
\inf_{x\in V}\mu(B^w(x,r_0))\ge C_2r_0^d.
\end{equation}
Then there is a constant $C_3>0$,
independent of $r_0$,
such that for all $x\in
V$ and measurable function $f$ on $V$,
\begin{equation}\label{nl2-1-2a}
\begin{split}
\sum_{z\in B(x,r_0)}\!\!\!(f(z)-&(f)_{B^w(z,r_0)})^2\mu_z
\le C_3 r_0^{\alpha}\!\!\!
\sum_{y,z\in V: z\in B(x,r_0), y\in B(z,r_0)}
\!\!\!\!(f(z)-f(y))^2 \frac{w_{z,y}}{\rho(z,y)^{d+\alpha}}\mu_z\mu_y,\end{split}
\end{equation}
where for $A\subset V$,
$(f)_A:={\mu(A)}^{-1}\sum_{z\in A}f(x)\mu_z.$
\end{lemma}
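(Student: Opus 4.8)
The plan is to derive the scaled Poincar\'e inequality \eqref{nl2-1-2a} from the classical (unweighted, unscaled) $L^2$-Poincar\'e inequality on the ball $B(x,r_0)$ by a two-step argument: first pass from the average $(f)_{B^w(z,r_0)}$ over the small set $B^w(z,r_0)$ to the average $(f)_{B(x,r_0)}$ over the big ball, and then control $\sum_{z\in B(x,r_0)}(f(z)-(f)_{B(x,r_0)})^2\mu_z$ by the right-hand side. For the first step, fix $z\in B(x,r_0)$. Writing $a=(f)_{B^w(z,r_0)}$ and $b=(f)_{B(x,r_0)}$ and using $(f(z)-a)^2\le 2(f(z)-b)^2+2(b-a)^2$, it suffices to bound $(b-a)^2$. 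By Jensen applied to the measure $\mu$ restricted to $B^w(z,r_0)$ (whose total mass is $\ge C_2 r_0^d$ by \eqref{nl2-1-1b}),
\[
(b-a)^2=\Big(\frac{1}{\mu(B^w(z,r_0))}\sum_{u\in B^w(z,r_0)}(f(u)-b)\mu_u\Big)^2\le \frac{1}{C_2 r_0^d}\sum_{u\in B(x,2r_0)}(f(u)-b)^2\mu_u,
\]
since $B^w(z,r_0)\subset B(z,r_0)\subset B(x,2r_0)$ for $z\in B(x,r_0)$. Summing over $z\in B(x,r_0)$ and using $\mu(B(x,r_0))\le c_G r_0^d$ from the $d$-set condition \eqref{al2-1}, this contributes a term bounded by $C\sum_{u\in B(x,2r_0)}(f(u)-b)^2\mu_u$, i.e.\ the same shape as the quantity we want to estimate (up to enlarging the ball from $r_0$ to $2r_0$ and adjusting $b$ to $(f)_{B(x,2r_0)}$, which costs another factor by the same Jensen trick). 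So everything reduces to the \textbf{unweighted $L^2$-Poincar\'e inequality on balls}: for all $x\in V$ and $1\le r<r_G$,
\[
\sum_{z\in B(x,r)}(f(z)-(f)_{B(x,r)})^2\mu_z\le C\, r^{d+\alpha}\!\!\!\sum_{z,y\in B(x,2r)}\!\!\!\frac{(f(z)-f(y))^2}{\rho(z,y)^{d+\alpha}}\mu_z\mu_y .
\]

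Now the key point: the inner double sum on the right has \emph{no} weights $w_{z,y}$ — but \eqref{nl2-1-2a} does. To bridge this gap I would use the hypotheses \eqref{nl2-1-1a}--\eqref{nl2-1-1b} not just for balls of radius $r_0$ but along a chain. Cover $B(x,r_0)$ by finitely many (uniformly bounded number of) sub-balls $B(z_i,1)$ of radius one; on each such unit ball, $B^w(z_i,1)$ is non-empty with $\mu(B^w(z_i,1))\ge C_2$ by \eqref{nl2-1-1b}, so any two vertices $z,y$ in the same (or in neighbouring) unit balls can be joined through a common point $u\in B^w(z,1)\cap\cdots$; by \eqref{nl2-1-1a}, $w_{z,u}\ge C_1^{-1}r_0^{-d}$ for at least one such $u$, hence the \emph{unweighted} jump term $(f(z)-f(u))^2$ is controlled by $C r_0^d\rho(z,u)^{d+\alpha} \cdot \frac{w_{z,u}}{\rho(z,u)^{d+\alpha}}(f(z)-f(u))^2$, i.e.\ by a constant multiple of $r_0^d$ times a \emph{weighted} jump term appearing on the right of \eqref{nl2-1-2a}. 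Chaining finitely many such steps (the number of steps is uniformly bounded because within $B(x,r_0)$ the graph distance between any two unit sub-balls is $\le C r_0$, but one only needs nearest-neighbour hops among the $O(1)$ sub-balls after the first averaging), one converts $\sum_{z,y\in B(x,2r_0)}(f(z)-f(y))^2$ (with no decay factor, since $\rho(z,y)^{d+\alpha}\le (4r_0)^{d+\alpha}$ anyway) into $C r_0^{d}\cdot\big(\text{weighted sum}\big)$. Combining with the unweighted Poincar\'e inequality above, whose prefactor $r^{d+\alpha}$ times the conversion factor $r_0^{-d}$ in the right direction yields exactly the claimed $r_0^{\alpha}$.

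The \textbf{main obstacle} is the unweighted $L^2$-Poincar\'e inequality on the graph ball $B(x,r)$ with the correct scaling $r^{d+\alpha}$ and with the double sum taken over the slightly larger ball $B(x,2r)$. On $\Z^d$ (or any $d$-set admitting a nice nearest-neighbour structure) this is classical and follows by writing $f(z)-f(y)$ as a telescoping sum along a geodesic of length $\rho(z,y)$, applying Cauchy--Schwarz (which produces the factor $\rho(z,y)$), and then re-summing — the long-range kernel $\rho(z,y)^{-d-\alpha}$ with $\alpha<2$ being more than enough for the geometric series to converge after multiplying back by $r^{d+\alpha}$; one must be careful that the telescoping path stays inside $B(x,2r)$, which is why the right-hand side is written over the doubled ball. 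A secondary nuisance is bookkeeping the constants to confirm they depend only on $d,\alpha,c_M,c_G,C_1,C_2$ and not on $r_0$ or $r_G$; this is ensured because every estimate above is scale-covariant and uses only the $d$-set bounds \eqref{al2-0}--\eqref{al2-1} and \eqref{nl2-1-1a}--\eqref{nl2-1-1b} uniformly. I would organise the write-up as: (1) state and prove the unweighted Poincar\'e inequality on balls; (2) the Jensen/averaging step replacing $(f)_{B^w(z,r_0)}$ by $(f)_{B(x,2r_0)}$; (3) the weight-insertion (chaining) step turning the unweighted double sum into the weighted one; (4) assemble.
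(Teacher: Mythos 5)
Your proposal diverges substantially from the paper's argument, and the divergence creates a genuine gap. The paper proves the lemma in one short block: expand $(f)_{B^w(z,r_0)}$ as $\mu(B^w(z,r_0))^{-1}\sum_{y\in B^w(z,r_0)}(f(z)-f(y))\mu_y$, apply Cauchy--Schwarz after splitting $(f(z)-f(y))\mu_y$ into $\bigl[(f(z)-f(y))\,w_{z,y}^{1/2}\rho(z,y)^{-(d+\alpha)/2}\bigr]\cdot\bigl[w_{z,y}^{-1/2}\rho(z,y)^{(d+\alpha)/2}\mu_y\bigr]$, use $\rho(z,y)\le r_0$, plug in \eqref{nl2-1-1a} for $\sum_y w_{z,y}^{-1}$ and \eqref{nl2-1-1b} for the normalizing factor, then sum over $z$. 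This works precisely because the average on the left-hand side is taken over $B^w(z,r_0)$, so every pair $(z,y)$ that appears satisfies $w_{z,y}>0$ and the weight can be inserted and removed in a single Cauchy--Schwarz. The statement is written in this form exactly to make the proof this easy; there is no need for an unweighted Poincar\'e inequality, no replacement of $(f)_{B^w(z,r_0)}$ by $(f)_{B(x,r_0)}$, and no chaining.

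Your plan reverses this: you first pass to the average over the big ball $B(x,r_0)$, thereby discarding the key structural feature, and then you must re-insert the weights $w_{z,y}$ into a double sum running over \emph{all} pairs in $B(x,2r_0)$, including pairs with $w_{z,y}=0$. The chaining step you describe for this does not go through from the stated hypotheses. Concretely: (a) \eqref{nl2-1-1a}--\eqref{nl2-1-1b} are assumed only at the single scale $r_0$, not for unit balls $B(z_i,1)$, so the existence of ``good'' edges at scale $1$ is not available; (b) the number of unit sub-balls covering $B(x,r_0)$ is $\asymp r_0^{d}$, not $O(1)$, and the graph distance between two of them can be $\asymp r_0$, so a nearest-neighbour chain has unboundedly many hops as $r_0\to\infty$, which would produce extra powers of $r_0$ that destroy the scaling; (c) from $\sum_{y\in B^w(z,r_0)}w_{z,y}^{-1}\le C_1r_0^d$ one can only extract that a \emph{positive proportion} of $y\in B^w(z,r_0)$ have $w_{z,y}$ bounded below, not that any desired intermediate vertex does, and in particular there is no guarantee of a common intermediate $u$ for which both $w_{z,u}$ and $w_{u,y}$ are bounded below when $z,y$ are at opposite ends of $B(x,r_0)$. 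Finally, a side issue: your stated unweighted Poincar\'e inequality should carry a factor $r^{\alpha}$, not $r^{d+\alpha}$ (Jensen gives $\mu(B(x,r))^{-1}\asymp r^{-d}$ against $\rho(z,y)^{d+\alpha}\le (2r)^{d+\alpha}$, giving $r^{\alpha}$); your bookkeeping relies on the wrong exponent to cancel the (unjustified) chaining cost. I would discard the reduction to the big-ball average and adopt the direct Cauchy--Schwarz computation, which is both shorter and actually uses the hypotheses exactly as stated.
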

\begin{proof}
For every $x\in V$ and measurable
function $f$ on $V$, we have
\begin{align*}
&\sum_{z\in B(x,r_0)}(f(z)-(f)_{B^w(z,r_0)})^2\mu_z=\sum_{z\in B(x,r_0)}\Big(\frac{1}{\mu(B^w(z,r_0))}
\sum_{y\in B^w(z,r_0)}(f(z)-f(y))\mu_y\Big)^2\mu_z\\
&\le \frac{c_1}{r_0^{2d}}\sum_{z\in B(x,r_0)}
\bigg[\Big(\sum_{y\in B^w(z,r_0)}(f(z)-f(y))^2 \frac{w_{z,y}}{\rho(z,y)^{d+\alpha}}\Big)
\Big(\sum_{y\in B^w(z,r_0)}w_{z,y}^{-1}\rho(z,y)^{d+\alpha}\Big)\bigg] \\
&\le c_2r_0^{-d+\alpha}\Big(\sup_{z \in V} \sum_{y\in B^w(z,r_0)}w_{z,y}^{-1}\Big)\Big(
\sum_{y,z\in V: z\in B(x,r_0), y\in B(z,r_0)}
\big(f(z)-f(y)\big)^2\frac{w_{z,y}}{\rho(z,y)^{d+\alpha}}\Big)\\
&\le c_3r_0^{\alpha}
\sum_{y,z\in V: z\in B(x,r_0), y\in B(z,r_0)}
\big(f(z)-f(y)\big)^2\frac{w_{z,y}}{
\rho(z,y)^{d+\alpha}}\mu_z\mu_y,
 \end{align*}
 where the first inequality follows from \eqref{al2-0}, \eqref{nl2-1-1b} and the Cauchy-Schwarz inequality, in the second inequality we have
 used the fact that
 $\rho(z,y)\le r_0$ for every $y\in B^w(z,r_0)$, and the third
 inequality is due to \eqref{al2-0} and \eqref{nl2-1-1a}.
 This proves \eqref{nl2-1-2a}.
\end{proof}

We are now in a position to present the

 \begin{proof}[Proof of Proposition $\ref{np2-1}$]
Without mention, throughout the proof constant $c_i$ will be
independent of $\delta$, $t$, $x$, $y$ and $r_G$. Since, by the
Cauchy-Schwarz inequality, $$p^{\delta}(t,x,y)\le
p^{\delta}(t,x,x)^{1/2}p^{\delta}(t,y,y)^{1/2}$$ for any $t>0$ and
$x,y\in V,$ it suffices to verify \eqref{np2-1-2} for the case that
$x=y$. The proof is split into three steps.

{\bf Step (1):} We first note that under \eqref{nl2-1-0} and \eqref{np2-1-1},
$\sup_{x\in V}\sum_{y\in V}\frac{w_{x,y}}{\rho(x,y)^{d+\alpha}}\mu_y<\infty.$ This along with (the second statement in) \cite[Theorem 3.2]{CKK} yields that the process $X^\delta$ is conservative. By \cite[Proposition 5 and Theorem 8]{D}, we have the following upper bound
for $p^{\delta}(t,x,y)$:
\begin{equation}\label{np2-1-3}
\begin{split}
p^{\delta}(t,x_1,x_2)&\le \mu_{x_1}^{-1/2}\mu_{x_2}^{-1/2}\inf_{\phi\in L^{\infty}(V;\mu)}
\exp\big(\phi(x_1)-\phi(x_2)
+b(\phi)t\big)
\end{split}
\end{equation} for all $t>0$ and $x_1,x_2\in V,$
where
$$
b(\phi):=\frac{1}{2}\sup_{x \in V}\sum_{y \in V: \rho(y,x)\le \delta}
\frac{w_{x,y}}{\rho(x,y)^{d+\alpha}}\Big(e^{\phi(y)-\phi(x)}+
e^{\phi(x)-\phi(y)}-2\Big)\mu_y.
$$

For fixed $x_1,x_2\in V$, taking $\phi(x)=\rho(x,x_1)\wedge
\rho(x_1,x_2)$ for any $x\in V$, we get that
\begin{align*}
b(\phi)&\le \frac{1}{2}\sup_{x \in V}\sum_{y \in V:\rho(y,x)\le
\delta}
\frac{w_{x,y}}{\rho(x,y)^{d+\alpha}}\Big(e^{\rho(x,y)}+e^{-\rho(x,y)}-2\Big)\mu_y\\
&\le \frac{1}{2}\sup_{x \in V}\sum_{y \in V:\rho(y,x)\le \delta}
\frac{w_{x,y}}{\rho(y,x)^{d+\alpha}}\rho(x,y)^2e^{\rho(x,y)}\mu_y\\
&\le \frac{1}{2}e^{\delta}\sup_{x \in V}\sum_{y\in V:\rho(y,x)\le
\delta}
\frac{w_{x,y}}{\rho(x,y)^{d+\alpha-2}}\mu_y\le c_1e^{\delta}\delta^{2-\alpha}\le 2c_1e^{2\delta},
\end{align*}
where in the first inequality above we have used the facts that
$s\mapsto e^s+e^{-s}$ is increasing on $[0,\infty)$ and
$|\phi(x)-\phi(y)|\le \rho(x,y)$ for all $x,y\in V$, the second inequality is due to the fact that
$e^{s}+e^{-s}-2\le s^2 e^s$ for all $s\ge 0$, and the fourth
inequality follows from \eqref{np2-1-1}. Combining this with
\eqref{np2-1-3}, we arrive at the statement that for all $t>0$ and $x_1,x_2\in
V$,
\begin{equation}\label{np2-1-3a}
p^{\delta}(t,x_1,x_2)\le c_M\exp\big(-\rho(x_1,x_2)+2c_1e^{2\delta }t\big).
\end{equation}

Furthermore, it follows from the symmetry of $w_{x,y}$, the fact that
$p^\delta(t,x,y)\mu_y\le 1$ for all $t>0$ and $x,y\in V$, \eqref{np2-1-1} and \eqref{np2-1-3a} that for
every $x\in V$,
\begin{align*}
&\sum_{z,v\in V:\rho(z,v)\le \delta}
\big(p^{\delta}(t,x,z)-p^{\delta}(t,x,v)\big)^2
\frac{w_{z,v}}{\rho(z,v)^{d+\alpha}}\mu_z\mu_v\\
&\le \sum_{z,v\in V:\rho(z,v)\le \delta}
\big(p^{\delta}(t,x,z)+p^{\delta}(t,x,v)\big)^2
\frac{w_{z,v}}{\rho(z,v)^{d+\alpha}}\mu_z\mu_v\\
&\le 4c_M\sum_{z \in V}p^{\delta}(t,x,z)\Big( \sup_{z\in
V}\sum_{v\in V:\rho(v,z)\le \delta}
\frac{w_{z,v}}{\rho(z,v)^{d+\alpha}}\Big)\\
&\le 4c_M\sum_{z \in V}p^{\delta}(t,x,z)\Big( \sup_{z\in
V}\sum_{v\in V:\rho(z,v)\le \delta}
\frac{w_{z,v}}{\rho(z,v)^{d+\alpha-2}}\Big)\le c_2(\delta,t)\sum_{z \in V}\exp(-\rho(z,x))<\infty,
\end{align*}
where in the last inequality we used the fact that
\begin{align*}
\sum_{z \in V}\exp(-\rho(z,x))&\le c_M\sum_{r=0}^{\infty}\sum_{z \in V: \rho(x,z)=r}
e^{-r}\mu_z\le c_M\sum_{r=0}^{\infty}\mu(B(x,r))e^{-r}\le c_M c_G\sum_{r=1}^{\infty}r^d e^{-r}<\infty.
\end{align*}
 Therefore, according to the
Fubini theorem and \eqref{np2-1-3a},
for every $x \in V$,
\begin{equation}\label{np2-1-4}
\begin{split}
&\sum_{z\in V}L^{\delta}p^{\delta}(t,x,\cdot)(z)p^{\delta}(t,x,z)\mu_z
=-\frac{1}{2}\sum_{z,v\in V:\rho(z,v)\le \delta}
\big(p^{\delta}(t,x,z)-p^{\delta}(t,x,v)\big)^2\frac{w_{z,v}}{\rho(z,v)^{d+\alpha}}\mu_z\mu_v.
\end{split}
\end{equation}

{\bf Step (2):} Below we fix $x\in V$. Let $f_t(z)=p^{\delta}(t,x,z)$ and
$\psi(t)=p^{\delta}(2t,x,x)$ for all $z\in V$ and $t\ge 0$. Then,
 $\psi(t)=\sum_{z\in V} f_t(z)^2\mu_z$, and, by \eqref{np2-1-4},
$$
 \psi'(t)= \!2\sum_{z\in V} \!\frac{d f_t(z)}{dt} f_t(z)\mu_z=\!2\sum_{z \in V}\!
 L^{\delta}f_t(z) f_t(z)\mu_z=\!-\sum_{z,y\in V:\rho(z,y)\le \delta}
 \!(f_t(z)-f_t(y))^2
 \frac{w_{z,y}}{\rho(z,y)^{d+\alpha}}\mu_z\mu_y.
$$

Let $\delta^{\theta}\le r(t)\le \delta$ and $R:=R(\delta)\ge 1$ be some
constants to be determined later. Suppose that $B(x_i, r(t)/2)$
($i=1,\cdots, m$) is a maximal collection of disjoint balls with
centers in $B(x,R)$. Set $B_i=B(x_i,r(t))$ and $B_i^*=B(x_i,
2r(t))$. Then, $B(x,R)\subset \cup_{i=1}^mB_i\subset
B(x,R+r(t))\subset \cup_{i=1}^mB_i^*.$ If
$z\in
B(x,R+r(t))\cap B_i^*$ for some $1\le i\le m$, then $B(x_i,r(t)/2)\subset B(z,3r(t))$, and
so
$$c_3r(t)^d\ge \mu(B(z,3r(t)))\ge \sum_{i=1}^m\I_{\{z\in B_i^*\}} \mu(B(x_i,r(t)/2))\ge c_4r(t)^d|\{i:z\in B_i^*\}|.$$In the second inequality we used the fact that $B(x_i, r(t)/2)$,
$i=1,\cdots, m$, are disjoint, and in the first and the last inequality we have used
\eqref{al2-1}.  Thus, every $z\in B(x,R+r(t))$ is in at most
$c_5:=c_3/c_4$ of the balls
$B_i^*$ (hence at most $c_5$ of the balls
$B_i$). In particular,
\begin{equation}\label{np2-1-4a}
\sum_{i=1}^m\sum_{z\in B_i}=\sum_{i=1}^m\sum_{z\in
B(x,R+r(t))}\I_{B_i}(z) =\sum_{z\in
B(x,R+r(t))}\sum_{i=1}^m\I_{B_i}(z) \le  c_5\sum_{z \in
B(x,R+r(t))}.
\end{equation}
According to (the proof of) Lemma \ref{nl2-1}, \eqref{nl2-1-1} and \eqref{np2-1-1a} imply that for every $\delta^{\theta}\le r \le \delta$,
$x\in V$ and measurable function $f$ on $V$,
\begin{equation}\label{nl2-1-2}
\begin{split}
\sum_{z\in B(x,r)}\!\!(f(z)-(f)_{B^w(z,r)})^2\mu_z\le c_6 r^{\alpha}\!\!
\sum_{z,y\in V:z\in B(x,r),y\in B(z,r)}
\!\!(f(z)-f(y))^2 \frac{w_{z,y}}{\rho(z,y)^{d+\alpha}}\mu_z\mu_y.\end{split}
\end{equation}
Hence, noticing that $\delta^\theta\le r(t)\le \delta$,
\begin{align*}
&\sum_{z,y\in V:\rho(z,y)\le \delta} (f_t(z)-f_t(y))^2\frac{w_{z,y}}{\rho(z,y)^{d+\alpha}}\mu_z\mu_y\ge \frac{1}{c_5}\sum_{i=1}^m\sum_{z\in B_i}
\sum_{y\in B(z,r(t))}
(f_t(z)-f_t(y))^2\frac{w_{z,y}}{\rho(z,y)^{d+\alpha}}
\mu_z\mu_y\\
&\ge \frac{c_7}{r(t)^{\alpha}}\Big[\sum_{i=1}^m \sum_{z\in B_i}
f_t^2(z)\mu_z-
2\sum_{i=1}^m \sum_{z\in B_i}f_t(z)(f_t)_{B^w(z,r(t))}\mu_z\Big]=: \frac{c_7}{r(t)^{\alpha}}(I_1-I_2), \end{align*}
where in the second inequality we have used \eqref{nl2-1-2}.

Furthermore, since $f_t(z)\mu_z\le 1$ for all $z\in V$ and $t>0$, we have
\begin{align*}
I_1\ge &\!\sum_{z\in \cup_{i=1}^m B_i} f_t^2(z)\mu_z\ge\!\sum_{z\in B(x,R)}
f_t^2(z)\mu_z
=\!\psi(t)\!-\!\sum_{z\in V: \rho(z,x)> R}f_t^2(z)\mu_z\ge\!  \psi(t)\!-\!\sum_{z\in V:\rho(z,x)> R}f_t(z).
\end{align*}
So, by \eqref{np2-1-3a}, we can choose
$R:=R(\delta)=2c_1e^{4\delta}$ such that for all
$\delta^{\theta\alpha}\le t\le \delta^{\alpha}$,
\begin{align*}
\sum_{z\in V: \rho(z,x)> R}f_t(z)&\le
\sum_{z\in V:\rho(z,x)> 2c_1e^{4\delta}}\exp\big(-\rho(z,x)+2c_1e^{2\delta}\delta^\alpha\big)\\
&\le c_M\sum_{z\in V:\rho(z,x)>2c_1
e^{4\delta}}\exp\big(-\rho(z,x)/2\big)\mu_z\\
&\le c_M\sum_{r=2c_1e^{4\delta}}^\infty\mu(B(x,r))e^{-r/2}\le c_8\delta^{-d}\le
c_8r(t)^{-d},
\end{align*}
where the last inequality follows from the fact that $r(t)\le \delta$.
 On the other hand, due to \eqref{np2-1-1a} and the fact that $\sum_{z \in V}f_t(z)\mu_z\le 1$ for all $t>0$,
\begin{equation*}
\sup_{z \in V}(f_t)_{B^w(z,r(t))}\le
\sup_{z\in V}\mu\big(B^w(z,r(t))\big)^{-1}\cdot \sum_{z\in V}f_t(z)\mu_z\le C_2^{-1}r(t)^{-d}.
\end{equation*}
This along with \eqref{np2-1-4a} yields that
 \begin{align*}
 I_2&\le C_2^{-1}r(t)^{-d}\sum_{i=1}^m\sum_{z \in B_i} f_t(z)\mu_z
 \le C_2^{-1}c_5r(t)^{-d}\sum_{z \in B(x,R+r(t))}f_t(z)\mu_z\le C_2^{-1} c_5r(t)^{-d}.
 \end{align*}
 Therefore, combining all estimates above,
 we arrive at the statement
 that for every $\delta^{\theta}\le r(t)\le \delta$,
\begin{equation}\label{np2-1-5}
\psi'(t)\le -c_{9}r(t)^{-\alpha}\left(\psi(t)-c_{10}r(t)^{-d}\right).
\end{equation}

{\bf Step (3):} For any $\theta'\in (\theta,1)$ and any $1\le \delta<r_G$ large enough, we claim that there exists
$t_0\in [\delta^{\theta\alpha},\delta^{\theta'\alpha}]$ such that
\begin{equation}\label{np2-1-6}
\left( \frac{1}{2c_{10}}\psi(t_0)\right)^{-1/d}\ge \delta^{\theta}.
\end{equation}
Indeed, suppose that \eqref{np2-1-6} does not hold. Then,
\begin{equation}\label{np2-1-7}
\left( \frac{1}{2c_{10}}\psi(t)\right)^{-1/d}< \delta^{\theta},\quad \forall\ \delta^{\theta\alpha}\le t\le \delta^{\theta'\alpha},
\end{equation}
which means that $\psi(t)\ge 2c_{10}\delta^{-d\theta}$ for all $\delta^{\theta\alpha}\le t\le \delta^{\theta'\alpha}$.
Hence, taking $r(t)=\delta^{\theta}$ in \eqref{np2-1-5}, we find that
$\psi'(t)\le -{2}^{-1}c_{9}\delta^{-\theta\alpha} \psi(t)$ for any $\delta^{\theta\alpha}\le t\le \delta^{\theta'\alpha},$
which along with the fact $\psi(t)\le \mu_x^{-1}\le c_M$ for all $t>0$ yields
that
$\psi(t)\le  c_M e^{-2^{-1}{c_{9}}\delta^{-\theta\alpha}(t-\delta^{\theta\alpha})}$ for any $\delta^{\theta\alpha}\le t\le \delta^{\theta'\alpha}.$
In particular,
$\psi(\delta^{\theta'\alpha})\le c_Me^{-2^{-1}{c_{9}}\delta^{-\theta\alpha}(\delta^{\theta'\alpha}-\delta^{\theta\alpha})}.$
On the other hand, according to \eqref{np2-1-7}, we have
$\psi(\delta^{\theta'\alpha})\ge
2c_{10} \delta^{-d\theta}.$
Thus, there is a contradiction between these two inequalities
for $\delta$ large enough, and so \eqref{np2-1-6} is true.

Next, assume we take $1\le \delta<r_G$ large enough such that \eqref{np2-1-6} holds. Since $t\mapsto\psi(t)$ is non-increasing on $(0,\infty)$
and $t_0\le \delta^{\theta'\alpha}$,
$$
\left( \frac{1}{2c_{10}}\psi(t)\right)^{-1/d}\ge \delta^{\theta},\quad \forall\ \delta^{\theta'\alpha}\le t\le \delta^{\alpha}.
$$
Let $$\tilde{t}_0:=\sup\bigg\{t>0: \left(
\frac{1}{2c_{10}}\psi(t)\right)^{-1/d}<\delta/2\bigg\}.$$ By the
non-increasing property of $\psi$ on $(0,\infty)$ again, if $\tilde t_0\le
\delta^{\theta'\alpha}$, then
$
\psi(t)\le \psi(\tilde t_0)=2c_{10}(\delta/2)^{-d}\le
c_{11}t^{-d/\alpha}$ for any $\delta^{\theta'\alpha}\le t\le \delta^{\alpha}.$
 This proves \eqref{np2-1-2} under the assumption $t_0\le \delta^{\theta'\alpha}$.

When $\tilde t_0>\delta^{\theta'\alpha}$,
$$
\delta^{\theta}\le \left( \frac{1}{2c_{10}}\psi(t)\right)^{-1/d}\le  \delta/2
,\quad \forall\ \delta^{\theta'\alpha}\le t\le \tilde t_0.
$$
Then, taking $r(t)=\big( \frac{1}{2c_{10}}\psi(t)\big)^{-1/d}$ in
\eqref{np2-1-5}, we have
$\psi'(t)\le-c_{12}\psi(t)^{1+\alpha/d}$
for any $\delta^{\theta'\alpha}\le t\le \tilde t_0.
$
Hence,
$
\psi(s) \le
c_{13}\big(s-\delta^{\theta'\alpha}+\psi(\delta^{\theta'\alpha})^{-\alpha/d}\big)^{-d/\alpha}\le
c_{14}s^{-d/\alpha}$ for any $2\delta^{\theta'\alpha}\le s\le
\tilde t_0.$
If $\tilde t_0>\delta^{\alpha}$, then \eqref{np2-1-2} holds.
If $\delta^{\theta'\alpha}<\tilde t_0\le \delta^{\alpha}$, then, for all $\tilde t_0 \le s\le \delta^{\alpha}$,
$
\psi(s)\le \psi(\tilde t_0)=2c_{10}(\delta/2)^{-d}\le c_{15}s^{-d/\alpha},
$
so \eqref{np2-1-2} also holds. The proof is complete.
\end{proof}
\begin{remark}
By carefully tracking the proof, we can see that the constant $C_3>0$ in the statement of Proposition
$\ref{np2-1}$ can be chosen independently of the choice of $\delta_0,\delta>0$.
\end{remark}

\subsection{Localization method and moment estimates of the truncated process}\label{Subsection2.2} In this part, we fix $x_0\in V$ and $R\ge 1$. Define a
symmetric regular Dirichlet form $(\hat D^{x_0, R},\hat \F^{x_0,
R})$ as follows
\begin{align*}
\hat D^{x_0,R}(f,f)=&\sum_{x,y\in
V}\big(f(x)-f(y)\big)^2\frac{\hat w_{x,y}}{\rho(x,y)^{d+\alpha}}\mu_x\mu_y,\quad
f\in
\hat \F^{x_0,R},\\
\hat \F^{x_0,R}=&\{f \in L^2(V;\mu): \hat D^{x_0,
R}(f,f)<\infty\},
\end{align*}
where
\begin{equation*}
\hat w_{x,y}=
\begin{cases}
& w_{x,y},\ \ \ \text{if}\ x\in B(x_0, R)\ \text{or}\ y\in B(x_0, R),\\
& \,\, 1,\ \ \ \ \ \ \text{otherwise}.
\end{cases}
\end{equation*}
In particular, coefficients of the Dirichlet form $(\hat D^{x_0, R},\hat \F^{x_0,
R})$ outside $B(x_0,R)$ are uniformly bounded. This point is quite important
in the following arguments for the exit time estimates from $B(x_0,R)$.

Note that, according to the definition of $\hat w_{x,y}$, for any $x\in V$,
\begin{equation}\label{e:bound}
\begin{split}
&\sum_{y\in V}\frac{\hat w_{x,y}}{\rho(x,y)^{d+\alpha}}=\sum_{y \notin B(x_0, R)}\frac{\hat w_{x,y}}{\rho(x,y)^{d+\alpha}}+\sum_{y \in B(x_0, R)}\frac{w_{x,y}}{\rho(x,y)^{d+\alpha}}\\
&\le \sup_{z\in B(x_0, R)} \sum_{v\in V} \frac{w_{z,v}}{\rho(z,v)^{d+\alpha}}+
\sup_{z\notin B(x_0,R)}\sum_{y\in V:y\neq z}\frac{1}{\rho(z,y)^{d+\alpha}}
+\sum_{y \in B(x_0, R)}\frac{w_{x,y}}{\rho(x,y)^{d+\alpha}}\\
&\le  \sup_{z\in B(x_0, R)} \sum_{v\in V} \frac{w_{z,v}}{\rho(z,v)^{d+\alpha}}
+c_M\sup_{z\notin B(x_0,R)}\sum_{k=1}^{\infty}\sum_{y\in V: 2^{k-1}\le \rho(y,z)< 2^{k}}  \frac{1}{\rho(y,z)^{d+\alpha}}\mu_y\\
&\quad+\sum_{y \in B(x_0, R)}\bigg(\sup_{z \in B(x_0, R)}\sum_{v \in V}
\frac{w_{z,v}}{\rho(z,v)^{d+\alpha}}\bigg)\\
&\le  \sup_{z\in B(x_0, R)} \sum_{v\in V} \frac{w_{z,v}}{\rho(z,v)^{d+\alpha}}
\!+\!c_M c_G\sum_{k=1}^{\infty} \frac{2^{kd}}{2^{(k-1)(d+\alpha)}}
\!+\!\sum_{y \in B(x_0, R)}\sup_{z \in B(x_0, R)}\sum_{v \in V}
\frac{w_{z,v}}{\rho(z,v)^{d+\alpha}}\\
&\le  c_1+c_2(1+R^d)\sup_{z \in B(x_0, R)}\bigg(\sum_{v \in V}
\frac{w_{z,v}}{\rho(z,v)^{d+\alpha}}\bigg)=:C(x_0,R)<\infty,
\end{split}
\end{equation}
where \eqref{eq:condwxy} was used in the fourth inequality. In
particular, by \eqref{e:bound} and (the second statement in)
\cite[Theorem 3.2]{CKK}, the associated Hunt process $\hat X^{
R}:=((\hat X_t^{R})_{t\ge 0}, (\Pp_x)_{x\in V})$ is conservative.
Here and in what follows, we omit the index $x_0$ for simplicity.
The process $\hat X^{ R}$ is called the localized
$\alpha$-stable-like process (with parameters $x_0\in V$ and $R\ge1$).

We also consider the following
truncated Dirichlet form $(\hat D^{x_0,R, R}, \hat\F^{x_0,R})$:
\begin{align*}
&\hat D^{x_0, R, R}(f,f)=\sum_{x,y\in V: \rho(x,y)\le
R}\big(f(x)-f(y)\big)^2\frac{\hat w_{x,y}}{\rho(x,y)^{d+\alpha}}\mu_x\mu_y,\quad
f\in \hat\F^{x_0, R}.
\end{align*}
Let $\hat X^{R,R}:=((\hat X_t^{
R,R})_{t\ge 0},(\Pp_x)_{x\in V})$ be the associated Hunt process.
In particular, due to \eqref{e:bound} again, the process $\hat X^{R,R}$ is also conservative.
Denote by $\hat
p^{R}(t,x,y)$ and $\hat p^{
R,R}(t,x,y)$ heat kernels of the processes $\hat X^R$ and $\hat X^{R,R}$, respectively.

\ \

The following statement concerns moment estimates of $\hat
X^{R,R}$. These estimates are key inputs for exit time estimates for the original process $X$
in the next section.

\begin{proposition}\label{np-1}
Suppose that there exist $1\le R_0<r_G$, $\theta \in (0,1)$ and $C_1,C_2>0$
such that
for every $R_0<R<r_G$ and
$R^{\theta}\le r \le R$,
\begin{equation}\label{np1-1}
\sup_{x\in {B(x_0,3R)}}\sum_{y\in V:\rho(x,y)\le  r}
\frac{w_{x,y}}{\rho(x,y)^{d+\alpha-2}}
\le C_1 r^{2-\alpha},
\end{equation}
\begin{equation}\label{np1-1b}
\inf_{x\in B(x_0,3R)}\mu(B^w(x,r))\ge C_2r^d
\end{equation}
and
\begin{equation}\label{np1-1a}
\sup_{x \in { B(x_0,3R)}}\sum_{y\in B^w(x,r)}w_{x,y}^{-1}\le C_1r^d.
\end{equation}
Then for every $\theta' \in (\theta,1)$,
 there exist constants $R_1>R_0$
$($which depends only on $\theta$, $\theta'$ and $R_0)$
and $C_3>0$ $($which is independent of $x_0$, $R_0$ and $R_1$$)$ such
that for every $R_1<R<r_G$ and $x\in V$,
\begin{equation}\label{np1-3}
\Ee_x\big[\rho\big(\hat X_t^{R, R},x\big)\big]\le C_3R\left(\frac{t}{R^\alpha}\right)^{1/2}
\left[1+\log\left(\frac{R^{\alpha}}{t}\right)
\right],\quad \forall \ R^{\theta' \alpha}\le t \le R^{\alpha},
\end{equation}
\end{proposition}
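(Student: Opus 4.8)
The plan is to follow the Bass--Nash scheme, estimating the growth of the quantity $u(t):=\Ee_x[\rho(\hat X_t^{R,R},x)]$ by differentiating and controlling $u'(t)$ through the heat kernel of $\hat X^{R,R}$. First I would record the on-diagonal bound for $\hat p^{R,R}(t,y,z)$: the conductances $\hat w$ agree with $w$ on $B(x_0,R)$ and equal $1$ outside, so hypotheses \eqref{np1-1}, \eqref{np1-1b}, \eqref{np1-1a} (valid on $B(x_0,3R)$) together with the trivial bounds for the constant conductance $1$ outside allow one to verify \eqref{nl2-1-0}, \eqref{nl2-1-1}, \eqref{np2-1-1a} and \eqref{np2-1-1} for the localized graph with $\delta$ of order $R$; Proposition \ref{np2-1} then yields $\hat p^{R,R}(t,y,z)\le C t^{-d/\alpha}$ for $2R^{\theta'\alpha}\le t\le R^\alpha$ (after enlarging $R_0$ to some $R_1$ so that $R$ is in the admissible range). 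This is the input that replaces pointwise jump-density estimates.

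Next I would set up the differential inequality. Writing $u(t)=\sum_{y}\rho(y,x)\,\hat p^{R,R}(t,x,y)\mu_y$, differentiation in $t$ gives $u'(t)=\sum_{y}\rho(y,x)\,L^{R,R}\hat p^{R,R}(t,x,\cdot)(y)\mu_y$, and after a symmetrization (moving $L^{R,R}$ onto $\rho(\cdot,x)$) this becomes a bilinear expression $\tfrac12\sum_{y,z:\rho(y,z)\le R}(\rho(y,x)-\rho(z,x))\big(\hat p^{R,R}(t,x,y)-\hat p^{R,R}(t,x,z)\big)\frac{\hat w_{y,z}}{\rho(y,z)^{d+\alpha}}\mu_y\mu_z$. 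Using $|\rho(y,x)-\rho(z,x)|\le \rho(y,z)$ and Cauchy--Schwarz, this is bounded by a product of a Dirichlet-energy factor $\mathcal E(t)^{1/2}$ (where $\mathcal E(t)=-\tfrac{d}{dt}\|\hat p^{R,R}(t,x,\cdot)\|_2^2$, as in Step (2) of Proposition \ref{np2-1}) and a weight factor $\big(\sum_{y}\sum_{z:\rho(y,z)\le R}\rho(y,z)^2\frac{\hat w_{y,z}}{\rho(y,z)^{d+\alpha}}\hat p^{R,R}(t,x,y)\mu_y\mu_z\big)^{1/2}$; the inner sum over $z$ is exactly controlled by \eqref{np1-1}-type bounds (order $R^{2-\alpha}$), so the weight factor is $\lesssim R^{1-\alpha/2}$. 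Hence $u'(t)\le C R^{1-\alpha/2}\mathcal E(t)^{1/2}$. Separately, the on-diagonal bound controls $\psi(t)=\|\hat p^{R,R}(t/2,x,\cdot)\|_2^2 \le C t^{-d/\alpha}$, and since $\int_t^{2t}\mathcal E(s)\,ds = \psi(t/2)-\psi(t)\le \psi(t/2)\le Ct^{-d/\alpha}$, an averaging argument produces, for each $t$, a nearby time at which $\mathcal E \lesssim t^{-1-d/\alpha}$; integrating $u'$ over dyadic blocks from $R^{\theta'\alpha}$ up to $t$ and summing the resulting geometric-type series $\sum R^{1-\alpha/2}(2^{-k}t)^{-(1+d/\alpha)/2}\cdot 2^{-k}t$ gives the bound $u(t)\lesssim R^{1-\alpha/2} t^{1/2 - d/(2\alpha)}\cdot(\text{number of blocks})$, and the number of dyadic blocks between $R^{\theta'\alpha}$ and $R^\alpha$ is $O(\log(R^\alpha/t))$, producing the logarithmic factor in \eqref{np1-3}. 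One also needs the complementary regime $t$ close to $R^\alpha$, handled by the trivial bound $\rho(\hat X^{R,R}_t,x)\le$ (number of jumps) and a crude jump-rate estimate, or by just noting $u(R^{\theta'\alpha})\lesssim R$ and monotone-type control.

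The main obstacle I expect is the bookkeeping in the weight factor: one must show $\sum_{z:\rho(y,z)\le R}\rho(y,z)^{2}\frac{\hat w_{y,z}}{\rho(y,z)^{d+\alpha}}\mu_z\le C R^{2-\alpha}$ \emph{uniformly in $y\in V$}, including $y$ far from $x_0$ where $\hat w_{y,z}$ may equal $1$ for most $z$ but could equal the original (possibly large) $w_{y,z}$ when $z\in B(x_0,R)$. For $y$ with $\rho(y,x_0)$ much larger than $R$, such $z$ are at distance $\gtrsim \rho(y,x_0)$, so their contribution is negligible; for $y\in B(x_0,3R)$ one uses \eqref{np1-1} directly; the intermediate range requires a careful but routine splitting. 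A second delicate point is that $u(t)$ is a sum over the infinite graph and differentiation under the sum must be justified — this is done exactly as in Step (1) of Proposition \ref{np2-1} using the exponential spatial decay $\hat p^{R,R}(t,x,y)\le c_M\exp(-\rho(x,y)+C e^{CR}t)$ coming from \eqref{e:bound} and the Davies method, which also guarantees $u(t)<\infty$. Finally, the passage from ``good times'' where $\mathcal E$ is small to a bound valid at \emph{every} $t$ uses the monotonicity of $\psi$ and a standard chaining argument, as in the proof of Proposition \ref{np2-1} Step (3).
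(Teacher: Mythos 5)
Your scaffolding is the right one — the Bass--Nash scheme, the on-diagonal heat kernel bound from Proposition \ref{np2-1}, differentiating $M(t)=\Ee_x[\rho(\hat X_t^{R,R},x)]$ and passing the generator onto $\rho(\cdot,x)$ by symmetry, and the observation that the uniform weight bound $\sup_{y}\sum_{z:\rho(y,z)\le R}\hat w_{y,z}\rho(y,z)^{-(d+\alpha-2)}\lesssim R^{2-\alpha}$ holds on all of $V$ because $\hat w\equiv 1$ off $B(x_0,R)$. Your footnote about justifying termwise differentiation via the exponential spatial decay, and about the $w\mapsto w+\varepsilon$ regularization, also matches what the paper does.

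The gap is in the central estimate on $M'(t)$. You claim $M'(t)\lesssim R^{1-\alpha/2}\,\mathcal E(t)^{1/2}$ with $\mathcal E(t)=-\tfrac{d}{dt}\|\hat p^{R,R}(t,x,\cdot)\|_2^2$ the ordinary Dirichlet energy, while simultaneously describing the other Cauchy--Schwarz factor as carrying the weight $\hat p^{R,R}(t,x,y)$. These two cannot both come out of the same Cauchy--Schwarz split. The unweighted pairing produces $\mathcal E(t)^{1/2}$ but leaves the spatial factor $\sum_{y,z:\rho(y,z)\le R}(\rho(y,x)-\rho(z,x))^2\hat w_{y,z}\rho(y,z)^{-(d+\alpha)}\mu_y\mu_z$, which is $\gtrsim R^{2-\alpha}\sum_y\mu_y=\infty$. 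To tame that divergence one must insert the heat-kernel weight, writing the summand as $\bigl[a_{yz}\sqrt{f_t(y)+f_t(z)}\bigr]\cdot\bigl[b_{yz}/\sqrt{f_t(y)+f_t(z)}\bigr]$; then the first factor is $\lesssim R^{2-\alpha}$ as desired, but the second factor is the Fisher-information quantity $\sum (f_t(y)-f_t(z))^2(f_t(y)+f_t(z))^{-1}c_{yz}$, which (via $\tfrac{(s-t)^2}{s+t}\le(s-t)(\log s-\log t)$) is bounded by $2Q'(t)$, the entropy dissipation — not by $\mathcal E(t)$. Since $f_t$ can be arbitrarily small on the truncated range, the Fisher information is not controlled by $\mathcal E$ from above, so one cannot swap the two.

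Moreover, even granting $M'(t)\lesssim R^{1-\alpha/2}\mathcal E(t)^{1/2}$, the integration step in your dyadic scheme does not close. The on-diagonal bound gives $\psi(t)\lesssim t^{-d/\alpha}$, so $\mathcal E(t)\sim t^{-1-d/\alpha}$ and $\int^t\mathcal E(s)^{1/2}\,ds$ is dominated by the lower limit of integration $s\approx R^{\theta'\alpha}$, not the upper one; the resulting quantity scales like $R^{1-\alpha/2}R^{\theta'\alpha(1-d/\alpha)/2}$ rather than the target $R^{1-\alpha/2}t^{1/2}[1+\log(R^\alpha/t)]$. The entropy dissipation $Q'(t)$, by contrast, decays only like $t^{-1}$, and this is exactly why the Nash method replaces $\mathcal E$ by $Q'$. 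The paper then writes $K(t)=d^{-1}(Q(t)+c-\tfrac{d}{\alpha}\log t)\ge 0$, uses $(a+b)^{1/2}\le b^{1/2}+a/(2b^{1/2})$ with $b=\tfrac{d}{\alpha t}$ to split $\int Q'(s)^{1/2}\,ds$ into a $\int s^{-1/2}\,ds\sim t^{1/2}$ term and an integration-by-parts term $t^{1/2}K(t)$, and finally closes the loop with the lower bound $M(t)\ge c\,e^{Q(t)/d}$ (obtained from optimizing $-Q(t)+aM(t)+b\ge -Ce^{-b}a^{-d}$ over $a$ and $b$). Combining the two bounds on $M(t)$ gives a self-improving inequality $e^{K(t)}\lesssim R^{1-\alpha/2}(1+K(t))t^{1/2-1/\alpha}$, which forces $K(t)\lesssim 1+\log(R^\alpha/t)$ and produces the logarithmic factor. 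This bootstrap is the step your proposal is missing.

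One smaller remark: the ``intermediate range'' of $y$ in the weight factor is actually vacuous. For $y\notin B(x_0,3R)$ every $z$ with $\rho(y,z)\le R$ satisfies $\rho(z,x_0)>2R$, so $\hat w_{y,z}=1$ and the bound is trivial; for $y\in B(x_0,3R)$ it is exactly hypothesis \eqref{np1-1}.
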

\begin{proof}
Throughout the proof, we first suppose that
there exist
positive constants $c(x_0, R)$ and $\tilde c(x_0,R)$
such that
\begin{equation}\label{np1-2a}
\tilde c(x_0,R)\le \inf_{x,y\in V}\hat w_{x,y}\le \sup_{x,y \in V}\hat w_{x,y}\le c(x_0,R).
\end{equation}
If \eqref{np1-2a} is not satisfied, then, by taking
$w_{x,y}^{\varepsilon}:=w_{x,y}+\varepsilon$ and then letting
$\varepsilon \downarrow 0$, we can prove that \eqref{np1-3} still
holds true. Moreover, all the constants in the proof below are
independent of $\varepsilon$ unless specifically claimed.
The argument below is partly motivated by the method of Bass \cite{Bass} for diffusions
(see also Barlow \cite{B} and Nash
\cite{Nash}), but some non-trivial modifications are required for jump processes.

{\bf Step (1):}
By \eqref{np1-1}, \eqref{np1-1b}, \eqref{np1-1a} and the definition of $\hat w_{x,y}$, for every $R_0<R<r_G$ and $R^{\theta}\le r\le R$,
\begin{equation}\label{np1-4}
\sup_{x\in V}\sum_{y\in V:\rho(x,y)\le r}
\frac{\hat w_{x,y}}{\rho(x,y)^{d+\alpha-2}}\le c_0r^{2-\alpha},
\end{equation}
$
\inf_{x\in V}\mu(B^{\hat w}(x,r))\ge c_1r^d
$
and
$
\sup_{x \in V}\sum_{y \in B^{\hat w}(x,r)}
\hat w_{x,y}^{-1}\le c_0r^d,
$
where $B^{\hat w}(x,r):=\{z\in V:\rho(z,x)\le r,\ \hat w_{z,x}>0\}$.
Let $\theta' \in (\theta,1)$ and $\theta_0=({\theta+\theta'})/{2}$. Taking $\rho=R$ in Proposition \ref{np2-1}, we find that there exists a constant $\tilde R_0\ge R_0$ (which only depends on $\theta$ and  $\theta'$) such that
whenever $\tilde R_0<R<r_G$,
\begin{equation}\label{np1-2}
\hat p^{R,R}(t,x,y)\le c_2t^{-d/\alpha},\quad \forall\
2R^{\theta_0 \alpha}\le t \le R^{\alpha},\ x,y\in V.
\end{equation}

For every $t>0$, we define
$$
M(t)=\sum_{y\in V}\rho(x,y)\hat p^{R,R}(t,x,y)\mu_y,\quad Q(t)=-\sum_{y \in V}\hat p^{R,R}(t,x,y)\left[\log \hat p^{R,R}(t,x,y)\right]\mu_y.
$$
Below, we fix $x \in V$ and  set $f_t(y)=\hat p^{
R,R}(t,x,y)$ for all $y\in V$ and $t>0$.

By \eqref{np1-2a}, we
can obtain upper and lower bounds for $\hat p^{R,R}(t,x,y)$
(see \cite{D} for upper bounds on graph or \cite{CKKmn} for two-sided estimates in the Euclidean space), which yields that
\begin{align*}
&\sum_{y,z\in V:\rho(y,z)\le R}|f_t(y)-f_t(z)|
|\log f_t(y)-\log f_t(z)|\frac{\hat w_{y,z}}{\rho(y,z)^{d+\alpha}}\mu_y\mu_z\\
&\le \sum_{y,z\in V: \rho(y,z)\le R}\big(f_t(y)+f_t(z)\big)
\big(|\log f_t(y)|+|\log f_t(z)|\big)\frac{\hat w_{y,z}}{\rho(y,z)^{d+\alpha}}\mu_y\mu_z
<\infty.
\end{align*}
Thus,
\begin{align*}
&-\sum_{y\in V}(\log f_t(y)+1)\hat L^{R,R}f_t(y)\mu_y\\
&=\frac{1}{2}\sum_{y,z\in V:\rho(y,z)\le R}\big(f_t(y)-f_t(z)\big)
\big(\log f_t(y)-\log f_t(z)\big)\frac{\hat w_{y,z}}{\rho(y,z)^{d+\alpha}}\mu_y\mu_z,
\end{align*} where $\hat L^{R,R}$ is the generator associated with
$(\hat D^{x_0,R, R},\hat \F^{x_0,R,R})$, i.e.,
$$\hat L^{R,R} f(x)=\sum_{y\in V: \rho(x,y)\le R}(f(y)-f(x))\frac{\hat w_{x,y}}{\rho(x,y)^{d+\alpha}}\mu_y.$$
Therefore,
\begin{align*}
Q'(t)&=-\sum_{y\in V}(\log f_t(y)+1)\hat L^{R,R}f_t(y)\mu_y\\
&=\frac{1}{2}\sum_{y,z\in V:\rho(y,z)\le R}\big(f_t(y)-f_t(z)\big)
\big(\log f_t(y)-\log f_t(z)\big)\frac{\hat w_{y,z}}{\rho(y,z)^{d+\alpha}}\mu_y\mu_z\ge 0.
\end{align*}
In particular, $Q(\cdot)$ is a non-decreasing function on $(0,\infty)$.

On the other hand, for all $\tilde R_0<R<r_G$, by the Cauchy-Schwarz inequality,
\begin{align*}
M'(t)&=\sum_{y\in V}\rho(x,y)\hat L^{R,R}f_t(y)\mu_y\\
&=-\frac{1}{2}\sum_{y,z \in V: \rho(y,z)\le R}\big(\rho(x,y)-\rho(x,z)\big)\big(f_t(y)-f_t(z)\big)\frac{\hat w_{y,z}}{\rho(y,z)^{d+\alpha}}
\mu_y\mu_z\\
&\le \left(\frac{1}{4}\sum_{y,z \in V: \rho(y,z)\le R}
\big(\rho(x,y)-\rho(x,z)\big)^2
\big(f_t(y)+f_t(z)\big)\frac{\hat w_{y,z}}{\rho(y,z)^{d+\alpha}}\mu_y\mu_z\right)^{1/2}\\
&\quad \times\left(\sum_{y,z\in V: \rho(y,z)\le R}
\frac{(f_t(y)-f_t(z))^2}{f_t(y)+f_t(z)}\frac{\hat w_{y,z}}{\rho(y,z)^{d+\alpha}}\mu_y\mu_z\right)^{1/2}\\
&\le \left(\frac{c_M}{2}\sup_{z \in V}\sum_{y\in V: \rho(y,z)\le R}
\frac{\hat w_{y,z}}{\rho(y,z)^{d+\alpha-2}}\right)^{1/2}\\
&\quad \times \left(\sum_{y,z\in V: \rho(y,z)\le R}
\frac{(f_t(y)-f_t(z))^2}{f_t(y)+f_t(z)}\frac{\hat w_{y,z}}{\rho(y,z)^{d+\alpha}}\mu_y\mu_z\right)^{1/2}\\
&\le c_3R^{1-\alpha/2}\left(\sum_{y,z\in V:\rho(y,z)\le R}
\frac{(f_t(y)-f_t(z))^2}{f_t(y)+f_t(z)}\frac{\hat w_{y,z}}{\rho(y,z)^{d+\alpha}}\mu_y\mu_z\right)^{1/2},
\end{align*}
where the equality above follows from the fact
$$
\sum_{y,z\in V:\rho(y,z)\le R}|f_t(y)-f_t(z)|
\frac{\hat w_{y,z}}{\rho(y,z)^{d+\alpha-1}}<\infty,
$$
thank to \eqref{np1-2a} again, in the second inequality we used \eqref{al2-0} and the fact that
${\sum_{z\in V}}f_t(z)\mu_z\le 1$
for all $t>0$, and in the last inequality we have
used \eqref{np1-4}.

Noting that
$$
\frac{(s-t)^2}{s+t}\le \big(s-t\big)\big(\log s-\log t\big), \quad s,t>0,
$$
we have
\begin{align*}
&\sum_{y,z\in V:\rho(y,z)\le R}
\frac{(f_t(y)-f_t(z))^2}{f_t(y)+f_t(z)}\frac{\hat w_{y,z}}{\rho(y,z)^{d+\alpha}}\mu_y\mu_z\\
&\le \sum_{y,z\in V:\rho(y,z)\le R}\big(f_t(y)-f_t(z)\big)
\big(\log f_t(y)-\log f_t(z)\big)\frac{\hat w_{y,z}}{\rho(y,z)^{d+\alpha}}\mu_y\mu_z=2Q'(t).
\end{align*}
Hence, combining all the estimates above, we arrive at the statement that for all $\tilde R_0<R<r_G$,
\begin{equation}\label{np1-4a}
M'(t)\le \sqrt{2}c_3R^{1-\alpha/2}Q'(t)^{1/2},\ \ \forall\ t>0.
\end{equation}

{\bf Step (2):} \eqref{np1-2} yields that for all $\tilde R_0<R<r_G$ and $2R^{\theta_0\alpha}\le t \le R^{\alpha}$,
\begin{align*}
Q(t)\ge -\left(\sum_{y \in
V}f_t(y)\right)\log (c_2t^{-d/\alpha})=\frac{d}{\alpha}\log t-c_4,\end{align*} where $c_4>0$ and the conservativeness of $\hat X^{R,R}$ was used in the right hand equality. Define
$$K(t)=d^{-1}\Big(Q(t)+c_4-\frac{d}{\alpha}\log t\Big),\quad t>0.$$ Obviously,
$K(t)\ge 0$ for all $t\in [2R^{\theta_0\alpha},R^{\alpha}]$, and
\begin{equation}\label{np1-5}
Q'(t)=d K'(t)+\frac{d}{\alpha t},\quad t>0.
\end{equation}
Set $T_0(R):=0\vee \sup\{t<2R^{\theta_0\alpha}: K(t)< 0 \}.$ It is easy to see  that
$K(t)\ge 0$ for all $t\in [T_0(R),R^{\alpha}]$ and $T_0(R)\le 2R^{\theta_0\alpha}$.
By \eqref{np1-4a} and \eqref{np1-5}, we have for all $t \in [T_0(R),R^{\alpha}]$,
\begin{equation}\label{np1-6}
\begin{split}
M(t)&=M(T_0(R))+\int^t_{T_0(R)}M'(s)\,ds\le M(T_0(R))+\sqrt{2}c_3R^{1-\alpha/2}\int^t_{T_0(R)}Q'(s)^{1/2}\,ds\\
&= M(T_0(R))+\sqrt{2}c_3R^{1-\alpha/2}\int^t_{T_0(R)}\Big(d K'(s)+\frac{d}{\alpha s}\Big)^{1/2}\,ds.
\end{split}
\end{equation}

Note that, by the mean-value theorem,  for every $a\in\R$ and $b>0$ with $a+b\ge 0$,
\begin{equation}\label{np1-7}
(a+b)^{1/2}\le b^{1/2}+a/(2b^{1/2}).
\end{equation}
Then, applying \eqref{np1-7} in the second term of the right hand
side of \eqref{np1-6} with $a=K'(s)$ and $b=\frac{1}{\alpha s}$, we
obtain that for all $t \in [T_0(R), R^{\alpha}]$,
\begin{equation}\label{np1-8}
\begin{split}
M(t)&\le M(T_0(R))+c_4R^{1-\alpha/2}
\int_{T_0(R)}^t s^{-1/2}\,ds+c_5R^{1-\alpha/2}\int_{T_0(R)}^t s^{1/2}K'(s)\,ds\\
&\le M(T_0(R))+c_6R^{1-\alpha/2}t^{1/2} +
c_5R^{1-\alpha/2}\int_{T_0(R)}^t\left[\big(s^{1/2}K(s)\big)'-\frac{s^{-1/2}K(s)}{2}\right]\,ds\\
&\le
M(T_0(R))+c_6R^{1-\alpha/2}t^{1/2}+c_5R^{1-\alpha/2}t^{1/2}K(t),
\end{split}
\end{equation}
where, in the last inequality, we used the fact that $K(t)\ge 0$ for all
$t \in [T_0(R),R^{\alpha}]$.

Furthermore, suppose that $T_0(R)>0$. Since $Q'(t)\ge 0$, by \eqref{np1-4a} and the
Cauchy-Schwarz
inequality, we have
\begin{align*}
M(T_0(R))&=\int_0^{T_0(R)}M'(s)\,ds\le \sqrt{2}c_3R^{1-\alpha/2}\int_0^{T_0(R)}Q'(s)^{1/2}\,ds\\
&\le \sqrt{2} c_3R^{1-\alpha/2}T_0(R)^{1/2}\left(\int_0^{T_0(R)}Q'(s)\,ds\right)^{1/2}\\
&\le c_7R^{1-\alpha(1-\theta_0)/2}\big(Q(T_0(R))-(Q(0)\wedge0)\big)^{1/2},
\end{align*}
where in the last inequality we have used
the fact that $T_0(R)\le 2R^{\theta_0
\alpha}$.
By the definition of $T_0(R)$, it holds that
$K(T_0(R))=0$, and so
$Q(T_0(R))=({d}/{\alpha})\log T_0(R)-c_4\le c_8(1+\log R),$
where we have used again $T_0(R)\le 2R^{\theta_0 \alpha}$.
 On the other hand,
$Q(0)=\lim_{t\to0} Q(t)=\log \mu_x\ge -\log c_M.$
Thus, we can find
$R_1\ge 1$ large enough such that for all $R>R_1$ and $t \in
[R^{\theta'\alpha},R^{\alpha}]$,
\begin{align*}
M(T_0(R))&\le c_9R^{1-\alpha(1-\theta_0)/2}
(1+\log R)^{1/2}= c_9R^{1-\alpha/2}R^{\theta_0\alpha/2}
(1+\log R)^{1/2}\\
&\le c_9R^{1-\alpha/2}R^{\theta'\alpha/2}\le c_9R^{1-\alpha/2}t^{1/2},
\end{align*}
where in the second inequality we used the fact that $\theta_0\in (\theta,\theta')$, and the last inequality is due to $t\ge R^{\theta'\alpha}$. Note
that $M(0)=0$, so the above estimate still holds when $T_0(R)=0$.

Therefore, combining this with \eqref{np1-8}, we arrive at the statement that
for all $t \in [R^{\theta'\alpha},R^{\alpha}]$,
\begin{equation}\label{np1-9}
M(t)\le c_{10}R^{1-\alpha/2}t^{1/2}\big(1+K(t)\big).
\end{equation}

{\bf Step (3):} Note that $s(\log s+t)\ge -e^{-1-t}$ for all $s>0$ and $t\in \R$. Then,
for every $0<a\le 2$, $b\in \R$ and $t>0$,
\begin{equation}\label{np1-10a}
\begin{split}
-Q(t)+aM(t)+b&=\sum_{y \in V}f_t(y)\big(\log f_t(y)+a\rho(x,y)+b\big)\mu_y\\
&\ge -\sum_{y \in V}\exp\big(-1-a\rho(x,y)-b\big)\mu_y\ge -c_{11}e^{-b}a^{-d},
\end{split}
\end{equation}
where
the equality above follows from the conservativeness of $X^{R,R}$,
and in the last inequality we used the fact that
\begin{align*}
\sum_{y \in V}e^{-a \rho(x,y)}\mu_y&\le c_M+
\sum_{k=1}^{\infty}\sum_{y\in B(x,2^k)\setminus B(x,2^{k-1})}e^{-a2^{k-1}}\mu_y\le  c_M+c_G\sum_{k=1}^{\infty}2^{d k}e^{-a2^{k-1}}
\le Ca^{-d}
\end{align*}
for all
$0<a\le 2$ (see \cite[line 6--7 in p.\ 3056]{B}).

According to \eqref{np1-2}, we could find $R_1>\tilde R_0$ large enough such that for all
$R_1<R<r_G$ and $t\in [R^{\theta'\alpha},R^{\alpha}]$,
\begin{align*}
M(t)&=\sum_{y\in V}\rho(x,y)f_t(y)\mu_y
\ge \sum_{y \in V: \rho(x,y)>0}f_t(y)\mu_y= 1-\Pp_x\big(\hat X_t^{R,R}=x\big)\\
&\ge 1-c_2t^{-d/\alpha}\ge 1-c_2R^{-\theta'd}>1/2.
\end{align*}
Then, choosing $a=1/M(t)$ and $e^b=M(t)^d=a^{-d}$ in
\eqref{np1-10a}, we have
$
-Q(t)+1+d\log M(t)\ge -c_{11},
$
which implies that for all $R_1<R<r_G$ and $t \in [R^{\theta'\alpha},R^{\alpha}]$,
$
M(t)\ge c_{12}\exp(Q(t)/d).
$
This along with the definition of $K(t)$ yields that
\begin{equation}\label{np1-10}
M(t)\ge c_{12}\exp(Q(t)/d)\ge c_{13}t^{1/\alpha}e^{K(t)}.
\end{equation}
Combining \eqref{np1-9} with \eqref{np1-10}, we obtain that for all
$t \in [R^{\theta'\alpha},R^{\alpha}]$,
$$
e^{K(t)}\le c_{14}R^{1-\alpha/2}\big(1+K(t)\big)t^{1/2-1/\alpha}, $$
which is equivalent to
$$
K(t)\le
c_{15}\left[1+\log\left(\frac{R^{\alpha}}{t}\right)+\log(1+K(t))\right].
$$
This implies that for all $R_1<R<r_G$ and $t \in [R^{\theta'\alpha},R^{\alpha}]$,
$$
K(t)\le c_{16}\left[1+\log\left(\frac{R^{\alpha}}{t}\right)\right].
$$
The inequality above along with \eqref{np1-9} further gives us that for all $R_1<R<r_G$ and $t \in [R^{\theta'\alpha},R^{\alpha}]$,
$$M(t)\le
c_{17}R^{1-\alpha/2}t^{1/2}\left[1+\log\left(\frac{R^{\alpha}}{t}\right)
\right]\le c_{18}R\left(\frac{t}{R^{\alpha}}\right)^{1/2}\left[1+\log\left(\frac{R^{\alpha}}{t}\right)
\right].
$$
The proof is complete.
\end{proof}

\section{
$\alpha$-stable-like processes on graphs} Let $(D,\F)$ be a regular
symmetric Dirichlet form on $L^2(V;\mu)$ given in the beginning of
Section \ref{S:tr}, i.e.,
\begin{align*}
D(f,f)&=\frac{1}{2}\sum_{x,y\in
V}(f(x)-f(y))^2\frac{w_{x,y}}{\rho(x,y)^{d+\alpha}}\mu_x\mu_y,\quad
f\in \F=\{f\in L^2(V;\mu): D(f,f)<\infty\},
\end{align*}
where $\alpha\in (0,2)$ and $\{w_{x,y}:x,y\in V\}$ is a sequence
such that $w_{x,x}=0$ for all $x\in V$, $w_{x,y}\ge0$ and
$w_{x,y}=w_{y,x}$ for all $x\neq y$, and \eqref{eq:condwxy} holds.
Let $X:=((X_t)_{t\ge0}, (\Pp_x)_{x\in V})$ be the associated
symmetric $\alpha$-stable-like process associated with $(D,\F)$.

In this section, we will derive exit time estimates for the process
$X$ and the H\"{o}lder regularity of the associated
caloric
functions. Both statements are crucial to establish the weak
convergence of $\alpha$-stable-like processes in the next section.

\subsection{Estimates of exit time: for any fixed starting point}
 In this part, we are concerned on exit
time estimates of the process $X$ for any fixed starting point.
The main statement is as follows.

\begin{proposition}\label{np-2}
Assume that there exist $R_0\ge 1$, $\theta\in (0,1)$ and $C_1>0$
such that
for every $R_0<R<r_G$ and $R^{\theta}\le r \le R$,
\eqref{np1-1}, \eqref{np1-1b} and \eqref{np1-1a} as well as
\begin{equation}\label{np2-1a}
\sup_{x \in B(x_0, R)} \sum_{y\in
V:\rho(x,y)>R}\frac{w_{x,y}}{\rho(x,y)^{d+\alpha}}\le C_1R^{-\alpha}
\end{equation} hold. Then
\begin{itemize}
\item[(i)] for any $\theta'\in (\theta,1)$, there exist constants $R_1\ge 1$ $($which depends only on $\theta$, $\theta'$ and $R_0$
$)$ and
$C_2>0$ $($which is independent of $x_0$, $R_0$ and $R_1$
$)$ such that for
every $R_1<R<r_G$,
\begin{equation}\label{np2-2a}
\Pp_{x_0}\big(\tau_{B(x_0, R)}\le t\big)\le
C_2\left(\frac{t}{R^{\alpha}}\right)^{1/2}\left[1\vee\log\left(\frac{R^{\alpha}}{t}\right)
\right],\quad \forall\ t\ge R^{\theta'\alpha}.
\end{equation}
\item[(ii)] for any $\varepsilon>0$,
there exist constants $R_2\ge1$ $($which depends only on $\theta$, $R_0$
and $\varepsilon)$ and $C_3(\varepsilon)>0$
$($which is independent of
$x_0$, $R_0$ and $R_2$
$)$
such that for all $R_2<R<r_G$,
\begin{equation}\label{np2-2} \Pp_{x_0}\big(\tau_{B(x_0, R)}\le
t\big)\le \varepsilon+\frac{C_3(\varepsilon)t}{R^{\alpha}},\quad
\forall\ t>0.
\end{equation}
In particular, the
process $X$ is conservative.
\end{itemize}
\end{proposition}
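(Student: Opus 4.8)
The plan is to bootstrap the moment estimate \eqref{np1-3} of Proposition~\ref{np-1} for the localized--truncated process $\hat X^{R,R}$ into an exit time estimate for $X$, passing from $X$ to $\hat X^{R}$ to $\hat X^{R,R}$. By \eqref{ne2-1}, $\Pp_{x_0}(\tau_{B(x_0,R)}\le t)=\Pp_{x_0}(\hat\tau^{R}_{B(x_0,R)}\le t)$, and by \eqref{nl1-2} together with \eqref{np2-1a} (which forces $\sup_{y\in B(x_0,R)}J(y,R)\le c_MC_1R^{-\alpha}$),
\begin{equation*}
\Pp_{x_0}(\tau_{B(x_0,R)}\le t)\le \Pp_{x_0}\big(\hat\tau^{R,R}_{B(x_0,R)}\le t\big)+c\,tR^{-\alpha}.
\end{equation*}
Since $\hat w\equiv1$ off $B(x_0,R)$ and $\hat w\le w$ on $B(x_0,R)$, the hypotheses \eqref{np1-1}, \eqref{np1-1b}, \eqref{np1-1a} carry over to $\hat w$ on all of $V$, so Proposition~\ref{np-1} applies: for $R_1<R<r_G$ and \emph{every} $x\in V$, $\Ee_x[\rho(\hat X^{R,R}_t,x)]\le C_3R(t/R^{\alpha})^{1/2}[1+\log(R^{\alpha}/t)]$ whenever $R^{\theta'\alpha}\le t\le R^{\alpha}$. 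It remains to bound $\Pp_{x_0}(\hat\tau^{R,R}_{B(x_0,R)}\le t)$, which I would do by the Bass--Nash method.

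Fix a small $c_0\in(0,1)$ (to be pinned down below). For $t>c_0R^{\alpha}$ the estimate \eqref{np2-2a} is trivial once $C_2\ge c_0^{-1/2}$, since its right-hand side then exceeds $1$; so assume $R^{\theta'\alpha}\le t\le c_0R^{\alpha}$, and put $\tau:=\hat\tau^{R,R}_{B(x_0,R)}$, $s:=2t$. Because the jumps of $\hat X^{R,R}$ have length $\le R$, $\rho(\hat X^{R,R}_\tau,x_0)>R$ on $\{\tau<\infty\}$. Split $\{\tau\le t\}$ according to whether $\rho(\hat X^{R,R}_s,x_0)\ge R/2$. On the first event, drop the first condition and apply Chebyshev with the moment bound at time $s=2t\le R^{\alpha}$: the probability is $\le\frac2R\Ee_{x_0}[\rho(\hat X^{R,R}_{2t},x_0)]\le c_3(t/R^{\alpha})^{1/2}[1+\log(R^{\alpha}/t)]$. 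On the complement, the strong Markov property at $\tau$ together with $\rho(\hat X^{R,R}_\tau,x_0)>R$ forces the restarted process to move at least $R/2$ in the remaining time $s-\tau\in[t,2t]\subseteq[R^{\theta'\alpha},R^{\alpha}]$; Chebyshev and the moment bound (crucially, uniform over the starting point) then bound this event by $c_4(t/R^{\alpha})^{1/2}[1+\log(R^{\alpha}/t)]\,\Pp_{x_0}(\tau\le t)$.

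Combining the two pieces gives the self-improving inequality
\begin{equation*}
\Pp_{x_0}(\tau\le t)\le\Big(\frac{t}{R^{\alpha}}\Big)^{1/2}\Big[1+\log\frac{R^{\alpha}}{t}\Big]\Big(c_3+c_4\,\Pp_{x_0}(\tau\le t)\Big),
\end{equation*}
and the one genuine obstacle is its resolution: the coefficient $c_4(t/R^{\alpha})^{1/2}[1+\log(R^{\alpha}/t)]$ is \emph{not} uniformly small on $[R^{\theta'\alpha},R^{\alpha}]$ (it stays bounded away from $0$ for $t\asymp R^{\alpha}$), but since $u\mapsto\sqrt u(1+\log u^{-1})$ is increasing on $(0,e^{-1})$ we may choose $c_0=c_0(c_4)$ small enough that this coefficient is $\le\tfrac12$ for all $t\le c_0R^{\alpha}$ --- precisely why we peeled off $t>c_0R^{\alpha}$ at the start. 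Absorbing then yields $\Pp_{x_0}(\tau\le t)\le 2c_3(t/R^{\alpha})^{1/2}[1\vee\log(R^{\alpha}/t)]$, and together with the first display (and $tR^{-\alpha}\le(tR^{-\alpha})^{1/2}$ for $t\le R^{\alpha}$) this proves (i).

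For (ii): when $t\ge R^{\theta'\alpha}$, feed (i) into the elementary inequality $\sqrt u\,(1\vee\log u^{-1})\le\varepsilon/C_2+C'(\varepsilon)u$, valid for all $u>0$ (split at a small $u_0(\varepsilon)$, using monotonicity below $u_0$ and domination by $u$ above), with $u=t/R^{\alpha}$, to obtain $\Pp_{x_0}(\tau_{B(x_0,R)}\le t)\le\varepsilon+C_3(\varepsilon)tR^{-\alpha}$; when $0<t<R^{\theta'\alpha}$, monotonicity of $t\mapsto\Pp_{x_0}(\tau_{B(x_0,R)}\le t)$ and (i) at $t=R^{\theta'\alpha}$ give the bound $C_2R^{-(1-\theta')\alpha/2}[1\vee(1-\theta')\alpha\log R]$, which tends to $0$ as $R\to\infty$ and hence is $\le\varepsilon$ once $R>R_2(\varepsilon)$. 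This proves \eqref{np2-2}. Finally, letting $\varepsilon\downarrow0$ and $R\uparrow r_G$ in \eqref{np2-2} forces $\Pp_{x_0}(\tau_{B(x_0,R)}\le t)\to0$ for every $t$, which rules out explosion (the balls $B(x_0,R)$ exhausting $V$, e.g. when $r_G=\infty$) and shows that $X$ is conservative.
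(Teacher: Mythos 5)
Your proof is correct and takes essentially the same route as the paper: pass from $X$ to $\hat X^{R}$ via \eqref{ne2-1}, from $\hat X^{R}$ to $\hat X^{R,R}$ via Lemma~\ref{nl-1} and \eqref{np2-1a}, and then bound $\Pp_{x_0}(\hat\tau^{R,R}_{B(x_0,R)}\le t)$ by combining Chebyshev with the moment estimate of Proposition~\ref{np-1} at a later time $2t$, splitting on whether $\rho(\hat X^{R,R}_{2t},x_0)$ is large or small and applying the strong Markov property at the exit time. Part (ii) and the conservativeness are handled the same way as in the paper.

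The one place you deviate, and where you manufacture an unnecessary difficulty, is in bounding the second event. You keep the factor $\Ee_{x_0}[\I_{\{\tau\le t\}}]=\Pp_{x_0}(\tau\le t)$ and land in a self-improving inequality
\[
\Pp_{x_0}(\tau\le t)\le\big(c_3+c_4\,\Pp_{x_0}(\tau\le t)\big)\Big(\tfrac{t}{R^\alpha}\Big)^{1/2}\Big[1+\log\tfrac{R^\alpha}{t}\Big],
\]
which you then resolve by restricting to $t\le c_0 R^\alpha$ so that the coefficient in front of $\Pp_{x_0}(\tau\le t)$ is $\le1/2$. This is correct, but the whole absorption step is avoidable: in the paper's argument one simply bounds
\[
\Ee_{x_0}\!\big[\I_{\{\tau\le t\}}\,\Pp_{\hat X_\tau}\big(\rho(\hat X_{2t-\tau}^{R,R},\hat X_0^{R,R})>\tfrac R2\big)\big]
\le\sup_{y\in V}\sup_{s\in[t,2t]}\Pp_y\big(\rho(\hat X^{R,R}_s,y)>\tfrac R2\big),
\]
i.e.\ bounds the indicator by~$1$, giving directly $\Pp_{x_0}(\tau\le t)\le 2c_3(t/R^\alpha)^{1/2}[1+\log(R^\alpha/t)]$ for $R^{\theta'\alpha}\le t\le R^\alpha/2$ with no fixed-point manoeuvre. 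Both achieve the same result; yours is just a bit more involved. Two minor inaccuracies worth flagging: your remark that \eqref{np1-1}--\eqref{np1-1a} "carry over to $\hat w$" with "$\hat w\le w$ on $B(x_0,R)$" is misstated (in fact $\hat w=w$ whenever one endpoint lies in $B(x_0,R)$, and $\hat w\equiv1$ otherwise), but this is immaterial since Proposition~\ref{np-1} is already stated for $w$ and yields the bound for $\hat X^{R,R}$ directly; and in the conservativeness step the limits $\varepsilon\downarrow0$, $R\uparrow r_G$ must be taken in the order $R$ first (for fixed $\varepsilon$, using $R>R_2(\varepsilon)$) and then $\varepsilon\to0$.
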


To prove Proposition \ref{np-2}, we will make use of Proposition
\ref{np-1}.
We adopt notations from Subsection \ref{Subsection2.2}.
Fix $x_0\in V$ and $R\ge1$. According to the definition of $(\hat
D^{x_0,R},\hat \F^{x_0, R})$, we have
\begin{equation}\label{ne2-1}
\Pp_{x_0}\big(\tau_{B(x_0,R)}\le
t\big)=\Pp_{x_0}\big(\hat\tau^{R}_{B(x_0,R)}\le
t\big),
\end{equation}
where $\tau_A:=\inf\{t>0: X_t\notin A\}$ and $\hat \tau^{
R}_A:=\inf\{t\ge0: \hat X_t^{R}\notin A\}$ for any subset
$A\subseteq V$.
This is, for fixed $x_0\in V$, the distribution of exit time for the
process $X$ exiting from $B(x_0,R)$ is the same as that for the
corresponding localized process $(X_t^{R})_{t\ge0}$ (with parameters
$x_0$ and $R$).

In order to apply Proposition \ref{np-1}
and obtain
$\Pp_{x_0}\big(\hat\tau^{R}_{B(x_0,R)}\le t\big)$, we
now use the truncation idea.
In the following, we denote by $(\hat P_t^{R,B(x_0, R)})_{t\ge 0}$
and $(\hat P_t^{R,R,B(x_0,R)})_{t \ge 0}$ Dirichlet semigroups of
the processes $\hat X^{R}$ and $\hat X^{R,R}$ exiting $B(x_0, R)$,
respectively. Let $\hat\tau^{R,R}_{A}=\inf\{t\ge0: \hat
X_t^{R,R}\notin A\}$ for any $A\subseteq V$.

\begin{lemma}\label{nl-1}
There exists a constant $C_1>0$
such that for every $f\in
L^2(V;\mu)$, $t>0$ and $x\in B(x_0,R)$,
\begin{equation}\label{nl1-1}\begin{split}
|\hat P_t^{R, R,B(x_0,R)}f(x)-&\hat P_t^{
R,B(x_0,R)}f(x)|\le C_1t \left(\sup_{y\in B(x_0,R)}J(y, R)\right)\left(\sup_{z\in B(x_0,R)}|f(z)|\right),\end{split}
\end{equation}
where
\begin{equation}\label{nl-1-00}
J(y,R)=\sum_{z\in V:
\rho(y,z)>R}\frac{ w_{y,z}}{\rho(y,z)^{d+\alpha}}\mu_z,\quad y\in B(x_0,R).
\end{equation}
In particular, it holds that for any $t>0$ and $x\in B(x_0,R)$,
\begin{equation}\label{nl1-2}
\big|\Pp_x\big(\hat\tau_{B(x_0,R)}^{R, R}\le
t\big)-\Pp_x\big(\hat\tau^{R}_{B(x_0,R)}\le t\big)\big|
\le C_1t\sup_{y\in B(x_0,R)}J(y,R).
\end{equation}
\end{lemma}
\begin{proof}
Let $T_{R}^{R}=\inf\{t>0: \rho(\hat X_{t-}^{R},\hat X_t^{R})>R\}$.
By \eqref{e:bound}, $\sup_{y\in V}\sum_{z \in
V:\rho(z,y)>R}\frac{\hat
w_{z,y}}{\rho(z,y)^{d+\alpha}}\mu_z<\infty.$ Then, by Meyer's
construction of $\hat X^{R}$ (see \cite[Section 3.1]{BGK}),
$\hat X_t^{R}$ and $\hat X_t^{R, R}$ enjoy the same distribution
if $t<T_{R}^{R}$.
Hence, for any $f\in
L^2(V;\mu)$,
\begin{align*}
&\big|\hat P_t^{R, R,B(x_0,R)}f(x)-\hat P_t^{R,B(x_0,R)}f(x)\big|\\
&=\big|\Ee_x(f(\hat X_t^{R}):t\le \hat \tau_{B(x_0,
R)}^{R} )- \Ee_x(f(\hat X_t^{R, R}):t\le \hat
\tau_{B(x_0,R)}^{R,R} )\big|\\
&\le \sup_{z\in B(x_0,R)}|f(z)|\Big[\Pp_x\big(T_{R}^{R}\le t \le
\hat\tau_{B(x_0,R)}^{R}\big)+
\Pp_x\big(T_{R}^{R}\le t \le \hat \tau_{B(x_0,R)}^{R,R}\big)\Big]\\
&\le 2\left(\sup_{z\in B(x_0,R)}|f(z)|\right)\Pp_x\big(T_{R}^{R}\le t, \hat X^{
R, R}_{s}\in B(x_0,R)\ \text{for all } s\in [0,T_{R}^{
R}]\big).
\end{align*}
According to \cite[Lemma 3.1(a)]{BGK},
$$\Pp_x\Big(T_{R}^{R}\in dt\big|\mathscr{F}^{\hat X^{R, R}}\Big)=
\hat J(\hat X_t^{R, R},R)\exp\left(-\int_0^t \hat J(\hat
X_s^{R, R},R)\,ds\right)\,dt,$$ where $\F^{\hat X^{
R,R}}$ denotes the $\sigma$-algebra generated by $\hat X^{
R,R}$, and
$$\hat J(y,R)=\sum_{z\in V: \rho(y,z)>R}\frac{
\hat w_{y,z}}{\rho(y,z)^{d+\alpha}}\mu_z,\quad y\in B(x_0,R).$$ In
particular, by the definition of $\hat w_{x,y}$, $J(y,R)=\hat
J(y,R)$ for all $y\in B(x_0,R).$ Therefore,
\begin{align*}
&\Pp_x\Big(T_{R}^{ R}\le t,  \hat X^{R, R}_{s}\in
B(x_0,  R)\ \text{for all }
s\in [0,T_{R}^{ R}]\Big)\\
&\le \Ee_x\left[\int_0^t J(\hat X_r^{ R,
R},R)\exp\left(-\int_0^r J(\hat X_s^{ R, R},R)\,ds
\right)\I_{\{\hat X_s^{R, R}\in B(x_0, R)\text{ for all } s\in [0,r]\}}\,dr\right]\\
&\le c_1t\sup_{y\in B(x_0, R)}J(y,R).
\end{align*}
Combining all the estimates above, we can obtain \eqref{nl1-1}.
\eqref{nl1-2} is a direct consequence of \eqref{nl1-1} by taking
$f\equiv1$ on $B(x_0,R)$.
\end{proof}

\begin{proof}[Proof of Proposition $\ref{np-2}$]
{\bf Step (1):} It immediately follows from  \eqref{np2-1a} that
\begin{equation}\label{np2-3}
\sup_{y\in B(x_0, R)}J(y,R)\le c_1R^{-\alpha},
\end{equation}
where $J(y,R)$ is defined by \eqref{nl-1-00}.

Since \eqref{np1-1}, \eqref{np1-1b} and \eqref{np1-1a} are true,
by \eqref{np1-3}, for any $\theta' \in (\theta,1)$, there
is a constant $\tilde R_1\ge 1$ such that for all $\tilde R_1<R<r_G$ and $x\in V$,
$$
\Ee_x\big[\rho(\hat X_t^{R,R},x)\big]\le
c_2R\Big(\frac{t}{R^{\alpha}}\Big)^{1/2}\left[1+\log\left(\frac{R^{\alpha}}{t}\right)
\right],\quad \forall \
R^{\theta'\alpha}\le t \le R^{\alpha}.
$$
Hence, by the Markov inequality, for all $x\in V$, $\tilde R_1<R<r_G$ and
$R^{\theta'\alpha}\le t \le R^{\alpha}/2$,
$$
\sup_{s \in [t,2t]}\Pp_x\Big( \rho\big(\hat X_{s}^{
R,R},x\big)>\frac{R}{2} \Big)\le
c_3\Big(\frac{t}{R^{\alpha}}\Big)^{1/2}\left[1+\log\left(\frac{R^{\alpha}}{t}\right)
\right].
$$
Therefore, for all $\tilde R_1<R<r_G$ and $R^{\theta'\alpha}\le t \le R^{\alpha}/2$,
\begin{align*}
\Pp_{x_0}\big(\hat\tau_{B(x_0,R)}^{R, R}\le t\big)
&\le  \Pp_{x_0}\Big(\hat\tau_{B(x_0,R)}^{R,R}\le t;
\rho\big(\hat X_{2t}^{R, R},x_0\big)\le \frac{R}{2} \Big)
+\Pp_{x_0}\Big(\rho\big(\hat X_{2t}^{R, R},x_0\big)> \frac{R}{2} \Big)\\
&\le \Ee_{x_0}\left[\I_{\{\hat \tau^{R, R}_{B(x_0,R)}\le t\}}
\Pp_{\hat X_{\hat\tau_{B(x_0,R)}^{
R,R}}^{R,R}} \Big(\rho\big(\hat X_{2t-\hat \tau^{R,
R}_{B(x_0,R)}}^{ R, R},\hat X_0^{
R,R}\big)>\frac{R}{2}\Big)\right]\\
&\quad +c_3\Big(\frac{t}{R^{\alpha}}\Big)^{1/2}\left[1+\log\left(\frac{R^{\alpha}}{t}\right)
\right]\\
&\le \sup_{y \in V}\sup_{s \in [t,2t]}\Pp_{y}\Big(\rho\big(\hat
X_{s}^{R,R},y\big)
>\frac{R}{2}\Big)+c_3\Big(\frac{t}{R^{\alpha}}\Big)^{1/2}
\left[1+\log\left(\frac{R^{\alpha}}{t}\right)
\right]\\
&\le
2c_3\Big(\frac{t}{R^{\alpha}}\Big)^{1/2}\left[1+\log\left(\frac{R^{\alpha}}{t}\right)
\right].
\end{align*}

Combining this with \eqref{ne2-1}, \eqref{nl1-2} and \eqref{np2-3}
yields that for all $\tilde R_1<R<r_G$ and $R^{\theta'\alpha}\le t \le
R^{\alpha}/2$,
$$
\Pp_{x_0}\big(\tau_{B(x_0,R)}\le t\big)\le
2c_3\Big(\frac{t}{R^{\alpha}}\Big)^{1/2}\left[1+\log\left(\frac{R^{\alpha}}{t}\right)
\right]+\frac{c_4
t}{R^{\alpha}}\le
c_5\Big(\frac{t}{R^{\alpha}}\Big)^{1/2}\left[1\vee\log\left(\frac{R^{\alpha}}{t}\right)
\right].
$$
Thus, \eqref{np2-2a} has been verified for all $R^{\theta'\alpha}\le
t \le R^{\alpha}/2$. When $t>R^{\alpha}/2$, it
holds that
$$
\Pp_{x_0}\big(\tau_{B(x_0,R)}\le t\big)\le 1 \le
\Big(\frac{2t}{R^{\alpha}}\Big)^{1/2}\left[1\vee\log\left(\frac{R^{\alpha}}{t}\right)
\right].
$$
Hence we prove \eqref{np2-2a}.

{\bf Step (2):}
Fix $\theta'\in (\theta,1)$.
By \eqref{np2-2a} and Young's inequality, there is a constant $\tilde R_1\ge 1$ such that for every $\tilde
R_1<R<r_G$, $t\ge R^{\theta'\alpha}$ and $\varepsilon>0$,
$
\Pp_{x_0}\big(\tau_{B(x_0, R)}\le t\big)\le
2^{-1}\varepsilon+{c_6(\varepsilon) t}{R^{-\alpha}}.
$
If $0<t\le R^{\theta'\alpha}$, then, taking
$\tilde R_2(\varepsilon)\ge \tilde R_1$ large enough, we obtain that for all $\tilde R_2(\varepsilon)\le R<r_G$,
$\Pp_{x_0}\big(\tau_{B(x_0, R)}\le t\big)\le
\Pp_{x_0}\big(\tau_{B(x_0, R)}\le R^{\theta'\alpha}\big)\le
2^{-1}\varepsilon+c_6(\varepsilon)R^{-(1-\theta')\alpha}\le \varepsilon.
$
Combining both estimates above together, we know that for all  $\tilde R_2(\varepsilon)<R<r_G$ and $t>0$,
$
\Pp_{x_0}\big(\tau_{B(x_0, R)}\le t\big)\le \varepsilon+{c_7(\varepsilon)t}{R^{-\alpha}},
$
which implies that \eqref{np2-2} holds.
\end{proof}

\subsection{Estimates of exit time:
locally uniform with respect to the starting
point}\label{subsection3.2}
For our later use, we need exit time estimates for the process,
which are locally uniform with respect to the starting point. We
first present the following assumption on $\{w_{x,y}:x,y\in V\}$,
which is regarded as the locally uniform version of assumptions in Proposition
\ref{np-2}.
For any $x,z\in V$ and $r>0$, denote $B_z^w(x,r):=\{u\in B(x,r):
w_{u,z}>0\}$. In particular, $B_x^w(x,r)=B^w(x,r)$.

\medskip

  \paragraph{{\bf Assumption (Exi.($\theta$))}}
{\it Suppose that for some fixed $\theta\in (0,1)$ and $0\in V$, there exist constants $R_0\ge 1$, $c_0\in (1/2,1)$ and $C_1,C_2>0$
such that the following hold.
\begin{itemize}
 \item[(i)] For
every $R_0<R<r_G$ and ${R^{\theta}/2}\le r \le 2R$,
\begin{equation}\label{a2-2-1}
\sup_{x\in B(0,6R)}\sum_{y\in V:\rho(x,y)\le  r}
\frac{w_{x,y}}{\rho(x,y)^{d+\alpha-2}}\le C_1 r^{2-\alpha},
\end{equation}
\begin{equation}\label{a2-2-1a}
\mu(B_z^w(x,r))\ge c_0\mu(B(x,r)),\quad \forall x,z\in B(0,6R)
\end{equation}
and
\begin{equation}\label{a2-2-2}
\sup_{x\in B(0,6R)}\sum_{y\in B^w(x,
c_*r)}w_{x,y}^{-1}\le C_1r^{d},
\end{equation}
where $c_*:=8c_G^{2/d}$.
\item[(ii)] For every $R_0<R<r_G$ and $r\ge {R^{\theta}/2}$,
\begin{equation}\label{a2-2-3}
\begin{split}
\sup_{x \in B(0,6R)}\sum_{y \in V:
\rho(x,y)>r}\frac{w_{x,y}}{\rho(x,y)^{d+\alpha}}\le C_1r^{-\alpha}.
\end{split}
\end{equation}
\end{itemize}
}

Then, we have the following statement.

\begin{theorem} \label{exit}
Suppose
that Assumption {\bf (Exi.($\theta$))} holds {with some constant $\theta \in (0,1)$}. Then, for every $\theta'\in (\theta,1)$, there exist
constants $R_1\ge1$, {$\delta \in (\theta,1)$} and $C_0,C_1,C_2>0$
such that for all
$R_1<R<r_G/(2c_*)$ and $ R^{\delta}\le r \le R$,
\begin{itemize}
\item[(1)]
\begin{equation}\label{l2-2-1a}
\sup_{x \in B(0,2R)}\Pp_x\big(\tau_{B(x,r)}\le C_0r^{\alpha}\big)\le
\frac{1}{4}.
\end{equation}
\item[(2)]
\begin{equation}\label{l2-2-0a}
\begin{split}
\sup_{x \in B(0,2R)}\Pp_x\big(\tau_{B(x,r)}\le t \big)\le
C_1\Big(\frac{ t}{r^{\alpha}}\Big)^{1/2}\Big[1\vee
\log\Big(\frac{r^{\alpha}}{t}\Big)\Big],\quad \forall\ t\ge
r^{\theta'\alpha},
\end{split}
\end{equation}
and
\begin{equation}\label{l2-2-1}
\begin{split}
C_2r^{\alpha}\le \inf_{x \in B(0,2R)}\Ee_x\big[\tau_{B(x,r)}\big]
\le \sup_{x \in B(0,2R)}\Ee_x\big[\tau_{B(x,r)}\big]\le
C_1r^{\alpha}.
\end{split}
\end{equation}
\end{itemize}
\end{theorem}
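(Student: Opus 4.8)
The plan is to derive Theorem~\ref{exit} from Proposition~\ref{np-2} and Lemma~\ref{l2-3} applied with a \emph{moving centre}: for each $x\in B(0,2R)$ and each radius $r$ with $R^{\delta}\le r\le R$ we apply Proposition~\ref{np-2} with the point $x_0$ there replaced by $x$, the radius ``$R$'' there replaced by $r$, and the exponent ``$\theta$'' there replaced by an auxiliary $\theta_1\in(0,1)$. Since the constants in Proposition~\ref{np-2} and Lemma~\ref{l2-3} do not depend on the centre, while Assumption~{\bf (Exi.)} furnishes estimates that are uniform over $B(0,6R)\supseteq B(x,3r)$, this produces all the desired conclusions uniformly in $x\in B(0,2R)$. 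Given $\theta'\in(\theta,1)$, I would first fix $\delta\in(\theta/\theta',1)$ and then $\theta_1\in(\theta/\delta,\theta')$; since $\theta'<1$ one has $\theta/\theta'>\theta$, so automatically $\delta\in(\theta,1)$ as the statement requires, $\theta_1\in(\theta,1)$, $\theta_1<\theta'$, and $\delta\theta_1>\theta$ strictly.

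\textbf{Step 1 (the crux): turning Assumption~{\bf (Exi.)} into the hypotheses of Proposition~\ref{np-2} and Lemma~\ref{l2-3}.} Fix $x\in B(0,2R)$ and $R^{\delta}\le r\le R$, so that $B(x,3r)\subseteq B(0,6R)$. For $s$ in the ``inner radius'' range $[r^{\theta_1},r]$ demanded by Proposition~\ref{np-2} one has $s\ge r^{\theta_1}\ge R^{\delta\theta_1}$, and since $\delta\theta_1>\theta$ strictly this exceeds $\max\{c_*R^{\theta}/2,\,2c_*R_0,\,R_0\}$ once $R$ is large enough (the constant $c_*=8c_G^{2/d}$ being absorbed precisely by the strict inequality $\delta\theta_1>\theta$), while $s\le r\le R\le 2c_*R$. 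Hence Assumption~{\bf (Exi.)}(i) applies to each such $s$: \eqref{a2-2-1} gives \eqref{np1-1}, \eqref{a2-2-1a} with $z=x$ together with the $d$-set bound \eqref{al2-1} gives \eqref{np1-1b}, and \eqref{a2-2-2} evaluated at radius $s/c_*$ gives \eqref{np1-1a}; moreover \eqref{np2-1a} at radius $r$ follows from Assumption~{\bf (Exi.)}(ii) \eqref{a2-2-3} because $r\ge R^{\delta}\ge R^{\theta}/2$. Finally Lemma~\ref{l2-3}, available for $R_1<R<r_G/(2c_*)$ and radii in $[R^{\theta}/2,2R]$, applied at radius $2r/3$ and combined with $\mu_y\ge c_M^{-1}$ from \eqref{al2-0}, gives
\begin{equation*}
\inf_{z\in B(0,6R)}\sum_{y\in V:\rho(z,y)>2r}\frac{w_{z,y}}{\rho(z,y)^{d+\alpha}}\mu_y\ \ge\ c_M^{-1}C_2\bigl(\tfrac{2r}{3}\bigr)^{-\alpha}\ \ge\ \lambda:=c\,r^{-\alpha}.
\end{equation*}

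\textbf{Steps 2--3 (the three estimates).} By Step~1, Proposition~\ref{np-2}(ii) applies with $x_0=x$ and radius $r$; taking $\varepsilon=1/8$ there yields $\Pp_x(\tau_{B(x,r)}\le t)\le 1/8+C_3(1/8)\,t/r^{\alpha}$ for all $t>0$ once $R$ is large, so with the universal constant $C_0:=(8C_3(1/8))^{-1}$ and $t=C_0r^{\alpha}$ we obtain \eqref{l2-2-1a}. Proposition~\ref{np-2}(i), applicable because $\theta'\in(\theta_1,1)$, yields \eqref{l2-2-0a} directly; both bounds are uniform in $x\in B(0,2R)$ since the constants in Proposition~\ref{np-2} do not depend on the centre. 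The lower bound in \eqref{l2-2-1} follows from \eqref{l2-2-1a}: $\Ee_x[\tau_{B(x,r)}]\ge C_0r^{\alpha}\,\Pp_x(\tau_{B(x,r)}>C_0r^{\alpha})\ge\tfrac34C_0r^{\alpha}$. For the upper bound, note that if $z\in B(x,r)$ and $\rho(z,y)>2r$ then $\rho(x,y)>r$, so every jump of length exceeding $2r$ leaves $B(x,r)$; since $B(x,r)\subseteq B(0,6R)$, the display in Step~1 shows that the jump rate out of $B(x,r)$ from every $z\in B(x,r)$ is at least $\lambda=c\,r^{-\alpha}$. As $B(x,r)$ is finite, the process killed on leaving $B(x,r)$ has killing rate everywhere $\ge\lambda$, whence $\tau_{B(x,r)}$ is stochastically dominated by an $\mathrm{Exp}(\lambda)$ random variable and $\Ee_x[\tau_{B(x,r)}]\le\lambda^{-1}=C_1r^{\alpha}$. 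Taking $R_1$ to be the largest of the finitely many lower thresholds on $R$ that appear above completes the argument.

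I expect the only genuine difficulty to be Step~1: the admissible radius/exponent windows of Proposition~\ref{np-2} and Lemma~\ref{l2-3} must be matched against the single ambient scale $R$ of Assumption~{\bf (Exi.)}, and keeping the constant $c_*$ (which enters through \eqref{a2-2-2}) under control is exactly what forces the constraints $\delta\in(\theta/\theta',1)$ and $\theta_1\in(\theta/\delta,\theta')$ and the strict inequality $\delta\theta_1>\theta$. Once that bookkeeping is done, everything else reduces to routine applications of the estimates proved in the previous sections.
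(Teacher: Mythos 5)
Your proposal is correct and follows essentially the same route as the paper: apply Proposition~\ref{np-2} with the centre moved to each $x\in B(0,2R)$ and the scale replaced by $r$ (so that Assumption {\bf (Exi.)} over $B(0,6R)$ covers all the needed windows), deduce (1) and \eqref{l2-2-0a} from \eqref{np2-2} and \eqref{np2-2a}, obtain the lower bound in \eqref{l2-2-1} from (1), and use Lemma~\ref{l2-3} for the upper bound. The only cosmetic deviations are your slightly more conservative choice of $\delta$ (taking $\delta\theta_1>\theta$ strictly to absorb $c_*$ directly, where the paper takes $\delta=\theta/\theta_1$ and implicitly uses monotonicity of $r\mapsto\sum_{y\in B^w(x,r)}w_{x,y}^{-1}$) and your phrasing of the exit-time upper bound as stochastic domination of $\tau_{B(x,r)}$ by an $\mathrm{Exp}(\lambda)$ variable, where the paper instead computes $\Pp_x\bigl(X_{\tau_{B(x,r)}}\notin B(x,2r)\bigr)\le 1$ via the L\'evy system; both rest on the same jump-rate lower bound from Lemma~\ref{l2-3} and yield $\Ee_x[\tau_{B(x,r)}]\le C_1 r^\alpha$.
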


To prove Theorem \ref{exit}, we begin with the following simple
lemma.

\begin{lemma}\label{l2-3}
Suppose for some constant $\theta\in (0,1)$, \eqref{a2-2-1a} and \eqref{a2-2-2} in Assumption
{\bf (Exi.($\theta$))}{\rm(i)} hold. Then there exists a constant
$C_1>0$, independent of $R_0$,
such that for every $R_0<R<r_G/(2c_*)$
and ${R^{\theta}/2}\le r \le 2R$,
\begin{equation}\label{a2-2-3a}
\inf_{x \in B(0,6R)}\sum_{y \in V: \rho(x,y)>3r}\frac{w_{x,y}}{\rho(x,y)^{d+\alpha}}\ge
C_1r^{-\alpha}.
\end{equation}
Here $c_*$ is the constant in Assumption {\bf (Exi.($\theta$))}{\rm(i)}.
\end{lemma}
\begin{proof}
Noting that {$c_*>4$},
for every $x\in V$ and
$1\le r<r_G/c_*$, we have
\begin{align*}
\sum_{y\in V: 3r<\rho(x,y)\le c_*r, w_{x,y}>0}\mu_y&
\ge
\mu(B^w(x,c_*r))-\mu(B(x,4r))\ge c_0c_G^{-1}(c_*r)^{d}-c_G(4r)^{d}
\ge c_1r^{d},
\end{align*}
where we have used \eqref{al2-1} and \eqref{a2-2-1a}.

On the other hand, for every $R_0<R<r_G/(2c_*)$, $x \in B(0,6R)$ and
${R^{\theta}/2}\le r \le 2R$,
\begin{align*}
\sum_{y\in V: 3r<\rho(x,y)\le c_*r,w_{x,y}>0}\mu_y&\le
\Big(\sum_{y\in B^w(x,c^*r)}w_{x,y}^{-1}\mu_y\Big)^{1/2}
\Big(\sum_{y\in V: 3r<\rho(x,y)\le c_*r}w_{x,y}\mu_y\Big)^{1/2}\\
&\le c_2r^{d/2}\Big(\sum_{y\in V: 3r<\rho(x,y)\le c_*r}w_{x,y}\Big)^{1/2},
\end{align*} where in the first inequality we have applied the Cauchy-Schwarz inequality, and we used \eqref{a2-2-2} in the last inequality.

Combining the previous two estimates together yields the statement that
for every $R_0<R<r_G/(2c_*)$, $x \in B(0,6R)$ and
${R^{\theta}/2}\le r \le 2R$,
$\sum_{y\in V: 3r<\rho(x,y)\le c_*r}w_{x,y}\ge c_3 r^{d},$ and so
$$
\sum_{y\in V: \rho(x,y)>3r}\frac{w_{x,y}}{\rho(x,y)^{d+\alpha}}\!\!\!
\ge \sum_{y\in V: 3r<\rho(x,y)\le c_*r}\frac{w_{x,y}}{\rho(x,y)^{d+\alpha}}\ge (c_*r)^{-d-\alpha}\!\!\!\sum_{y\in V: 3r<\rho(x,y)\le c_*r}w_{x,y}\ge c_4r^{-\alpha}.
$$
Thus, \eqref{a2-2-3a} is proved.
\end{proof}

\begin{proof}[Proof of Theorem $\ref{exit}$]
Suppose that Assumption {\bf(Exi.($\theta$))} holds with some $\theta\in
(0,1)$ and $R_0\ge 1$. Then, for any $\theta<\theta_1<\theta'<1$,
$R_0<R<r_G$ and $R^{\delta}\le s \le R$ with
$\delta=\theta/\theta_1$, we know that \eqref{np1-1}, \eqref{np1-1a}
and \eqref{np2-1a} hold uniformly (that is, they hold with uniform
constants) for every ${s^{\theta_1}}\le r \le s$ and $x_0\in
B(0,2R)$. Hence, according to \eqref{np2-2a} and \eqref{np2-2}, we
obtain that for every $\theta'\in (\theta,1)$, there exists a
constant $R_1\ge R_0$ such that for each $R_1<R<r_G$ and
${R^{\delta}}\le r \le R$, \eqref{l2-2-0a} and
\begin{equation}\label{l2-2-2}
\sup_{x \in B(0,2R)}\Pp_x\big(\tau_{B(x,r)}\le t \big)\le
\frac{1}{8}+\frac{c_1t}{r^{\alpha}}, \quad \ \forall\ t>0
\end{equation} hold true.
In particular, taking $t=(8c_1)^{-1}r^{\alpha}$ in \eqref{l2-2-2},
we get \eqref{l2-2-1a} immediately.

Let $C_0$ be the constant in \eqref{l2-2-1a}. For any $R>R_1$, $x\in
B(0, 2R)$ and ${R^{\delta}}\le r \le R$, we have
\begin{align*} \Ee_x[\tau_{B(x,r)}]
&= \int_0^{\infty} \Pp_x(\tau_{B(x,r)}> s)\,ds\ge  \int_0^{C_0r^{\alpha}} \Pp_x(\tau_{B(x,r)}> s)\,ds\\
&\ge C_0r^{\alpha} \Pp_x(\tau_{B(x,r)}>C_0r^{\alpha})\ge
\frac{3C_0r^{\alpha}}{4}.
\end{align*} This gives us the first inequality in \eqref{l2-2-1}.
On the other hand, let $c_*$ be the constant in Assumption {\bf
(Exi.($\theta$))}{\rm(i)}. By the L\'evy system (see \cite[Appendix
A]{CK08}), for any $R_1<R<r_G/(2c_*)$, $x\in B(0,2R)$ and
${R^{\delta}}\le r \le R$,
\begin{align*}
1&\ge \Pp_x\big(X_{\tau_{B(x,r)}}\notin B(x,2r)\big)=
\Ee_x\left[\int_0^{\tau_{B(x,r)}}\sum_{y\in V:\rho(x,y)>2r}\frac{w_{X_s,y}}{\rho(X_s,y)^{d+\alpha}}\mu_y\,ds\right]\\
&\ge c_M^{-1}\Ee_x\left[\int_0^{\tau_{B(x,r)}}\sum_{y\in V: \rho(y,X_s)> 3r}\frac{w_{X_s,y}}{\rho(X_s,y)^{d+\alpha}}\,ds\right]\\
&\ge c_M^{-1}\left(\inf_{v\in B(0,2R+r)}
\sum_{y\in V: \rho(y,v)> 3r}\frac{w_{v,y}}{\rho(v,y)^{d+\alpha}}\right) \Ee_x[\tau_{B(x,r)}]\ge c_2r^{-\alpha} \Ee_x[\tau_{B(x,r)}], \end{align*} where in the
last inequality we have used \eqref{a2-2-3a}, also thanks to the fact that $\delta=\theta/\theta_1>\theta$.
Thus, we also prove the third inequality in \eqref{l2-2-1}.
\end{proof}

Finally, we remark that, when $\alpha\in(0,1)$
one can obtain the probability estimate \eqref{np2-2} for the exit
time in a more direct way under the following assumption.

 \paragraph{{\bf Assumption (Exi.($\theta$)')}}
 {\it Suppose that for some fixed $\theta\in (0,1)$ and $0\in V,$ there exist constants $R_0\ge 1$ and $C_1>0$
 such that
\begin{itemize}
 \item[(i)]  for
every $R_0<R<r_G$ and ${R^{\theta}/2}\le r\le 2R$, \begin{equation}\label{l2-1-0}
\sup_{x\in B(0,6R)}\sum_{y\in V:\rho(x,y)\le  r}
\frac{w_{x,y}}{\rho(x,y)^{d+\alpha-1}}\le C_1 r^{1-\alpha}
\end{equation} and \eqref{a2-2-2} hold.
\item[(ii)] ${\rm(ii)}$ in Assumption {\bf(Exi.($\theta$))} is satisfied.
\end{itemize}
}

\begin{proposition}\label{L:tight}
Under \eqref{l2-1-0} and ${\rm(ii)}$ in Assumption {\bf(Exi.($\theta$))},
there exist constants $R_1>R_0$ and
$C_1>0$,
which are independent of $R_0$,
such that for all $R_1<R<r_G$, $x\in B(0,2R)$, ${R^{\theta}}\le r\le R$ and $t>0$,
\begin{equation}\label{l2-1-1}
\Pp_x(\tau_{B(x,r)}\le t)\le \frac{
C_1t}{r^{\alpha}}.
\end{equation}
\end{proposition}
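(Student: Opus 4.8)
The plan is to sidestep the heat-kernel and Bass--Nash machinery of Theorem \ref{exit} and argue directly, exploiting that when $\alpha<1$ the first moment of the small jumps is controlled by \eqref{l2-1-0}. Fix $x\in B(0,2R)$ and $R^\theta\le r\le R$, write $\tau:=\tau_{B(x,r)}$, and take the bounded Lipschitz test function
\[
f(y)=\rho(x,y)\wedge r,\qquad y\in V,
\]
so that $f(x)=0$, $0\le f\le r$ on $V$, and $|f(z)-f(y)|\le\min\{\rho(y,z),r\}$ for all $y,z\in V$ (the truncation at level $r$ is $1$-Lipschitz).

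First I would bound $Lf$ on $B(x,r)$. Since $x\in B(0,2R)$ and $r\le R$, every $y\in B(x,r)$ lies in $B(0,3R)\subset B(0,6R)$; splitting the jump kernel at radius $r$ therefore gives, for such $y$,
\[
|Lf(y)|\le\sum_{z:\rho(y,z)\le r}\rho(y,z)\frac{w_{y,z}}{\rho(y,z)^{d+\alpha}}\mu_z+r\sum_{z:\rho(y,z)>r}\frac{w_{y,z}}{\rho(y,z)^{d+\alpha}}\mu_z.
\]
The first sum equals $\sum_{z:\rho(y,z)\le r}\frac{w_{y,z}}{\rho(y,z)^{d+\alpha-1}}\mu_z\le c_M C_1 r^{1-\alpha}$ by \eqref{l2-1-0}, and the second is $\le c_M C_1 r^{1-\alpha}$ by \eqref{a2-2-3} (part (ii) of Assumption {\bf (Exi.)}); hence $|Lf(y)|\le c_1 r^{1-\alpha}$ for all $y\in B(x,r)$, with $c_1$ independent of $R_0,R,r,x,t$ and $r_G$ (this also confirms that $Lf(y)$ is absolutely convergent there). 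This is the one place where $\alpha<1$ --- equivalently, the availability of \eqref{l2-1-0} --- enters, and it is precisely what the finer arguments of Theorem \ref{exit} must replace when $\alpha\ge1$.

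Next I would apply Dynkin's formula to the process run up to $\tau$: since $B(x,r)$ is a finite set the process cannot explode before $\tau$, $f$ is bounded, and $Lf$ is bounded along the stopped trajectory, so $M_t:=f(X_{t\wedge\tau})-f(x)-\int_0^{t\wedge\tau}Lf(X_s)\,ds$ is a martingale, whence
\[
\Ee_x\big[f(X_{t\wedge\tau})\big]=\Ee_x\Big[\int_0^{t\wedge\tau}Lf(X_s)\,ds\Big]\le c_1 r^{1-\alpha}\,\Ee_x[t\wedge\tau]\le c_1 r^{1-\alpha}t.
\]
For the matching lower bound, note that $X$ is a pure-jump process, so it leaves $B(x,r)$ by a jump and $\rho(x,X_\tau)>r$; hence on $\{\tau\le t\}$ one has $f(X_{t\wedge\tau})=f(X_\tau)=r$, and as $f\ge0$ everywhere this yields $\Ee_x[f(X_{t\wedge\tau})]\ge r\,\Pp_x(\tau\le t)$. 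Combining the two displays gives $\Pp_x(\tau\le t)\le c_1 r^{-\alpha}t$, i.e.\ \eqref{l2-1-1} with $C_2=c_1$; since the hypotheses used hold for all $R_0<R<r_G$, $x\in B(0,2R)$, $R^\theta\le r\le R$ and $t>0$, the conclusion follows with, say, $R_1=R_0+1$.

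The main (and essentially only) difficulty is bookkeeping: ensuring that the globally defined $f$ is probed by the jump kernel only at points $y\in B(x,r)\subset B(0,6R)$ where \eqref{l2-1-0} and \eqref{a2-2-3} are valid, and that the martingale and optional-stopping steps are legitimate (boundedness of $f$ and of $Lf$ along the stopped trajectory, and non-explosion on the finite set $B(x,r)$). Once $|Lf|\le c_1 r^{1-\alpha}$ on $B(x,r)$ has been established, the remainder is the standard Dynkin estimate.
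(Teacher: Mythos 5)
Your argument is correct and is essentially the paper's own proof: both apply the Dynkin/martingale formula to a cut-off of $\rho(x,\cdot)$ vanishing at $x$ and identically maximal outside $B(x,r)$, bound $Lf$ on $B(x,r)$ by splitting the kernel at scale $r$ and invoking \eqref{l2-1-0} for the near part and \eqref{a2-2-3} for the far part, and then read off the exit-probability bound. The only cosmetic difference is that you take the piecewise-linear test function $\rho(x,\cdot)\wedge r$ directly, whereas the paper uses $f(\rho(x,\cdot)/r)$ for a smooth $C_b^1$ profile $f$ normalized to $[0,1]$; the two choices yield identical estimates up to the constant.
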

\begin{proof} Fix $x\in B(0,2R)$.
Given $f\in C_b^1([0,\infty))$ with $f(0)=0$ and $f(u)=1$ for all
$u\ge1$, we set
$f_{x,r}(z)=f\left(\frac{\rho(z,x)}{r}\right)$ for any $ z\in V$ and $ r>0.$
For any $r>0$,
$$\left\{f_{x,r}(X_t)-f_{x,r}(X_0)-\int_0^t Lf_{x,r}(X_s)\,ds, t\ge0\right\} $$ is a local martingale.
Then, for any $t>0$ and $x \in V$,
\begin{align*}\Pp_x(\tau_{B(x,r)}\le t)\le
&\Ee_x f_{x,r}(X_{t\wedge \tau_{B(x,r)}})\!=\!\Ee_x\left[\int_0^{t\wedge \tau_{B(x,r)}}Lf_{x,r}(X_s)\,ds\right]\le t\sup_{z\in B(x,r)}Lf_{x,r}(z),\end{align*} where we used the
fact that $f_{x,r}(x)=0$ in the equality above.

Furthermore, for any $x \in V$ and $z\in B(x,r)$,
\begin{align*} L f_{x,r}(z)=&\sum_{y\in V} \big(f_{x,r}(y)-f_{x,r}(z)\big)\frac{w_{y,z}}{\rho(z,y)^{d+\alpha}}\mu_y\\
=&\sum_{y\in V: \rho(y,z)\le r} \left(f_{x,r}(y)-f_{x,r}(z)\right) \frac{w_{y,z}}{\rho(y,z)^{d+\alpha}}\mu_y\\
&+ \sum_{y\in V: \rho(y,z)> r} \left(f_{x,r}(y)-f_{x,r}(z)\right)\frac{w_{y,z}}{\rho(y,z)^{d+\alpha}}\mu_y\\
\le&c_1\left(r^{-1}\sum_{y\in V: \rho(z,y)\le
r}\frac{w_{y,z}}{\rho(y,z)^{d+\alpha-1}}
+\sum_{y \in V: \rho(z,y)>r}\frac{w_{y,z}}{\rho(y,z)^{d+\alpha}}\right)=:c_1(I_1(z,r)+I_2(z,r)),
\end{align*}
where in the first inequality above we have used
$|f_{x,r}(y)-f_{x,r}(z)|\le c_1r^{-1}\rho(y,z).$
According to \eqref{l2-1-0} and
\eqref{a2-2-3}, we can find a constant $R_1\ge1$ such that for all
$R_1<R<r_G$, $x \in B(0,2R)$ and $R^{\theta}\le r \le R$,
$
\sup_{z \in B(x,r)}\big(I_1(z,r)+I_2(z,r)\big)\le c_2r^{-\alpha}.
$

Combining with all estimates above, we prove the desired assertion.
\end{proof}

\subsection{H\"older regularity}\label{subsection3.3}
Let $\R_+:=[0,\infty)$
 and  $Z:=(Z_t)_{t\ge0}=(U_t,X_t)_{t\ge0}$ be
the time-space process such that $U_t=U_0+t$ for any $t\ge0$. Denote
by $\Pp_{(s,x)}$ the probability of the process $Z$ starting from
$(s,x)\in \R_+\times V$. For any subset $A\subseteq \R_+\times V$,
define $\tau_A=\inf\{s>0:Z_s\in A\}$ and $\sigma_A=\inf\{s>0:Z_s\in
A\}$ (for simplicity of notation we still use $\tau_A$ to denote the exit time
from $A\subset \R_+\times V$ for process $(Z_t)_{t\ge 0}$).
We say that a measurable function $q(t,x)$ on $\R_+\times V$
is caloric in an open set $A\subseteq \R_+\times V$, if for
every relatively compact open subset $A_1$ of $A$,
$q(t,x)=\Ee_{(t,x)}q(Z_{\tau_{A_1}})$ for every $(t,x)\in A_1$.

For any $t\ge0$, $x\in V$ and $R\ge1$, let $Q(t,x,R)=\left(t, t+
C_0R^{\alpha}\right)\times B(x,R)$ and $ d\nu=ds\times d\mu$, where
$C_0$ is the constant in \eqref{l2-2-1a}. In the following, let
$c_*$ and $\theta$ be the constants in Assumption {\bf
(Exi.($\theta$))}{\rm(i)}.
The main result in this part is the following theorem.
 \begin{theorem}\label{T:holder}
Suppose that Assumption {\bf (Exi.($\theta$))} holds {with some $\theta \in
(0,1)$}.
Then, there exist constants $R_1\ge1$, {$\delta
\in (\theta,1)$}, $\beta\in (0,1)$ and $C_1>0$,
independent of  $R_0$, $R_1$ and $x_0$ such that for every $R_1<R<r_G/(2c_*)$, $x_0 \in B(0,R)$, ${R^{\delta}}\le r \le R$, $t_0\ge0$ and caloric
  function $q$
 on $Q(t_0,x_0, 2r)$,
 \begin{equation}\label{t3-2-1}
 |q(s,x)-q(t,y)|\le C_1\|q\|_{\infty,r}\left(\frac{|t-s|^{1/\alpha}+\rho(x,y)}{r} \right)^\beta,
 \end{equation}
 holds for all $(s,x),(t,y)\in Q(t_0,x_0, r)$ such that
 $(C_0^{-1}|s-t|)^{1/\alpha}+\rho(x,y)\ge {2r^{\delta}}$, where
 $\|q\|_{\infty,r}=\sup_{(s,x)\in [t_0, t_0+C_0(2r)^\alpha]\times V}q(s,x)$.
\end{theorem}
\begin{remark}\label{remhold}
Note that unlike the case of random walks
on the supercritical
percolation cluster (\cite[Proposition 3.2]{BaHa}), in which the
H\"older regularity holds for all points in the parabolic cylinder
when $r$ is large enough, in the preset setting we can only obtain
the H\"older regularity in the region
$(C_0^{-1}|s-t|)^{1/\alpha}+\rho(x,y)\ge 2r^{\delta}$ inside the
cylinder.
\end{remark}

To prove Theorem \ref{T:holder}, we need the following Krylov-type
estimates.
\begin{proposition}\label{Kr}
If Assumption {\bf (Exi.($\theta$))}
holds with some $\theta\in (0,1)$, then
there exist constants  $R_1\ge1$, {$\delta \in (\theta,1)$} and $C_1>0$,
independent of $R_0$ and $R_1$
such that for any $R_1<R<r_G/(2c_*)$, $2R^{\delta}\le r \le R$, $x\in
B(0,2R)$, $t\ge0$ and $A\subseteq Q(t,x,r/2)$ with
$\frac{\nu(A)}{\nu(Q(t,x,r/2))}\ge {1}/{2}$, it holds that
\begin{equation}\label{p2-2-1a}
\Pp_{(t,x)}(\sigma_A<\tau_{Q(t,x,r)})\ge C_1.
\end{equation}
\end{proposition}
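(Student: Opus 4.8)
The plan is to exploit the strong non-degeneracy built into Assumption {\bf (Exi.)}(i) --- in particular $c_0>1/2$ in \eqref{a2-2-1a} --- which substitutes for the near-diagonal heat kernel lower bounds (equivalently, Harnack-type inequalities) that are unavailable in this degenerate setting. I would take $\delta\in(\theta,1)$ and $R_1\ge1$ as in Theorem \ref{exit}, enlarging $R_1$ so that all the balls and radii appearing below fall into the admissible ranges of Assumption {\bf (Exi.)} and of Theorem \ref{exit} (e.g.\ $R^{\theta}\le 2R^{\delta}\le r$ since $\delta>\theta$). Write $s_0:=C_0(r/2)^\alpha$, $Q_0:=Q(t,x,r/2)$, and for $u\in(t,t+s_0)$ set $A_u:=\{y:(u,y)\in A\}$. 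Since the time-projection of $A\subseteq Q_0$ lies in the \emph{open} interval $(t,t+s_0)$, the space-time process started from $(t,x)$ has $(t,x)\notin A$, so $\sigma_A>0$, and moreover $\{\sigma_A<\tau_{Q(t,x,r)}\}=\{\sigma_A<\tau_{B(x,r)}\}$, where $\tau_{B(x,r)}$ is the exit time of $X$ from $B(x,r)$ under $\Pp_x$ (recall $\tau_{Q(t,x,r)}=\tau_{B(x,r)}\wedge C_0r^\alpha$ and $s_0<C_0r^\alpha$). Thus it suffices to bound $\Pp_x(\sigma_A<\tau_{B(x,r)})$ from below.

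First I extract good time slices. From $\nu(A)\ge\tfrac12\nu(Q_0)=\tfrac12 s_0\,\mu(B(x,r/2))$ and $\mu(A_u)\le\mu(B(x,r/2))$, a reverse-Markov estimate gives, for any fixed $\lambda\in(0,1/2)$, that $\mathcal G:=\{u\in(t,t+s_0):\mu(A_u)\ge\lambda\,\mu(B(x,r/2))\}$ has Lebesgue measure $|\mathcal G|\ge\frac{1/2-\lambda}{1-\lambda}\,s_0$. The role of $c_0>1/2$ is that one may choose $\lambda\in(1-c_0,\tfrac12)$, so that $\kappa:=\lambda-(1-c_0)>0$ while still $|\mathcal G|\ge c'' r^\alpha$ for a structural constant $c''>0$. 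The key claim is then a uniform lower bound on the Lévy-system jump rate into the fat slices: for every $v\in B(x,r)$ and every $u\in\mathcal G$,
\[
\sum_{y\in A_u}\frac{w_{v,y}}{\rho(v,y)^{d+\alpha}}\,\mu_y\ \ge\ c'\,r^{-\alpha},
\]
with $c'>0$ structural. Indeed, $A_u\subseteq B(x,r/2)\subseteq B(v,3r/2)$ forces $\rho(v,y)\le 3r/2$ on $A_u$; \eqref{a2-2-1a} (centre $x$, radius $r/2$, partner $v$) gives $\mu\big(A_u\cap\{y:w_{y,v}>0\}\big)\ge\mu(A_u)-(1-c_0)\mu(B(x,r/2))\ge\kappa\,\mu(B(x,r/2))\gtrsim r^d$; since $3r/2\le c_*r$ (as $c_*\ge8$), this set lies in $B^w(v,c_*r)$, so \eqref{a2-2-2} bounds $\sum_y w_{v,y}^{-1}\mu_y$ over it by $\lesssim r^d$, and Cauchy--Schwarz yields $\sum_{y\in A_u}w_{v,y}\mu_y\gtrsim r^d$; dividing by $(3r/2)^{d+\alpha}$ gives the displayed bound.

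With these two ingredients I run an exponential (Dynkin) argument on the killing time $\sigma_A$. Put $a:=c'r^{-\alpha}$ and $L_s:=\exp\!\big(a\int_0^{s\wedge\tau_{B(x,r)}}\I_{\mathcal G}(t+u)\,du\big)$. On $\{\sigma_A>s\}$ one has $X_s\notin A_{t+s}$, hence if also $s<\tau_{B(x,r)}$ the jump rate of $X$ into $A_{t+s}$ is $\ge a\,\I_{\mathcal G}(t+s)$ by the claim; as the space-time process may in addition enter $A$ continuously in time (which only helps), $\I_{\{\sigma_A>s\}}L_s$ is a bounded supermartingale, so $\Ee_x\big[\I_{\{\sigma_A=\infty\}}\,e^{aW}\big]\le1$ with $W:=\int_0^{\tau_{B(x,r)}\wedge s_0}\I_{\mathcal G}(t+u)\,du$ (using $\tau_{Q(t,x,r)}\wedge s_0=\tau_{B(x,r)}\wedge s_0$). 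Applying $e^{aW}\ge1+aW$ and $0\le W\le|\mathcal G|$ rearranges this to $\Pp_x(\sigma_A<\tau_{B(x,r)})\,(1+a|\mathcal G|)\ge a\,\Ee_x[W]$. Finally, Theorem \ref{exit}(1) at radius $r$ gives $\Pp_x(u<\tau_{B(x,r)})\ge3/4$ for all $u\le s_0$, so $\Ee_x[W]\ge\tfrac34|\mathcal G|$, while $a|\mathcal G|\le a s_0=c'C_02^{-\alpha}$; hence
\[
\Pp_x\big(\sigma_A<\tau_{B(x,r)}\big)\ \ge\ \frac{\tfrac34\,a\,|\mathcal G|}{1+c'C_02^{-\alpha}}\ \ge\ \frac{\tfrac34\,c'\,c''}{1+c'C_02^{-\alpha}}\ =:\ C_1\ >\ 0 ,
\]
depending only on structural constants, as required. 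The main obstacle is the jump-rate claim: it is precisely there that $c_0>1/2$, the weighted control \eqref{a2-2-2}, and the choice $c_*\ge8$ conspire to replace the missing pointwise heat-kernel lower bound. The remainder is a careful Dynkin/Gronwall bookkeeping, and the one subtlety is to keep the final constant genuinely positive --- which is why one uses $e^x\ge1+x$ together with the exit-time lower bound rather than a crude exponential estimate.
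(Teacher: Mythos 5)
Your proposal is correct in substance and takes a genuinely different route to the hitting-probability bound than the paper does, though the underlying use of the structural hypotheses is the same.

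The paper's proof is shorter: it assumes WLOG that $\Pp_{(t,x)}(\sigma_A<\tau_{Q_r})\le 1/4$, writes the hitting probability via the L\'evy system as $\Ee_{(t,x)}\big[\int_0^{T}\sum_{u\in A_s}w_{X_s,u}\rho(X_s,u)^{-d-\alpha}\mu_u\,ds\big]$ with $T=\sigma_A\wedge\tau_{Q_r}$, restricts to $\{T\ge C_0(r/2)^\alpha\}$ (which has probability $\ge 1/2$ by the WLOG assumption and Theorem \ref{exit}(1)), and then bounds the time-integrated jump rate $\int_0^{C_0(r/2)^\alpha}\sum_{u\in A_s}w_{z,u}\,ds\gtrsim r^{d+\alpha}$ for each $z\in B(x,r)$ via exactly the $c_0>1/2$/\eqref{a2-2-1a}/\eqref{a2-2-2}/Cauchy--Schwarz mechanism you use. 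Your Dynkin-exponential argument replaces the ``WLOG $\le 1/4$'' trick with an honest martingale bound, and your extraction of good time slices $\mathcal G$ (via a reverse Markov estimate) is in fact a slight refinement: it yields a pointwise-in-time jump-rate lower bound $\inf_{v\in B(x,r)}\sum_{y\in A_u}w_{v,y}\rho(v,y)^{-d-\alpha}\mu_y\gtrsim r^{-\alpha}$ for $u\in\mathcal G$, whereas the paper commutes an infimum over $z\in B(x,r)$ past the time integral, which is not quite a valid step as written and implicitly needs something like your time-slice decomposition. So your more careful bookkeeping pays off here. Your jump-rate claim is correct and your range-checking is fine: $v\in B(0,3R)\subseteq B(0,6R)$, $r/2\ge R^\theta/2$, $3r/2\le c_*r$, and \eqref{l2-2-1a} applies for $u\le s_0\le C_0r^\alpha$.

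There is one genuine (but easily repaired) slip in the final step. You observe correctly that $\{\sigma_A<\tau_{Q(t,x,r)}\}=\{\sigma_A<\tau_{B(x,r)}\}$, but the supermartingale $\I_{\{\sigma_A>s\}}L_s$ you construct, while indeed a supermartingale, yields $\Ee_x\big[\I_{\{\sigma_A=\infty\}}e^{aW}\big]\le 1$, which controls $\Pp_x(\sigma_A<\infty)$, not $\Pp_x(\sigma_A<\tau_{B(x,r)})$; the process can exit $B(x,r)$, re-enter, and then hit $A$, and those paths are counted by $\{\sigma_A<\infty\}$ but not by $\{\sigma_A<\tau_{B(x,r)}\}$. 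The asserted rearrangement ``$\Pp_x(\sigma_A<\tau_{B(x,r)})(1+a|\mathcal G|)\ge a\,\Ee_x[W]$'' therefore does not follow from what you proved. The fix is to stop the supermartingale at $\tau_{B(x,r)}$ (equivalently, work with $X$ killed at exit from $B(x,r)$): $N_s:=\I_{\{\sigma_A>s\wedge\tau_{B(x,r)}\}}L_{s\wedge\tau_{B(x,r)}}$ is again a supermartingale (a stopped supermartingale), and since $L$ is already frozen after $\tau_{B(x,r)}$, $N_s\to\I_{\{\sigma_A\ge\tau_{B(x,r)}\}}e^{aW}$, giving $\Ee_x\big[\I_{\{\sigma_A\ge\tau_{B(x,r)}\}}e^{aW}\big]\le 1$, which is the inequality your rearrangement actually needs. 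With that correction the rest of your arithmetic ($\Ee_x[W]\ge\tfrac34|\mathcal G|$, $a|\mathcal G|\le c'C_02^{-\alpha}$, and $a|\mathcal G|\ge c'c''$) gives a strictly positive structural constant $C_1$, as required.
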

\begin{proof}
We write $Q_r=Q(t,x,r)$ for simplicity.
Without loss of generality, we may and can assume that $t=0$ and  $\Pp_{(0,x)}(\sigma_A<\tau_{Q_r})\le 1/4$;
otherwise the conclusion holds
trivially. Let $T=\sigma_A\wedge\tau_{Q_r}$ and $A_s=\{y\in V:
(s,y)\in A\}$ for all $s>0$. According to the L\'evy system
(see \cite[Appendix A]{CK08}),
\begin{align*}
\Pp_{(0,x)}(\sigma_A<\tau_{Q_r})&\ge\Ee_{(0,x)}\left(\sum_{s\le T} \I_{\{X_s\neq X_{s-},X_s\in A_s\}}\right)=\Ee_{(0,x)} \left[\int_0^T\sum_{u\in A_s} \frac{w_{X_s,u}}{\rho(X_s,u)^{d+\alpha}}\mu_u\,ds\right]\\
&\ge c_M^{-1}\Ee_{(0,x)} \left[\int_0^{C_0(r/2)^{\alpha}}\sum_{u\in A_s} \frac{w_{X_s,u}}{\rho(X_s,u)^{d+\alpha}}\,ds; T\ge C_0(r/2)^{\alpha}\right]\\
&\ge c_1r^{-d-\alpha}\left( \inf_{z\in B(x,r)}\int_0^{C_0(r/2)^{\alpha}}\sum_{u\in A_s} w_{z,u}\,ds \right)
\Pp_{(0,x)}(T\ge C_0(r/2)^{\alpha}),
\end{align*}
where in the last inequality we have used fact that $\rho(u,z)\le 2r$
for every $u,z\in B(x,r)$.

Furthermore,
according to Theorem \ref{exit}(1),
there exist constants $R_1\ge 1$ and {$\delta \in (\theta,1)$} such that for any $R_1<R<r_G/(2c_*)$,
$R^{\delta} \le r/2 \le R$ and $x \in B(0,2R)$,
\begin{align*}\Pp_{(0,x)}\big(T\ge C_0(r/2)^{\alpha}\big)&=\Pp_{(0,x)}\big(\sigma_A\wedge\tau_{Q_r}\ge C_0(r/2)^{\alpha}\big)\\
 & \ge1- \Pp_{(0,x)}\big(\sigma_A< \tau_{Q_r}\big)-\Pp_{x}\big(\tau_{B(x,r)}\le C_0(r/2)^{\alpha}\big)\ge  1-\frac{1}{4}-\frac{1}{4}\ge \frac{1}{2},\end{align*}
where in the first inequality we have used the fact that
$$
\Pp_{(0,x)}\big(\tau_{Q_r}\le C_0(r/2)^{\alpha}\big)=
\Pp_x\big(\tau_{B(x,r)}\wedge (C_0r^{\alpha})\le  C_0(r/2)^{\alpha}\big)
=\Pp_x\big(\tau_{B(x,r)}\le C_0(r/2)^{\alpha}\big),
$$
and the second inequality follows from \eqref{l2-2-1a}.

On the other hand, let $Q^w_z(t,x,r):=(t, t+C_0r^\alpha)\times B^w_z(x,r)$. Then, for every $R_1<R<r_G$, ${2R^{\delta}} \le r \le R$,
$x\in B(0,2R)$ and $z\in B(x,r)$,
\begin{align*}
\nu(A\cap  Q^w_z(0,x,r/2))&=\int_0^{C_0(r/2)^{\alpha}}\sum_{u \in A_s\cap B^w_z(x,r/2)}\mu_u\, ds\\
&\le \Big(\int_0^{C_0(r/2)^{\alpha}}\sum_{u\in A_s \cap B^w_z(x,r/2)} w_{z,u}^{-1}\mu_u\,ds\Big)^{1/2}
\Big(\int_0^{C_0(r/2)^{\alpha}}\sum_{u\in A_s} w_{z,u}\mu_u\,ds\Big)^{1/2}\\
&\le c_3r^{\alpha/2}\Big(\sum_{u\in B_z^w(x,r)} w_{z,u}^{-1}\Big)^{1/2}
\Big(\int_0^{C_0(r/2)^{\alpha}}\sum_{u\in A_s} w_{z,u}\,ds\Big)^{1/2}\\
&\le c_3r^{\alpha/2}\Big(\sup_{z\in B(0,3R)}\sum_{u\in  B^w(z,2r)}w_{z,u}^{-1}\Big)^{1/2}
\Big(\int_0^{C_0(r/2)^{\alpha}}\sum_{u\in A_s} w_{z,u}\,ds\Big)^{1/2}\\
&\le c_4r^{(d+\alpha)/2}\Big(\int_0^{C_0(r/2)^{\alpha}}\sum_{u\in A_s} w_{z,u}\,ds\Big)^{1/2},
\end{align*}
where in the first inequality we have used the Cauchy-Schwarz
inequality, the third inequality is due to the fact that $B^w_z(x,r)\subset B^w(z,2r)$ for all
$z\in B(x,r)$, and the last inequality follows from
\eqref{a2-2-2}. Note that, by \eqref{a2-2-1a} and the assumption that $\frac{\nu(A)}{\nu(Q(0,x,r/2))}\ge {1}/{2}$, we have
$\nu(A\cap Q_z^w(0,x,r/2))\ge \big(1/2+c_0-1 \big)\cdot \nu(Q(0,x,r/2))\ge c_5r^{d+\alpha}.$
Combining all estimates above yields that for all $R_1<R<r_G$, $2R^{\delta}\le r \le R$, $x\in
B(0,2R)$ and $z \in B(x,r)$,
$
\int_0^{C_0(r/2)^{\alpha}}\sum_{u\in A_s} w_{z,u}\,ds\ge c_6r^{d+\alpha}.
$
According to all the estimates above, we prove the required assertion.
 \end{proof}

We also need the following hitting probability estimate.

\begin{lemma}\label{l3-3}
Suppose that Assumption {\bf (Exi.($\theta$))}\ holds {with some $\theta \in (0,1)$}. Then there are constants $R_1\ge1$, {$\delta\in (\theta,1)$} and $C_1>0$,
independent of $R_0$ and $R_1$
such that for every $R_1<R<r_G/(2c_*)$, {$R^{\delta}\le r
\le R$}, $x\in B(0,2R)$, $K>4r$, $t\ge0$
and $z \in B(x,{r}/{2})$, it holds that
\begin{equation}\label{e:ex}
\begin{split}
\Pp_x(X_{\tau_{Q(t,x,r)}}\notin B(z,K))\le C_1\left(\frac{r}{K}\right)^\alpha.
\end{split}
\end{equation}
\end{lemma}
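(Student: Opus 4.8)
The plan is to exploit that $X$ is a pure-jump process, so that leaving $B(z,K)$ while exiting $Q(t,x,r)$ can only happen through the single jump that takes $X$ out of $B(x,r)$. Write $x_0=0$ and fix $x\in B(0,2R)$, $z\in B(x,r/2)$ and $K>4r$. Since $K>4r$, one has $B(x,r)\subset B(z,3r/2)\subset B(z,K)$; hence, if the exit of $Q(t,x,r)$ is caused by the time coordinate reaching $t+C_0r^\alpha$, then $X$ is still in $B(x,r)$ at that instant and $X_{\tau_{Q(t,x,r)}}\in B(z,K)$. Consequently $\{X_{\tau_{Q(t,x,r)}}\notin B(z,K)\}\subset\{X_{\tau_{B(x,r)}}\notin B(z,K)\}$, and the first step is to bound $\Pp_x(X_{\tau_{B(x,r)}}\notin B(z,K))$.

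For the latter, since $X_{s-}\in B(x,r)\subset B(z,K)$ for every $s\le\tau_{B(x,r)}$, the only jump up to time $\tau_{B(x,r)}$ that can land outside $B(z,K)$ is the exit jump; thus $\I_{\{X_{\tau_{B(x,r)}}\notin B(z,K)\}}\le\sum_{s\le\tau_{B(x,r)}}\I_{\{X_{s-}\neq X_s,\,X_s\notin B(z,K)\}}$, and by the L\'evy system (\cite[Appendix A]{CK08}), exactly as in the proof of Theorem \ref{exit},
\[
\Pp_x\big(X_{\tau_{B(x,r)}}\notin B(z,K)\big)\le \Ee_x\!\left[\int_0^{\tau_{B(x,r)}}\sum_{v\notin B(z,K)}\frac{w_{X_s,v}}{\rho(X_s,v)^{d+\alpha}}\,\mu_v\,ds\right].
\]
If $X_s\in B(x,r)$ and $v\notin B(z,K)$, the triangle inequality gives $\rho(X_s,v)\ge \rho(z,v)-\rho(z,x)-\rho(x,X_s)> K-\tfrac{3r}{2}>\tfrac{K}{2}$, using $r<K/4$; hence the inner sum is at most $c_M\sum_{v:\rho(X_s,v)>K/2}w_{X_s,v}/\rho(X_s,v)^{d+\alpha}$. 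Since $X_s\in B(x,r)\subset B(0,3R)\subset B(0,6R)$, and since (after enlarging $R_1$ if necessary) we may assume $K/2>2r\ge 2R^\delta\ge R^\theta/2$, where $\delta\in(\theta,1)$ is the exponent produced by Theorem \ref{exit}, Assumption \textbf{(Exi.)}(ii), i.e.\ \eqref{a2-2-3}, applies with cutoff $K/2$ and bounds this by $c_1(K/2)^{-\alpha}=c_2K^{-\alpha}$, uniformly in $s$ and in $X_s$.

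It remains to take this deterministic bound out of the integral and invoke Theorem \ref{exit}(2): for the same $\delta$ and $R_1$, every $R_1<R<r_G/(2c_*)$, every $R^\delta\le r\le R$ and every $x\in B(0,2R)$ satisfy $\Ee_x[\tau_{B(x,r)}]\le C_1r^\alpha$. Combining,
\[
\Pp_x\big(X_{\tau_{Q(t,x,r)}}\notin B(z,K)\big)\le c_2K^{-\alpha}\,\Ee_x\big[\tau_{B(x,r)}\big]\le c_3\Big(\frac{r}{K}\Big)^{\alpha},
\]
which is \eqref{e:ex}. The probabilistic input is standard; the only points requiring care are the reduction in the first step (an exit forced by the clock cannot move $X$ out of $B(z,K)$) and the bookkeeping of scales, namely choosing $\delta\in(\theta,1)$ and $R_1$ so that the ranges of validity in Theorem \ref{exit} and in \eqref{a2-2-3} overlap — in particular that $4R^\delta\ge R^\theta$ for all $R>R_1$, which is possible precisely because $\delta>\theta$.
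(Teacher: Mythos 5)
Your proof is correct and follows essentially the same route as the paper's: a L\'evy-system computation to express the overshoot probability as an expectation of an integral up to $\tau_{B(x,r)}$, a triangle-inequality reduction to the tail sum at scale $K/2$, then \eqref{a2-2-3} for the tail and \eqref{l2-2-1} for $\Ee_x[\tau_{B(x,r)}]\le C r^\alpha$. Your opening reduction, spelling out that a clock-forced exit of the parabolic cylinder necessarily leaves $X$ in $B(x,r)\subset B(z,K)$ so that $\{X_{\tau_{Q(t,x,r)}}\notin B(z,K)\}\subset\{X_{\tau_{B(x,r)}}\notin B(z,K)\}$, is a clean explicit justification of a step the paper writes tersely (as an equality involving $\tau_{B(x,r)}$), but it is the same argument.
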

\begin{proof}
According to the L\'evy system, we know that for every $z \in B(x,r/2)$,
\begin{align*}
\Pp_x(X_{\tau_{Q(t,x,r)}}\notin B(z,K))
&=\Ee_x\left[\int_0^{\tau_{B(x,r)}}\sum_{y\notin B(z,K)} \frac{w_{X_s,y}}{\rho(X_s,y)^{d+\alpha}}\mu_y\,ds \right]\\
&\le c_1\sup_{u\in B(x,r)}\left(\sum_{y\in V: \rho(u,y)>K-2r}\frac{w_{u,y}}{\rho(u,y)^{
d+\alpha}}
\right)\Ee_{x}[\tau_{B(x,r)}]\\
&\le c_1\sup_{u \in B(0,2R)}
\left(\sum_{y\in V: \rho(u,y)>K/2}\frac{w_{u,y}}{\rho(u,y)^{d+\alpha}}
\right)\Ee_{x}[\tau_{B(x,r)}].
\end{align*}
Note that $K/2>2r\ge R^{\delta}$
and $R^{\delta}\le r \le R$.
Then, by \eqref{a2-2-3} and \eqref{l2-2-1}, we can find a constant
$R_1\ge1$ such that for all $R_1<R<r_G/(2c_*)$ and $x\in B(0,2R)$,
$$
\sup_{u \in B(0,2R)}
\left(\sum_{y\in V: \rho(u,y)>K/2}\frac{w_{u,y}}{\rho(u,y)^{d+\alpha}}
\right)\le c_2K^{-\alpha}
$$ and $\Ee_{x}[\tau_{B(x,r)}]\le c_3r^{\alpha}.$
Combining with all the estimates above immediately yields \eqref{e:ex}.
\end{proof}

 \begin{proof}[Proof of Theorem $\ref{T:holder}$]
 We mainly follow the argument of \cite[Theorem 4.14]{CK}
 with some modification.
 For simplicity, we assume that  $\|q\|_{\infty, r}=1$ and $q\ge 0$.
Now, we first show that there are constants $\eta\in (0,1)$, $\delta \in (\sqrt{\delta_0},1)$ with
$\delta_0\in (0,1)$
being the constant $\delta$ so that all Theorem \ref{exit}, Proposition \ref{Kr} and
Lemma \ref{l3-3} are available,
$R_1>R_0$ and $\xi\in (0,(1/4)\wedge \eta^{1/\alpha})$ (which are
determined later)  such that for any $R_1<R<r_G/(2c_*)$, ${R^{\delta}}\le r \le
R$, $k\ge 1$ with $\xi^kr\ge {2r^{\delta}}$, and any $(\tilde
t,\tilde x)\in Q(t_0,x_0,r)$ with $x_0\in B(0,R)$ and $t_0\ge0$,
\begin{equation}\label{e:ph1}\sup_{Q(\tilde t,
\tilde x, \xi^k r)} q-\inf_{Q(\tilde t,\tilde x, \xi^k r)} q \le \eta^k.\end{equation}

Let $Q_i=Q(\tilde t,\tilde x, \xi^i r)$ and $B_i=B(\tilde x, \xi^i
r)$. Define $a_i=\inf_{Q_i}q$ and $b_i=\sup_{Q_i}q$. Clearly,
$b_i-a_i\le \eta^i$ for all $i\le 0$. Suppose that $b_i-a_i\le
\eta^i$ for all $i\le k$ with some $k\ge 0$. Choose $z_1, z_2\in
Q_{k+1}$ such that $q(z_1)=b_{k+1}$ and $q(z_2)= a_{k+1}.$ Letting
$z_1=(t_1,x_1)$, we define $ \tilde Q_k=Q(t_1,x_1,\xi^k r),$
$\tilde Q_{k+1}=Q(t_1,x_1,\xi^{k+1} r)$ and
$$A_{k}=\left\{z\in \tilde Q_{k+1}:q(z)\le \frac{a_k+b_k}{2}\right\}.$$
Without of loss of generality, we may and do assume that
$\nu(A_{k})/\nu(\tilde Q_{k+1})\ge 1/2;$ otherwise, we will choose
$1-q$ instead of $q$. We have
\begin{align*}b_{k+1}-a_{k+1} =&q(z_1)-q(z_2)= \Ee_{z_1}[q(Z_{\sigma_{A_k}\wedge\tau_{\tilde Q_k}})]-q(z_2)\\
=&  \Ee_{z_1}\left[q(Z_{\sigma_{A_{k}}\wedge\tau_{\tilde Q_k}})-q(z_2): \sigma_{A_{k}}\le \tau_{\tilde Q_k}\right]\\
&+\Ee_{z_1}\left[q(Z_{\sigma_{A_k}\wedge\tau_{\tilde Q_k}})-q(z_2): \sigma_{A_k}> \tau_{\tilde Q_k}, X_{\tau_{\tilde Q_k}}\in B_{k-1}\right]\\
&+\sum_{i=1}^{\infty}\Ee_{z_1}\left[q(Z_{\sigma_{A_k}\wedge\tau_{\tilde
Q_k}})-q(z_2): \sigma_{A_k}> \tau_{\tilde Q_k},
X_{\tau_{\tilde Q_k}}\in B_{k-i-1}\setminus B_{k-i}\right]\\
=&:I_1+I_2+I_3.   \end{align*}
It is easy to see that
$$I_1\le \left(\frac{a_k+b_k}{2}-a_k\right)\Pp_{z_1}(\sigma_{A_k}\le \tau_{\tilde Q_k})\le \frac{b_k-a_k}{2} p_k\le \frac{\eta^k}{2}p_k
=\eta^{k+1} \eta^{-1} \frac{p_k}{2}$$ and
$I_2\le (b_{k-1}-a_{k-1}) (1-p_k)\le \eta^{k-1}(1-p_k)=
\eta^{k+1}\eta^{-2}(1-p_k),$ where
$p_k:= \Pp_{z_1}(\sigma_{A_k}\le \tau_{\tilde Q_k})=\Pp_{(t_1,x_1)}(\sigma_{A_k}\le \tau_{Q(t_1,x_1,\xi^k
r)}).$ On the other hand, since {$\xi^k r\ge 2r^{\delta}\ge
2R^{\delta_0}$}, $\tilde x \in B(x_1,{\xi^{k+1} r})\subset B(x_1,{\xi^k r}/{2})$ and
$\xi^{k-i}r>4\xi^{k}r$ for $i\ge 1$, we can apply \eqref{e:ex} and
 obtain that
$$
 \Pp_{x_1}(X_{\tau_{\tilde Q_k}}\in B_{k-i-1}\setminus B_{k-i})\le
 \Pp_{x_1}\big(X_{\tau_{Q(t_1,x_1,\xi^k r)}}\in B_{k-i}^c\big)\le
 c_2\left(\frac{\xi^k r}{\xi^{k-i}r}\right)^{\alpha}.
$$
Thus,
\begin{align*}I_3&\le \sum_{i=1}^{\infty}(b_{k-i-1}-a_{k-i-1}) \Pp_{x_1}(X_{\tau_{\tilde Q_k}}\in B_{k-i-1}\setminus B_{k-i})\\
&\le c_2\sum_{i=1}^\infty \eta^{(k-i-1)}
\left(\frac{\xi^{k}r}{\xi^{k-i}r}\right)^\alpha \le
\frac{c_2\eta^{k+1} \eta^{-2}\xi^\alpha}{\eta-\xi^\alpha}.
\end{align*}
Note that, since $x_1\in B(0, 2R)$ and {$\xi^k r\ge 2r^{\delta}\ge 2R^{\delta_0}$}, by \eqref{p2-2-1a} we
have $p_k\ge c_3>0$. Combining with all the conclusions above, we
arrive at the statement that
\begin{align*}b_{k+1}-a_{k+1}&\le \eta^{k+1}\left(  \frac{\eta^{-1} p_k}{2}+\eta^{-2}(1-p_k)+
\frac{c_2\eta^{-2}\xi^\alpha}{\eta-\xi^\alpha} \right)\\
&=\eta^{k+1}\left[\eta^{-2}-\Big(\eta^{-2}-\frac{\eta^{-1}}{2}\Big)p_k+
\frac{c_2\eta^{-2}\xi^\alpha}{\eta-\xi^\alpha} \right]\\
&\le\eta^{k+1}\left(\eta^{-2}(1-c_3)+\frac{\eta^{-1}c_3}{2}+
\frac{c_2\eta^{-2}\xi^\alpha}{\eta-\xi^\alpha}
\right).\end{align*} Choosing $\eta$ close to $1$ and then $\xi\in
(0,(1/4)\wedge \eta^{1/\alpha})$ close to $0$ such that $$
\eta^{-2}(1-c_3)+\frac{\eta^{-1}c_3}{2} +
\frac{c_2\eta^{-2}\xi^\alpha}{\eta-\xi^\alpha}\le 1,$$ we get
$b_{k+1}-a_{k+1}\le \eta_{k+1}.$ This proves \eqref{e:ph1}.

For any $(s,x),(t,y)\in Q(t_0,x_0, r)$ with $s\le t$ and $(C_0^{-1}|t-s|)^{1/\alpha}+\rho(x,y)\ge 2r^{\delta}$,
let $k$ be the smallest integer such that $(C_0^{-1}|s-t|)^{1/\alpha}+\rho(x,y)\ge \xi^{k+1} r$. Then,
$(C_0^{-1}|s-t|)^{1/\alpha}+\rho(x,y)\le \xi^{k}r$, and so $\xi^k r\ge 2r^{\delta}$ and $(t,y)\in Q(s,x,\xi^kr)$. According to \eqref{e:ph1}, we know that
$$|q(s,x)-q(t,y)|\le \eta^k\le \eta^{-1}\left(\frac{(C_0^{-1}|s-t|)^{1/\alpha}+\rho(x,y)}{r}\right)^{\log_\xi\eta}.$$ The proof is finished.
\end{proof}

\begin{remark}\label{R:H} (1) By carefully tracking the proofs of Theorems \ref{exit} and $\ref{T:holder}$, we know that
the constant $C_0$ in \eqref{l2-2-1a} and
the constants $C_1$ in \eqref{l2-2-0a}
and \eqref{t3-2-1} only depend on
$c_M$ and $c_G$ given by \eqref{al2-0} and \eqref{al2-1} respectively, as well as $C_1$ and $c_0$ in Assumption {\bf (Exi.($\theta$))}.

(2) According to Proposition \ref{L:tight}, the proof of Theorem \ref{exit} and
the arguments in this subsection, we can obtain that, when
$\alpha\in(0,1)$, Theorems \ref{exit} and \ref{T:holder} still hold
under assumption {\bf(Exi.($\theta$)')}. \end{remark}

\section{Convergence of stable-like processes
on metric measure spaces}\label{section3} In this section, we give
convergence criteria for stable-like processes on metric measure
spaces.
The section is split into four parts. In Subsection
\ref{subsection4.1} we first adopt the framework from \cite{CKK},
which essentially assumes that the metric measure space
$(F,\rho,\mu)$, as the state space of the stable-like process, is
endowed with the graph approximations $(V_n,E_{V_n})_{n\ge1}$. Then,
in Subsection \ref{S:app} we define a class of Dirichlet forms
$(D_{V_n},\F_{V_n})_{n\ge1}$ on the approximating graphs
$(V_n,E_{V_n})_{n\ge1}$, which associate a class of symmetric
$\alpha$-stable-like processes $(X^{(n)})_{n\ge1}$. We also consider
the corresponding scaling limit $(D_0,\F_0)$ of Dirichlet forms,
which generates a $\alpha$-stable-like process $X$ living on
$(F,\rho,\mu)$. With those constructions at hand, we will establish
sufficient conditions for the weak convergence of $(X^n)_{n\ge1}$ to
the process $X$. The proof contains two key steps. The first one is
to apply the results from \cite{CKK} and get the generalized
Mosco
convergence of Dirichlet forms $(D_{V_n},\F_{V_n})_{n\ge1}$ into
$(D_0,\F_0)$, which indicates that the associated semigroups
converge in the $L^2$-sense. The second one is to strengthen this
convergence into the required weak convergence of $(X^n)_{n\ge1}$.
For this, we will make full use of exit time estimates for the
processes $(X^n)_{n\ge1}$ and the H\"{o}lder regularity of
associated caloric
functions studied in Subsections
\ref{subsection3.2} and \ref{subsection3.3}, respectively.

\subsection{Basic setting}\label{subsection4.1}
Let $(F,\rho,m)$ be a metric measure space, where $(F,\rho)$ is a locally compact separable and connected
metric space, and $m$ is a Radon measure on $F$. For every $x\in F$ and $r>0$,
let $B_F(x,r)=\{z\in F: \rho(z,x)<r\}$.
We always assume the following assumptions on $(F,\rho,m)$.
 \paragraph{{\bf Assumption (MMS)}}\label{a3-1}{\it
\begin{itemize}
\item [(i)] For any $x\in F$ and $r>0$, the closure of $B_F(x,r)$ is
compact,
and  $m(\partial (B_F(x,r)))=0$, where $\partial
(B_F(x,r))=\overline{B_F(x,r)}\backslash B_F(x,r).$
\item [(ii)] $\rho: F\times F\rightarrow \R_+$ is geodesic, i.e., for any $x,y\in F$, there exists a
continuous map $\gamma: [0,\rho(x,y)]\rightarrow F$ such that $\gamma(0)=x$, $\gamma(\rho(x,y))=y$
and $\rho(\gamma(s),\gamma(t))=t-s$ for all $0\le s \le t \le \rho(x,y)$.
\item[(iii)] There exist constants $c_F\ge1$ and $d>0$ such that
\begin{equation}\label{a3-1-1}
c_F^{-1}r^{d}\le m(B_F(x,r))\le c_Fr^{d},\quad \forall\ x\in F,\ 0<r<r_F:=\sup_{y,z\in F}\rho(y,z).
\end{equation}
\end{itemize}}

The metric measure space $(F,\rho,m)$ will serve as the state space
of the stable-like process $Y$ which will be defined later.

According to \cite[Theorem 2.1]{CKK}, such a metric measure space is
endowed with the following graph approximations.

\begin{lemma}\label{L:ac} Under assumption {\bf(MMS)}, $F$ admits a sequence of approximating graphs $\{G_n:=(V_n,E_{V_n}),n\ge1\}$ such that the following properties hold.
\begin{itemize}
\item [(1)] For every $n \ge1$, $V_n\subseteq F$, and $(V_n,E_{V_n})$ is connected and has
uniformly bounded degree. Moreover,
$\cup_{n=1}^{\infty}V_n$ is dense in $F$.

\item[(2)] There exist positive constants
$C_1$ and $C_2$ such that for every $n \ge1$ and $ x,y\in V_n$,
\begin{equation}\label{a3-3-1}
\frac{C_1}{n}\rho_{n}(x,y)\le \rho(x,y) \le \frac{C_2}{n}\rho_{n}(x,y),
\end{equation}
where $\rho_n$ is the graph distance of $(V_n,E_{V_n})$.

\medskip

\item[(3)] For each $n\ge1$, there exist a class of subsets $\{U_n(x): x\in V_n\}$ of $F$ such that
$\bigcup_{x \in V_n}U_n(x)\subset F$, $m\big(U_n(x)\cap U_n(y)\big)=0$ for $x \neq y$,
\begin{equation}\label{a3-3-2} V_n\cap \text{Int}\, U_n(x)=\{x\},\,\,
\sup\{\rho(y,z):y,z\in U_n(x)\}\le \frac{C_3}{n},\quad \forall\ x\in V_n,
\end{equation}
and
\begin{equation}\label{a3-3-3}
\frac{C_4}{n^{d}}\le m\big(U_n(x)\big)\le \frac{C_5}{n^{d}},\quad \forall\ n\ge1,\ x\in V_n,
\end{equation} where $\text{Int}\, U_n(x)$ denotes the set of the interior points of $U_n(x)$.

Moreover, for all $r>0$ and $y\in F$,
\begin{equation}\label{a3-3-2a}
\lim_{n \rightarrow \infty}m\Big(
B_F(y,r)\bigcap\big(F\setminus\bigcup_{x \in V_n}U_n(x)\big)\Big)=0.
\end{equation}
 For each $n\ge1$ and $y \in F\setminus\bigcup_{x \in V_n}U_n(x)$, there exists $z\in V_n$  such that
$\rho(y,z)\le {C_6}{n}^{-1}.$ Here $C_i$ $(i=3,\cdots,6)$ are positive constants independent of $n$.
\end{itemize}
\end{lemma}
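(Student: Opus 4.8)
This is \cite[Theorem 2.1]{CKK}, and the plan is to reproduce that construction: build $G_n$ from a maximal $n^{-1}$-separated net of $F$ together with an associated Voronoi-type partition, and then read off properties (1)--(3) from the $d$-set bound \eqref{a3-1-1} and the geodesic property of $\rho$. Fix $n\ge1$ and, by Zorn's lemma, choose a maximal set $V_n\subseteq F$ with $\rho(x,y)\ge 1/n$ for all distinct $x,y\in V_n$. Maximality gives the covering property $\bigcup_{x\in V_n}B_F(x,1/n)=F$, while the separation gives the packing property that the balls $B_F(x,1/(2n))$, $x\in V_n$, are pairwise disjoint; density of $\bigcup_n V_n$ in $F$ is then immediate, and (since balls have relatively compact closure) $V_n$ is locally finite. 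The final assertion of the lemma, that every $y\in F\setminus\bigcup_{x}U_n(x)$ lies within $C_6/n$ of a point of $V_n$, is nothing but this covering property.

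For the graph structure I would join $x\sim y$ in $G_n$ precisely when $0<\rho(x,y)\le 3/n$. If $y_1,\dots,y_k$ are the neighbours of a fixed $x$, the disjoint balls $B_F(y_i,1/(2n))$ all lie in $B_F(x,7/(2n))$, so \eqref{a3-1-1} forces $k\le c_1$ with $c_1=c_1(c_F,d)$; hence the degrees are uniformly bounded. Connectedness comes from Assumption {\bf (MMS)}(ii): given $x,y\in V_n$, I would take a geodesic $\gamma$ joining them, look at the points $\gamma(j/n)$ for $0\le j\le \lceil n\rho(x,y)\rceil$, and replace each by a net point at distance $\le 1/n$ (possible by the covering property); consecutive net points in the resulting list are then within $3/n$, hence adjacent in $G_n$, so $V_n$ is connected.

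The comparison \eqref{a3-3-1} is read off from the same chain. The upper bound $\rho(x,y)\le \tfrac3n\rho_n(x,y)$ holds because each edge has $\rho$-length at most $3/n$. For the lower bound, the chain above has at most $\lceil n\rho(x,y)\rceil+1\le 2n\rho(x,y)$ edges, where one uses $\rho(x,y)\ge 1/n$ for distinct net points to absorb the additive constant, so $\rho_n(x,y)\le 2n\rho(x,y)$, i.e.\ $\tfrac1{2n}\rho_n(x,y)\le\rho(x,y)$. For the cells, I would fix an enumeration of the countable set $V_n$ and let $U_n(x)$ be the Voronoi cell of $x$ with ties broken in favour of the earlier point, so that $\{U_n(x)\}_{x\in V_n}$ is a Borel partition of $F$; if one prefers relatively open cells one discards the bisectors $\{\rho(\cdot,x)=\rho(\cdot,y)\}$, which by {\bf (MMS)}(i) meet any fixed ball in a set of $m$-measure zero, and this yields \eqref{a3-3-2a}. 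One checks $B_F(x,1/(2n))\subseteq U_n(x)\subseteq\overline{B_F(x,1/n)}$: a point within $1/(2n)$ of $x$ is strictly closer to $x$ than to any other net point, while a point farther than $1/n$ from $x$ has a strictly closer net point. Hence the diameter of $U_n(x)$ is at most $2/n$ and, by \eqref{a3-1-1}, $c_F^{-1}(2n)^{-d}\le m(U_n(x))\le c_Fn^{-d}$, which is \eqref{a3-3-2}--\eqref{a3-3-3}; moreover $x\in \text{Int}\,U_n(x)$ while any other $z\in V_n$ lies in $U_n(z)$ and hence not in $\text{Int}\,U_n(x)$, so $V_n\cap \text{Int}\,U_n(x)=\{x\}$.

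I expect the only genuinely delicate point to be the lower bound in \eqref{a3-3-1}: it is exactly here that the geodesic hypothesis {\bf (MMS)}(ii) is indispensable, since without it two net points at small $\rho$-distance need not be joinable by a short chain of net points and $\rho_n$ could fail to be comparable to $n\rho$. Everything else is routine bookkeeping with the $d$-set bound \eqref{a3-1-1}, together with the mild boundary condition {\bf (MMS)}(i) used for \eqref{a3-3-2a}.
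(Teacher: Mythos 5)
The paper gives no proof of this lemma: it simply invokes \cite[Theorem 2.1]{CKK}, so there is no in-paper argument to compare against. Your proposal reconstructs the expected net-and-Voronoi construction that underlies such a theorem, and the reconstruction is correct: the maximal $1/n$-separated net gives the covering and packing properties, the $3/n$-adjacency rule plus a packing count gives uniformly bounded degree, the geodesic hypothesis \textbf{(MMS)}(ii) is used exactly where it must be (to produce the short chain of net points that makes $G_n$ connected and gives $\rho_n(x,y)\le 2n\rho(x,y)$), and the tie-broken Voronoi partition $\{U_n(x)\}$ delivers \eqref{a3-3-2}--\eqref{a3-3-3} via the sandwich $B_F(x,1/(2n))\subseteq U_n(x)\subseteq\overline{B_F(x,1/n)}$ and the $d$-set bound \eqref{a3-1-1}, with \textbf{(MMS)}(i) used only to pass from $m(B_F(x,1/n))$ to $m(\overline{B_F(x,1/n)})$.

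One parenthetical remark in your write-up is unjustified as stated: you claim the bisectors $\{z:\rho(z,x)=\rho(z,y)\}$ have locally zero $m$-measure ``by \textbf{(MMS)}(i).'' That hypothesis only concerns spheres $\partial B_F(x,r)$, not bisectors, and in a general geodesic $d$-set the bisector need not be $m$-null. Fortunately your primary construction---a Borel partition of $F$ by Voronoi cells with ties broken by an enumeration of $V_n$---already makes $\bigcup_{x\in V_n}U_n(x)=F$, so \eqref{a3-3-2a} holds with the left-hand side identically zero and the bisector discussion is simply unnecessary; I would delete that aside rather than try to justify it. With that small edit, your argument is a faithful and self-contained proof of the cited construction.
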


We will consider stable-like processes on the
graphs $\{G_n\}_{n\ge 1}$.

\subsection{Stable-like processes on graphs and the metric measure spaces}\label{S:app}
We first introduce a class of Dirichlet forms $(D_{V_n}, \F_{V_n})$ on
the graph $(V_n,E_{V_n})$.
For any $n\ge1$, define
\begin{align*}
D_{V_n}(f,f)&=\frac{1}{2}\sum_{x,y\in V_n}(f(x)-f(y))^2\frac{w_{x,y}^{(n)}}{\rho(x,y)^{d+\alpha}}
m_n(x)m_n(y),\quad f\in \F_{V_n},\\
\F_{V_n}&=\{f\in L^2(V_n;m_n): D_{V_n}(f,f)<\infty\},
\end{align*}
where $\alpha\in (0,2)$, $\rho(x,y)$ is the distance function on $F$, $m_n$ is the measure on $V_n$ defined by
$$
m_n(A):=\sum_{x \in A}m\big(U_n(x)\big),\ \ \forall\ A\subset V_n,
$$(for simplicity, we write $m_n(x)=m_n(\{x\})$ for all $x\in V_n$),
and $\{w_{x,y}^{(n)}:x,y\in V_n\}$ is a sequence
satisfying that $w_{x,y}^{(n)}\ge0$ and $w_{x,y}^{(n)}=w_{y,x}^{(n)}$ for all $x\neq y$, and
$$
\sum_{y\in V_n}\frac{w_{x,y}^{(n)}}{\rho(x,y)^{d+\alpha}}m_n(y)<\infty,\quad x\in V_n.
$$  We note that, in the definition of
the Dirichlet form
$(D_{V_n}, \F_{V_n})$ we use the metric $\rho(x,y)$ instead of the graph metric $\rho_n(x,y)$ on $V_n$.
According to \cite[Theorem 2.1]{CKK},  for any $n\ge1$, $(D_{V_n}, \F_{V_n})$ is a regular Dirichlet form on $L^2(V_n;m_n)$. Let $X^{(n)}:=\{(X_t^{(n)})_{t\ge0}, (\Pp_x)_{x\in V_n}\}$ be the associated symmetric Markov process.

To obtain the weak
convergence for  $X^{(n)}$, we also introduce a kind of scaling processes
associated with $\{X^{(n)}\}_{n\ge1}$.
For any $n\ge1$, let $\mathbf{P}_n$  be the projection map from $(V_n, \rho)$ to $(V_n,\rho_{n})$ such that $\mathbf{P}_n(x):=x$ for
$x \in V_n$. Define a measure $\tilde m_n$ on
$(V_n,\rho_{n})$ as follows
$$
\tilde m_n(A)=n^{d}m_n\big(\mathbf{P}_n^{-1}(A)\big)=n^{d}\sum_{x \in \mathbf{P}_n^{-1}(A)}m_n(x),\quad A\subset V_n.
$$ For simplicity, $\tilde m_n(x)=\tilde m_n(\{x\})$ for any $x\in V_n$.
For any $n\ge1$, we consider the following Dirichlet form
$\big(\tilde D_{V_n},\tilde \F_{V_n}\big)$ on $L^2(V_n;\tilde m_n)$:
\begin{align*}
\tilde D_{V_n}(f,f)&=\frac{1}{2}\sum_{x,y\in V_n}(f(x)-f(y))^2 \frac{\tilde w_{x,y}^{(n)}}{\rho_{n}(x,y)^{d+\alpha}}\tilde m_n(x)
\tilde m_n(y),\quad
f\in \tilde \F_{V_n},\\
\tilde \F_{V_n}&=\{f\in L^2(V_n;\tilde m_n): \tilde D_{V_n}(f,f)<\infty\},
\end{align*}
where $$\tilde w^{(n)}_{x,y}:=w^{(n)}_{x,y}
\left(\frac{\rho_{n}(x,y)}{n\rho(x,y)}\right)^{d+\alpha},\quad x,y\in V_n.$$
Note that $\tilde D_{V_n}(f,f)=n^{d-\alpha}D_{V_n}(f,f)$ and $\tilde \F_{V_n}=\F_{V_n}$.
Let $\tilde X^{(n)}$ be the symmetric Markov process associated with $\big(\tilde D_{V_n},\tilde \F_{V_n}\big)$.
According to the expressions of $\big(D_{V_n},\F_{V_n}\big)$ and $\big(\tilde D_{V_n},\tilde \F_{V_n}\big)$, we know that
$\big(\mathbf{P}_n(X^{(n)}_t)\big)_{t\ge 0}$ has the same distribution as
$\big(\tilde X^{(n)}_{n^{\alpha}t}\big)_{t\ge 0}$.

As a candidate of the scaling limit of the
Dirichlet forms $(D_{V_n}, \F_{V_n})$
on the graphs $(V_n, E_{V_n})$, we now define a symmetric Dirichlet
form $(D_0, \F_0)$ on $L^2(F;m)$ as follows
\begin{equation}\label{e4-1}
\begin{split}
D_0(f,f)&=\frac{1}{2}\int_{\{F\times F\setminus {\rm diag}\}}\big(f(x)-f(y)\big)^2\frac{c(x,y)}{\rho(x,y)^{d+\alpha}}\,m(dx)\,m(dy),\quad f \in \F_0,\\
\F_0&=\{f\in L^2(F;m): D_0(f,f)<\infty\},
\end{split}
\end{equation}
where $\alpha\in(0,2)$,  ${\rm diag}:=\{(x,y)\in F\times F:x=y\}$ and  $c:F\times
F\rightarrow (0,\infty)$ is a symmetric
continuous
function such that $0<c_1\le c(x,y)\le c_2<\infty$ for all
$(x,y) \in F\times F\setminus{\rm diag}$ and some constants
$c_1,c_2$. According to \eqref{a3-1-1} and the fact that $\alpha\in(0,2)$, we have
\begin{align*}
&\sup_{x\in F}\int_{F\setminus\{y\in F: y\neq x\}}\big(1\wedge \rho^2(x,y)\big)\frac{c(x,y)}{\rho(x,y)^{d+\alpha}}\,m(dy)\\
&\le \sup_{x\in F}\sum_{k=0}^\infty\int_{\{y\in F: 2^{-(1+k)}<\rho(y,x)\le 2^{-k}\}}\frac{c(x,y)}{\rho(x,y)^{d+\alpha-2}}\,m(dy)\\
&\quad+\sup_{x\in F}\sum_{k=0}^\infty\int_{\{y\in F: 2^{k}<\rho(y,x)\le 2^{1+k}\}}\frac{c(x,y)}{\rho(x,y)^{d+\alpha}}\,m(dy)\\
&\le c_2\sup_{x\in F}
\left(\sum_{k=0}^{\infty}m(B_F(x,2^{-k}))2^{(d+\alpha-2)(1+k)}+\sum_{k=0}^{\infty}m(B_F(x,2^{1+k}))2^{-(d+\alpha)k}\right)\\
&\le c_3\left(
\sum_{k=0}^{\infty}2^{-(2-\alpha) k}+\sum_{k=0}^{\infty}2^{-\alpha k}\right)<\infty.
\end{align*}
This implies ${\rm Lip}_c(F)\subseteq \F_0$, where ${\rm Lip}_c(F)$ denotes the space of Lipschitz continuous functions
on $F$ with compact support.  We also need the following assumption on $(D_0,\F_0)$.

 \paragraph{{\bf Assumption (Dir.)}}\label{a3-2}
{\it
${\rm Lip}_c(F)$ is dense in $\F_0$ under the norm $\|\cdot\|_{D_0,1}:=\!\!\big(D_0(\cdot,\cdot)+\|\cdot\|^2_{L^2(F;m)}\big)^{1/2}$.}

\noindent
Therefore, $(D_0,\F_0)$ is a regular Dirichlet form on $L^2(F;m)$, and there exists a strong Markov process
$Y:=(Y_{t})_{t\ge 0}$ associated with $(D_0,\F_0)$. Moreover, by \cite[Theorem 1.1]{CK} or \cite[Theorem 1.2]{CK08},
the process $Y$ has a heat kernel $p^Y:(0,\infty)\times F \times F \rightarrow (0,\infty)$, which is jointly continuous.
In particular, the process
$Y:=\big((Y_{t})_{t\ge 0}, (\Pp_{x}^Y)_{x \in
F}\big)$ can start from all $x\in F$.
The process $Y$ is called a $\alpha$-stable-like process in the literature, see \cite{CK, CK08}.
Two-sided estimates for heat kernel $p^Y(t,x,y)$ of the process $Y$ have been obtained in \cite{CK}.

\subsection{Generalized Mosco convergence}
To study the convergence property of process $X^{(n)}$, we will use
some results from \cite{CKK}, which are concerned with the
generalized Mosco convergence of $X^{(n)}$.

For any $n\ge1$, we define an
extension operator $E_n:L^2\big(V_n;m_n\big) \rightarrow
L^2(F;m)$ as follows
\begin{equation}\label{e4-2a}
E_n(g)(z)=
\begin{cases}
 g(x), & z\in \text{Int}U_n(x)\ \text{for  some}\ x\in V_n,\\
0,   & z\in F\setminus\bigcup_{x \in V_n}U_n(x),\
\end{cases}\quad g\in L^2\big(V_n;m_n\big).\end{equation}
Note that because $m(\partial U_n(x))=0$ for any $x\in V_n$ by Assumption {\bf(MMS)}(i), there
is no need to worry about $E_n(g)$ on $\bigcup_{x\in V_n}\partial
U_n(x)$, and the function $E_n(g)$ is a.s.\ well defined on $F$.
Note also that the definition of the extension operator $E_n$ above is a little different from that in \cite{CKK}, see \cite[(2.14)]{CKK}.  Furthermore, we  define a projection (restriction) operator
$\pi_n: L^2(F;m) \rightarrow L^2\big(V_n;m_n\big)$ as
follows
$$
\pi_n(f)(x)=m_n(x)^{-1}\int_{U_n(x)}f(z)\,m(dz),\quad x \in V_n,\
f\in L^2\big(F;m\big).
$$

\begin{remark} By Lemma \ref{L:ac}, under assumption {\bf (MMS)}, the space $F$ admits a sequence of approximating graphs $\{(V_n,E_{V_n}):n\ge1\}$ enjoying all the properties mentioned in Lemma \ref{L:ac}. Though  these properties are weaker than {\bf(AG.1)}--{\bf(AG.3)} in \cite[Theorem 2.1]{CKK},
one can verify that \cite[Lemma 4.1]{CKK} and so \cite[Theorem 4.7]{CKK} still hold with notations above.
These weaker properties hold for the case that $F$ is a bounded Lipschitz domain in $\R^d$.
\end{remark}

For simplicity, we assume that
there exists a point $0 \in \bigcap_{n=1}^{\infty}V_n$; otherwise, we can take a sequence $\{o_n\}_{n\ge1}$ such that
$o_n\in V_n$ for all $n\ge1$ and $\lim_{n \rightarrow \infty}o_n$ exists, and then the arguments below still hold true with this limit point $0:=\lim_{n \rightarrow \infty}o_n$.

Fix $0\in \cap_{n=1}^{\infty}V_n$. We assume that the following conditions hold for $\{w_{x,y}^{(n)}: x,y \in V_n\}$.

 \paragraph{{\bf Assumption (Mos.)}} \label{a4-2}{\it

\begin{itemize}
\item[(i)]  For every $R>0$,
\begin{equation}\label{p3-1-1}
\lim_{\varepsilon \to 0}\limsup_{n \rightarrow \infty}
 \bigg[n^{-2d}\sum_{x,y\in B_F(0,R)\cap V_n: 0<\rho(x,y)\le \varepsilon}\frac{w_{x,y}^{(n)}}{\rho(x,y)^{d+\alpha-2}}\bigg]=0
\end{equation} and
\begin{equation}\label{p3-1-1a}
\lim_{l \rightarrow \infty}\limsup_{n \rightarrow \infty}
\bigg[n^{-2d}\sum_{x,y\in B_F(0,R)\cap V_n: \rho(x,y)\ge
l}\frac{w_{x,y}^{(n)}}{\rho(x,y)^{d+\alpha}}\bigg]=0.
\end{equation}

\item[(ii)] For any sufficiently small $\varepsilon>0$, large $R>0$ and any
$f \in {\rm Lip}_c(F)$,
\begin{equation}\label{p3-1-2}
\begin{split}
\lim_{n \rightarrow \infty}\left[n^{-d}\!\!
 \sum_{x\in B_F(0,R)\cap V_n}\!\!\bigg(\sum_{y \in B_F(0,R)\cap V_n: \rho(x,y)>\varepsilon}
\!\!\!\!\big(f(x)\!\!-\!\!f(y)\big)\frac{(w_{x,y}^{(n)}\!\!-\!\!c(x,y))}{\rho(x,y)^{d+\alpha}}m_n(y)\bigg)^2\right]=0.
\end{split}
\end{equation}

\item[(iii)] For any sufficiently small $\varepsilon>0$, large $R>0$ and any
$f\in C_b(B_F(0,R))$,
\begin{equation}\label{p3-1-3}
\begin{split}
\lim_{n \rightarrow \infty}\sum_{x,y \in B_F(0,R)\cap V_n: \rho(x,y)>\varepsilon}
\big(f(x)-f(y)\big)^2\frac{\big(w_{x,y}^{(n)}-c(x,y)\big)}{\rho(x,y)^{d+\alpha}}m_n(x)m_n(y)=0.
\end{split}
\end{equation}
\end{itemize}}

Denote  by $(P^Y_{t})_{t \ge 0}$ the Markov semigroup of the process $Y$, and
denote by $(P_t^{(n)})_{t \ge 0}$ the Markov semigroup of the process
$X^{(n)}$. We set
$\hat P_t^{(n)} f(x)= E_n(P_t^{(n)}(\pi_n(f)))(x)$ for any $f\in L^2(F;m). $

\begin{proposition}\label{p3-1}
Suppose that Assumptions {\bf{(MMS)}, \bf{(Dir.)}} and {\bf{(Mos.)}} hold. Then
$$
\lim_{n \rightarrow \infty}\|\hat P_t^{(n)}f-P_{t}^Y
f\|_{L^2(F;m)}=0,\quad f\in L^2(F;m),\ t>0.
$$
\end{proposition}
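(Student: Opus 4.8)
The plan is to derive the proposition from generalized Mosco convergence theory. As observed in the remark preceding the proposition, the approximating graphs furnished by Lemma \ref{L:ac} carry enough structure that \cite[Lemma 4.1]{CKK}, and hence \cite[Theorem 4.7]{CKK}, remain valid in the present notation; and \cite[Theorem 4.7]{CKK} states that the strong $L^2(F;m)$-convergence $\hat P_t^{(n)}f\to P_t^Y f$ follows once the Dirichlet forms $(D_{V_n},\F_{V_n})$ converge to $(D_0,\F_0)$ in the generalized Mosco sense, where convergence of $u_n\in L^2(V_n;m_n)$ to $u\in L^2(F;m)$ is understood as $E_nu_n\to u$ (strongly or weakly) in $L^2(F;m)$ and each form is extended by $+\infty$ off its domain. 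So the whole proof reduces to checking two conditions: (M1) if $E_nu_n\to u$ weakly in $L^2(F;m)$ then $D_0(u,u)\le\liminf_{n\to\infty}D_{V_n}(u_n,u_n)$; and (M2) for each $f\in\F_0$ there exist $f_n\in\F_{V_n}$ with $E_nf_n\to f$ strongly in $L^2(F;m)$ and $\limsup_{n\to\infty}D_{V_n}(f_n,f_n)\le D_0(f,f)$.

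For (M2) I would first use Assumption {\bf (Dir.)} to reduce to $f\in{\rm Lip}_c(F)$, approximating a general $f\in\F_0$ in $\|\cdot\|_{D_0,1}$ and diagonalizing at the end. For $f\in{\rm Lip}_c(F)$ set $f_n:=\pi_nf$; the strong convergence $E_nf_n\to f$ in $L^2(F;m)$ is immediate from the Lipschitz bound together with \eqref{a3-3-2}, \eqref{a3-3-3} and \eqref{a3-3-2a}. For the energies, split the double sum defining $D_{V_n}(f_n,f_n)$ at a threshold $\varepsilon>0$. On $\{0<\rho(x,y)\le\varepsilon\}$ the bound $|f_n(x)-f_n(y)|\le C\rho(x,y)$ controls the contribution by a constant multiple of $n^{-2d}\sum_{\rho(x,y)\le\varepsilon}w^{(n)}_{x,y}\rho(x,y)^{-(d+\alpha-2)}$, which tends to $0$ after $n\to\infty$ and $\varepsilon\downarrow0$ by \eqref{p3-1-1}; the corresponding small-jump part of $D_0(f,f)$ is $O(\varepsilon^{2-\alpha})$ by the $d$-set bound \eqref{a3-1-1}. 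On $\{\rho(x,y)>\varepsilon\}$ the kernel $c(x,y)\rho(x,y)^{-(d+\alpha)}$ is bounded and continuous away from the diagonal, so the Riemann sum with weights $c$ converges to the truncated energy thanks to the measure-approximation properties of $m_n$, while replacing $c$ by $w^{(n)}$ alters it by precisely the quantity controlled in \eqref{p3-1-3}. Sending $n\to\infty$, then $\varepsilon\downarrow0$, then the Lipschitz approximation $\to f$, yields (M2).

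The hard part is (M1). Suppose $E_nu_n\to u$ weakly in $L^2(F;m)$; we may assume $L:=\liminf_nD_{V_n}(u_n,u_n)<\infty$, so along a subsequence $\sup_nD_{V_n}(u_n,u_n)<\infty$ and (by the uniform boundedness principle) $\sup_n\|E_nu_n\|_{L^2(F;m)}<\infty$. Fix $R>0$ and $0<\varepsilon<l$, and let $D_{V_n}^{\varepsilon,l,R}$ (resp.\ $D_0^{\varepsilon,l,R}$) be the truncation of $D_{V_n}$ (resp.\ $D_0$) to pairs in $B_F(0,R)$ with $\varepsilon<\rho(x,y)<l$; by \eqref{p3-1-1} and \eqref{p3-1-1a} the discarded near- and far-jump parts within $B_F(0,R)$ become negligible after $n\to\infty$, $\varepsilon\downarrow0$, $l\uparrow\infty$. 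For a test function $\phi\in{\rm Lip}_c(F)$ I would show $D_{V_n}^{\varepsilon,l,R}(u_n,\phi)\to D_0^{\varepsilon,l,R}(u,\phi)$: after symmetrization the part with weights $w^{(n)}-c$ is bounded by $\|E_nu_n\|_{L^2}$ times the square root of the expression in \eqref{p3-1-2} with $f=\phi$, hence vanishes; the part with weights $c$ is a bilinear form with a fixed bounded kernel supported away from the diagonal, which passes to the limit by a standard weak-times-strong argument ($E_nu_n\to u$ weakly, while the step-function versions of $\phi$ and of $c\rho^{-(d+\alpha)}$ converge strongly). Then, with $\phi_j\in{\rm Lip}_c(F)$ chosen so that $\phi_j\to u$ in $L^2(F;m)$ and using $D_{V_n}^{\varepsilon,l,R}(\phi_j,\phi_j)\to D_0^{\varepsilon,l,R}(\phi_j,\phi_j)$ (the (M2)-type form convergence above, which rests on \eqref{p3-1-3}), completing the square shows that for every $j$, $2D_0^{\varepsilon,l,R}(u,\phi_j)-D_0^{\varepsilon,l,R}(\phi_j,\phi_j)\le\liminf_nD_{V_n}(u_n,u_n)=L$; letting $j\to\infty$ gives $D_0^{\varepsilon,l,R}(u,u)\le L$. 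Finally letting $R\uparrow\infty$, $\varepsilon\downarrow0$, $l\uparrow\infty$ and invoking monotone convergence yields $u\in\F_0$ with $D_0(u,u)\le L$, which is (M1).

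The principal obstacle is exactly this (M1) argument. In the random models of interest the conductances $w^{(n)}_{x,y}$ are only comparable to the continuum kernel $c(x,y)$ in an averaged sense, so the usual route to lower semicontinuity of jump forms --- pointwise kernel convergence --- is unavailable, and one must instead channel everything through Assumption {\bf (Mos.)}: \eqref{p3-1-1}--\eqref{p3-1-1a} for truncation, the averaged bound \eqref{p3-1-2} for the Cauchy--Schwarz step (it is precisely a statement on ``convergence of truncated generators on Lipschitz functions''), and \eqref{p3-1-3} for the Riemann-sum and form-convergence steps. Keeping all the truncation errors, the cell-interpolation errors in passing between the discrete sums and integrals over $\bigcup_{x}U_n(x)$, and the weak/strong bilinear passages uniform in $n$ is where the care lies. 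Once (M1) and (M2) are established, \cite[Theorem 4.7]{CKK} delivers the asserted $L^2(F;m)$-convergence of the semigroups.
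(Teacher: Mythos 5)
Your proposal reaches the same conclusion but by a genuinely different route. The paper does not establish Mosco convergence by hand: it reduces the proposition to verifying the explicit sufficient conditions $(A2)$, $(A3)^*$, $(A4)^*$ of \cite[Section 2]{CKK} --- where $(A4)^*$ is an operator-level condition, namely the $L^2$-convergence on $B_F(0,R)$ of the truncated discrete generators $L^n_{R,\varepsilon}$ to the continuum generator $L_{R,\varepsilon}$ applied to Lipschitz test functions, established via a four-term decomposition $I_{1,n},\dots,I_{4,n}$ --- and then invokes the chain $(A4)^*\Rightarrow$ Mosco $\Rightarrow$ semigroup convergence packaged in \cite[Theorems 4.7 and 8.3]{CKK}. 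You instead prove the form-level Mosco conditions (M1)/(M2) directly from \eqref{p3-1-1}--\eqref{p3-1-3}; in particular your use of \eqref{p3-1-2} inside a Cauchy--Schwarz bound on $D^{\varepsilon,l,R}_{V_n}(u_n,\phi)$ with weight $w^{(n)}-c$, together with the completing-the-square trick for the liminf inequality, amounts to inlining what \cite{CKK} handles abstractly. The paper's route buys modularity and a smaller technical footprint (one $L^2$-estimate on generators does all the work); yours buys a self-contained account that makes visible where each of \eqref{p3-1-1}, \eqref{p3-1-1a}, \eqref{p3-1-2}, \eqref{p3-1-3} enters. Two points worth tightening if you pursue your version: your large-jump analysis should explicitly invoke \eqref{p3-1-1a} for the pairs where one of $x,y$ leaves $B_F(0,R)$ but the other sits in the support of $f$ (this is glossed over, as is the $\rho>l$ tail in the (M1) bilinear estimate); and you cite only \cite[Theorem 4.7]{CKK} for the Mosco $\Rightarrow$ semigroup step whereas the paper also needs Theorem 8.3, so the exact division of labor inside \cite{CKK} should be checked before claiming Theorem 4.7 alone suffices.
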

\begin{proof}
It is easy to see that the  Dirichlet form $(D_0,\F_0)$ satisfies
{\bf $(A2)$} in \cite[Section 2]{CKK}. By assumption {\bf (Dir.)}
and the continuity of $c(x,y)$,
we know that {\bf $(A3)^*$} in \cite[Section 2]{CKK} holds true.

Clearly, condition {\bf $(A4)^*$} (i) in \cite[Section 2]{CKK} is a
direct consequence of \eqref{p3-1-1} and \eqref{p3-1-1a}. For any
$R,\varepsilon>0$ and $f \in {\rm Lip}_c(F)$, define
\begin{align*}
 L_{R,\varepsilon}f(x)&=\int_{\{z\in B_F(0,R): \rho(z,x)>\varepsilon\}}
(f(z)-f(x))
\frac{c(x,z)}{\rho(x,z)^{d+\alpha}}\,m(dz),\quad x\in F,\\
\overline{L_{R,\varepsilon}^n}f(x)&=\sum_{z \in B_F(0,R)\cap V_n: \rho(x,z)>\varepsilon}(f(z)-f(x))
\frac{w^{(n)}_{x,z}}{\rho(x,z)^{d+\alpha}}m_n(z),\quad x\in V_n,\\
L_{R,\varepsilon}^n f(x)&=E_n(\overline{L_{R,\varepsilon}^n}f)(x),\quad x\in F.
\end{align*}
Then,
$$\int_{B_F(0,R)}|L_{R,\varepsilon}^n f(x)-L_{R,\varepsilon}f(x)|^2\, m(dx)
\le \sum_{i=1}^4I_{i,n},
$$
where
\begin{align*}
I_{1,n}&=2\sum_{x\in B_F(0,R)\cap V_n}\left(\sum_{{y \in B_F(0,R)\cap V_n:}\atop{\rho(x,y)>\varepsilon}}
\big(f(x)-f(y)\big)\frac{(w_{x,y}^{(n)}-c(x,y))}{\rho(x,y)^{d+\alpha}}m_n(y)\right)^2m_n(x),\\
I_{2,n}&=8\text{osc}_n(f)^2 \sum_{x\in B_F(0,R)\cap V_n}\left(\sum_{y \in B_F(0,R)\cap V_n: \rho(x,y)>\varepsilon}
\frac{c(x,y)}{\rho(x,y)^{d+\alpha}}m_n(y)\right)^2m_n(x),\\
I_{3,n}&=8\|f\|_{\infty}^2\text{osc}_n(c)^2
\int_{B_F(0,R)}\left(\int_{B_F(0,R)\cap\{y\in F: \rho(x,y)>\varepsilon\}}
\frac{1}{\rho(x,y)^{d+\alpha}}\,m(dy)\right)^2\,m(dx),\\
I_{4,n}&=4\|f\|_{\infty}^2\|c\|_{\infty}^2\int_{B_F(0,R)\cap(F\setminus\cup_{z\in V_n}U_n(z))}
\!\!\left(\int_{B_F(0,R)\cap({F\setminus\cup_{z\in V_n}U_n(z))}\atop{\cap\{y\in F: \rho(x,y)>\varepsilon\}}}\!\!
 \frac{1}{\rho(x,y)^{d+\alpha}}\,m(dy)\right)^2\!\!m(dx),\\
\text{osc}_n(f)&=\sup_{x\in B_F(0,R)\cap V_n,x_1,x_2\in U_n(x)}
|f(x_1)-f(x_2)|,\\
 \text{osc}_n(c)&=\sup_{x,y\in B_F(0,R)\cap V_n,x_1,x_2\in U_n(x),y_1,y_2\in U_n(y)}|c(x_1,y_1)-c(x_2,y_2)|.
\end{align*}

It follows from \eqref{a3-3-3} and \eqref{p3-1-2} that $\lim_{n \rightarrow \infty}I_{1,n}=0$. Since
$f\in {\rm Lip}_c(F)$,
$\text{osc}_n(f) \rightarrow 0$ as $n \rightarrow \infty$. Then,
we arrive at the statement that
\begin{align*}
\limsup_{n\rightarrow \infty}I_{2,n} &\le c_1\varepsilon^{-2(d+\alpha)}
\big[\limsup_{n\rightarrow \infty} \text{osc}_n(f)^2\big]\\
 &\quad\times\sup_{n\ge1}\Bigg\{n^{-3d}\sum_{x\in B_F(0,R)\cap V_n}\Big(\sum_{y\in B_F(0,R)\cap V_n:\rho(x,y)>\varepsilon}
 {c(x,y)}\Big)^2
\Bigg\}\\
&\le c_2(\varepsilon)
\big[\limsup_{n\rightarrow \infty} \text{osc}_n(f)^2\big]=0.
\end{align*}
By the continuity of $c(x,y)$, it is also easy to see  that $\lim_{n \rightarrow \infty}I_{3,n}=0$. Obviously,
\eqref{a3-3-2a} implies that  $\lim_{n \rightarrow \infty}I_{4,n}=0$.
Therefore, we have
$$
\lim_{n \rightarrow \infty}\int_{B_F(0,R)}|L_{R,\varepsilon}^n f(x)-L_{R,\varepsilon}f(x)|^2\, m(dx)=0,
$$
which implies that condition {$(A4)^*$ (ii)} in \cite[Section
2]{CKK} is satisfied.

Similarly, with aid of \eqref{p3-1-3}, we can claim that condition
{$(A4)^*$} (iii) in \cite[Section 2]{CKK} is also fulfilled.
Therefore, we can verify that all the conditions of {\bf $(A4)^*$}
in \cite[Section 2]{CKK} hold under assumptions {\bf{(MMS)},
\bf{(Dir.)}} and {\bf{(Mos.)}}. Hence, the required assertion
follows from \cite[Theorem 4.7 and Theorem 8.3]{CKK}.
\end{proof}

\subsection{Weak convergence}
The main purpose of this subsection is to establish the weak
convergence theorem of the law for $X^{(n)}$ (with a fixed starting point).
For any $T\in
(0,\infty]$, denote by $\D([0,T];F)$ the collection of
c\`{a}dl\`{a}g $F$-valued functions on $[0,T]$ equipped with the
Skorohod topology. Let $\Pp_{x}^{(n)}$ be the law of $X^{(n)}$ with
starting point $x\in V_n$. Note that $\Pp_{x}^{(n)}$ can be seen as
a distribution on $\D([0,T]; F)$.
Our approach for the weak convergence of $X^{(n)}$ contains the
following two key ingredients. One is to use exit time estimates
from Subsection \ref{subsection3.2} to show the tightness of
$\Pp_{\cdot}^{(n)}$; the other one is to apply the H\"{o}lder
regularity of caloric
functions from Subsection
\ref{subsection3.3}, along with Proposition \ref{p3-1}, to derive
the convergence of $\Pp_{\cdot}^{(n)}$ in the sense of finite
dimensional distributions.

We will make use of scaling processes $\{\tilde X^{(n)}\}_{n\ge1}$
constructed in Subsection \ref{S:app}. First, we consider some
properties of the space $(V_n,\rho_n,\tilde m_n)$. For any $x\in
V_n$ and $r>0$, let $B_{V_n}(x,r)=\{z \in V_n: \rho_{n}(z,x)\le
r\}$.
\begin{lemma}\label{l4-1}
Under assumption {\bf(MMS)}, there are constants $C_0>0$ and $c_V\ge1$ such that for
all $n\ge1$,
\begin{equation}\label{l4-1-1}
c_V^{-1}\le \tilde m_n(x)\le c_V,\quad x\in V_n
\end{equation} and
\begin{equation}\label{l4-1-2}
c_V^{-1} r^{d}\le \tilde m_n(B_{V_n}(x,r))\le c_Vr^{d},\quad x\in V_n, 1\le r <C_0nr_F,
\end{equation}
where $r_F$ is the constant in \eqref{a3-1-1}.
\end{lemma}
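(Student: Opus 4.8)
The plan is to transfer the $d$-set estimate \eqref{a3-1-1} on $F$ to the rescaled graph $(V_n,\rho_n,\tilde m_n)$, using the metric comparison \eqref{a3-3-1} and the covering data \eqref{a3-3-2}--\eqref{a3-3-3} of Lemma \ref{L:ac}. The pointwise bound \eqref{l4-1-1} is essentially a restatement of \eqref{a3-3-3}: since $\mathbf{P}_n$ restricts to the identity on $V_n$, one has $\tilde m_n(x)=n^dm_n(x)=n^dm(U_n(x))$, so $C_4\le\tilde m_n(x)\le C_5$. The same computation, together with the essential disjointness $m(U_n(y)\cap U_n(z))=0$ for $y\ne z$, shows that for any $A\subseteq V_n$ (and $B_{V_n}(x,r)$ is finite since $(V_n,E_{V_n})$ has uniformly bounded degree) one has $\tilde m_n(A)=n^d m\big(\bigcup_{y\in A}U_n(y)\big)$; both inequalities in \eqref{l4-1-2} will be read off from this identity.

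For the upper bound I would combine \eqref{a3-3-1}, which gives $B_{V_n}(x,r)\subseteq\{y\in V_n:\rho(x,y)\le C_2r/n\}$, with the fact from \eqref{a3-3-2} that each $U_n(y)$ contains $y$ and has $\rho$-diameter at most $C_3/n$; hence, using $r\ge1$, $\bigcup_{y\in B_{V_n}(x,r)}U_n(y)\subseteq B_F(x,(C_2+C_3+1)r/n)$, and \eqref{a3-1-1} yields $\tilde m_n(B_{V_n}(x,r))\le c_F(C_2+C_3+1)^d r^d$, valid as long as $(C_2+C_3+1)r/n<r_F$.

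The lower bound is the one point requiring a little care, because $\bigcup_{x\in V_n}U_n(x)$ need not exhaust $F$. Rather than trying to estimate the exceptional set $F\setminus\bigcup_xU_n(x)$ directly, I would first observe that $V_n$ is a $(C_7/n)$-net of $F$ with $C_7:=C_3\vee C_6$: if $z\in U_n(y)$ then $\rho(z,y)\le C_3/n$, and otherwise the last clause of Lemma \ref{L:ac}(3) gives a point of $V_n$ within $C_6/n$ of $z$. Covering a ball $B_F(x,s)$ by the balls $B_F(y,2C_7/n)$ with $y\in V_n\cap B_F(x,s+C_7/n)$ and comparing volumes through \eqref{a3-1-1} produces the counting estimate $\#\big(V_n\cap B_F(x,s+C_7/n)\big)\ge s^dn^d/(2^dc_F^2C_7^d)$. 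Taking $s=(C_1r-C_7)/n$ (so that $s+C_7/n=C_1r/n$) and invoking \eqref{a3-3-1} in the form $B_F(x,C_1r/n)\cap V_n\subseteq B_{V_n}(x,r)$, one gets $\#B_{V_n}(x,r)\ge c\,r^d$ for $r\ge 2C_7/C_1$; since $\tilde m_n(\{y\})\ge C_4$ this gives $\tilde m_n(B_{V_n}(x,r))\ge C_4c\,r^d$ on that range. The leftover range $1\le r<2C_7/C_1$ is trivial via $\tilde m_n(B_{V_n}(x,r))\ge\tilde m_n(\{x\})\ge C_4$.

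Finally one chooses $C_0>0$ small enough (depending only on the constants of Lemma \ref{L:ac}) so that $1\le r<C_0nr_F$ forces each of the radii $C_1r/n$, $(C_2+C_3+1)r/n$, $2C_7r/n$ to stay below $r_F$, and takes $c_V$ to be the maximum of the finitely many constants produced above. The main (and only) obstacle is thus the lower bound, where one must work with the net property of $V_n$ in place of an exact partition of $F$; the rest is routine bookkeeping with the constants of Lemma \ref{L:ac} and the $d$-set condition \eqref{a3-1-1}.
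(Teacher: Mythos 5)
Your proposal is correct and follows essentially the same route as the paper's proof: both read \eqref{l4-1-1} off from \eqref{a3-3-3}, obtain the upper bound by enlarging a $\rho_n$-ball to a $\rho$-ball via \eqref{a3-3-1} and \eqref{a3-3-2} and invoking \eqref{a3-1-1}, and obtain the lower bound by exploiting the net property of $V_n$ from Lemma \ref{L:ac}(3) together with the volume comparison \eqref{a3-1-1}. The only superficial difference is that you count $V_n$-points and multiply by the uniform bound $\tilde m_n(\{y\})\ge C_4$, whereas the paper sums $m(B_F(y,c_0/n))$ directly; these are equivalent given \eqref{a3-3-3}.
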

\begin{proof}
By the definition of $\tilde m_n$ and \eqref{a3-3-3}, \eqref{l4-1-1} holds trivially.

Note that, for any $x\in V_n$, $y \in B_F(x,r)\cap V_n$ and $z \in
U_n(y)$, by \eqref{a3-3-2}, we have $\rho(z,x)\le
\rho(z,y)+\rho(y,x) \le C_3n^{-1}+r,$ and so $\bigcup_{y \in
B_F(x,r)\cap V_n}U_n(y)\subseteq B_F(x,r+C_3n^{-1}).$ Hence, for
any $x\in V_n$ and $1\le r<{(nr_F-C_3)}/{C_2}$ (where $C_2$  and
$C_3$ are constants in \eqref{a3-3-1} and \eqref{a3-3-2}),
\begin{align*}
\tilde m_n\big(B_{V_n}(x,r)\big)&=n^{d}m_n\big(B_{V_n}(x,r)\cap V_n\big)\le
n^{d}m_n\big(B_{F}(x,C_2n^{-1}r)\cap V_n\big)\\
&=n^{d}\sum_{y \in B_F(x,C_2n^{-1}r)\cap V_n}m\big(U_n(y)\big)\le n^{d}m\big(B_F(x,C_2n^{-1}r+C_3n^{-1})\big)\le c_0r^{d},
\end{align*}
where in the first inequality we used
\eqref{a3-3-1}, the second inequality is due to the facts that $m(U_n(x)\cap U_n(y))=0$ for all $x\neq y$ and $\bigcup_{y \in B_F(x,C_2n^{-1}r)\cap V_n}U_n(y)
\subseteq B_F(x,C_2n^{-1}r+C_3n^{-1})$ as explained above, and the last inequality follows from \eqref{a3-1-1}.

On the other hand, for any
$z \in B_F(x,r)$, by $(3)$  in Lemma \ref{L:ac}, there exists $y\in V_n$ such that $\rho(y,z)\le c_0n^{-1}$ for some constant $c_0>0$, and so
$\rho(y,x)\le \rho(z,x)+\rho(z,y)\le r+c_0n^{-1}.$ This implies that $B_F(x,r)\subset \bigcup_{y \in
B_F(x,r+c_0n^{-1})\cap V_n}B_F(y,c_0n^{-1}).$  Hence, for
$(2(C_1^{-1}c_0))\vee1< r<({nr_F+c_0})/{C_1}$ (where $C_1$  is the
constant in \eqref{a3-3-1}) and $x \in V_n$,
\begin{align*}
\tilde m_n\big(B_{V_n}(x,r)\big)&=n^{d}m_n\big(B_{V_n}(x,r)\big)\ge
n^{d}m_n\big(B_{F}(x,C_1n^{-1}r)\cap V_n\big)\\
&=n^{d}\sum_{y \in B_F(x,C_1n^{-1}r)\cap V_n}m\big(U_n(y)\big)\ge c_1n^{d}\sum_{y \in B_F(x,C_1n^{-1}r)\cap V_n}m\big(B_F(y,c_0n^{-1})\big)\\
&\ge c_1 n^{d}m\big(B_F(x,C_1n^{-1}r-c_0n^{-1})\big)\ge c_2r^{d},
\end{align*}
where in the first inequality we used
\eqref{a3-3-1} again, the second inequality follows from \eqref{a3-1-1} and \eqref{a3-3-3},
the third inequality is due to  $\bigcup_{y \in B_F(x,C_1n^{-1}r)\cap V_n}B_F(y,c_0n^{-1})
\supseteq B_F(x,C_1n^{-1}r-c_0n^{-1})$ as claimed before, and in the last one we have used \eqref{a3-1-1}.

Therefore, combining both estimates above and changing the corresponding constants properly, we prove \eqref{l4-1-2}.
\end{proof}

By \eqref{a3-3-1}, for all $n\ge1$,
$
 \sup_{x,y \in V_n}\rho_{n}(x,y)\le C_1^{-1}nr_F,
 $ where $r_F$ is the constant in \eqref{a3-1-1}.
Below, we let $C_0'=C_1^{-1}$.
For any $x,z\in
V_n$ and $r>0$, let $B^{w^{(n)}}_{V_n,z}(x,r)=\{y\in B_{V_n}(x,r): w^{(n)}_{y,z}>0\}$, and $B^{w^{(n)}}_{V_n}(x,r)=
B^{w^{(n)}}_{V_n,x}(x,r)$.
We need the following further assumptions
on $\{w_{x,y}^{(n)}:x,y\in V_n\}$.

 \paragraph{{\bf Assumption (Wea.($\theta$))}}
 {\it
Suppose that for some fixed $\theta\in (0,1)$,
there exist constants $R_0\ge1$, $c_0\in (1/2,1)$ and $C_3>0$
such that the following conditions hold.
\begin{itemize}
\item [(i)] For any $n \ge1$,
$R_0<R<C_0'nr_F$ and ${R^{\theta}/2}\le r \le 2R$,
\begin{equation}\label{a4-3-1}
\sup_{x \in B_{V_n}(0,6R)}\sum_{y\in V_n: \rho_n(y,x)\le r}
\frac{w_{x,y}^{(n)}}{\rho_n(x,y)^{d+\alpha-2}}\le C_3r^{2-\alpha},
\end{equation}
\begin{equation}\label{a4-3-1a}
m_n(B_{V_n,z}^{w^{(n)}}(x,r))\ge c_0 m_n(B_{V_n}(x,r)),\quad \forall\ x,z\in B_{V_n}(0,6R),
\end{equation}
and
\begin{equation}\label{a4-3-2}
\sup_{x\in B_{V_n}(0,6R)\cap V_n}\sum_{y \in V_n: \rho_n(y,x)\le
c_*r,w^{(n)}_{x,y}>0} (w_{x,y}^{(n)})^{-1}\le C_3r^d,
\end{equation}
where $c_*=8c_V^{2/d}$ and $c_V$ is the constant in \eqref{l4-1-2}.

When $\alpha\in (0,1)$, \eqref{a4-3-1} can be replaced by
\begin{equation}\label{e5-1}
\sup_{x \in B_{V_n}(0,6R)}\sum_{y \in V_n: \rho_n(y,x)\le
r} \frac{w_{x,y}^{(n)}}{\rho_n(x,y)^{d+\alpha-1}}\le
C_3r^{1-\alpha}.
\end{equation}

\item[(ii)] For every $n\ge1$,
$R_0<R<C_0'nr_F$ and $r\ge {R^{\theta}/2}$,
\begin{equation}\label{a4-3-3}
\sup_{x \in B_{V_n}(0,6R)}\sum_{y \in V_n:
\rho_n(x,y)>r}\frac{w_{x,y}^{(n)}}{\rho_n(x,y)^{d+\alpha}}\le
C_3r^{-\alpha}.
\end{equation}
\end{itemize}
}

\medskip

The main result of this section is as follows. It is in some sense a
generalization of \cite[Proposition 2.8]{CCK}. Indeed, in our case
we have the H\"older regularity of caloric
functions only in the region
$(C_0^{-1}|s-t|)^{1/\alpha}+\rho(x,y)\ge 2r^{\delta}$ (see Theorem
\ref{T:holder}), hence more careful arguments are required.

\begin{theorem}\label{t3-1}
Suppose that Assumptions {\bf (MMS)}, {\bf(Dir.)}, {\bf(Mos.)} and
{\bf(Wea.($\theta$))} hold for some $\theta\in (0,1)$.
Then, for any $\{x_n\in V_n: n\ge 1\}$
such that $\lim_{n \rightarrow \infty}x_n=x$ for some $x \in F$,
it holds that for every $T>0$, $\Pp^{(n)}_{x_n}$ converges weakly to
$\Pp_{x}^Y$ on the space of all probability measures on
$\D([0,T];F)$, where $\Pp^{(n)}_{x_n}$ and $\Pp_x^Y$ denote the laws of
$X_{\cdot}^{(n)}$ and $Y_{\cdot}$ on $\D([0,T]; F)$, respectively.
\end{theorem}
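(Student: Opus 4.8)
The plan is to derive the weak convergence from three inputs already available: the generalized Mosco convergence $\hat P^{(n)}_tf\to P^Y_tf$ in $L^2(F;m)$ (Proposition \ref{p3-1}), the exit‑time/tightness estimate (Theorem \ref{exit}, or Proposition \ref{L:tight} when $\alpha\in(0,1)$), and the H\"older regularity of parabolic functions (Theorem \ref{T:holder}); this is the scheme of \cite{CKK} and \cite[Proposition 2.8]{CCK}, the novelty being that \eqref{t3-2-1} holds only outside a shrinking mesoscopic scale. \emph{Step 1.} I would first pass from $X^{(n)}$ on $(V_n,\rho)$ to the scaling process $\tilde X^{(n)}$ on $(V_n,\rho_n,\tilde m_n)$ via $\big(\mathbf{P}_n(X^{(n)}_t)\big)_{t\ge0}\overset{d}{=}\big(\tilde X^{(n)}_{n^\alpha t}\big)_{t\ge0}$. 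By Lemma \ref{l4-1} the triple $(V_n,\rho_n,\tilde m_n)$ satisfies \eqref{al2-0}--\eqref{al2-1} with $r_G\asymp n$; since $\tilde w^{(n)}_{x,y}\asymp w^{(n)}_{x,y}$ and $\rho_n\asymp n\rho$ by \eqref{a3-3-1}, Assumption {\bf (Wea.)} is exactly the statement that $(V_n,\rho_n,\tilde m_n,\tilde w^{(n)})$ verifies Assumption {\bf (Exi.)} (resp.\ {\bf (Exi.')} when $\alpha\in(0,1)$) with constants independent of $n$. As all constants in Theorems \ref{exit} and \ref{T:holder} are independent of $r_G$, those theorems apply to $\tilde X^{(n)}$ uniformly in $n$, and translating through the dictionary $\rho\leftrightarrow n^{-1}\rho_n$, $t\leftrightarrow n^{-\alpha}t$ gives, uniformly in $n$ and in starting points ranging over a fixed compact set: (a) for every $\varepsilon'>0$, $\Pp^{(n)}_z\big(\tau_{B_F(z,\varepsilon)}\le h\big)\le\varepsilon'+C(\varepsilon')\,h\varepsilon^{-\alpha}$ together with $\Ee^{(n)}_z[\tau_{B_F(z,\varepsilon)}]\asymp\varepsilon^\alpha$; and (b) the H\"older bound \eqref{t3-2-1} for parabolic functions of the space--time process of $X^{(n)}$, valid once the macroscopic separation exceeds $c\,n^{\delta-1}$, with $n^{\delta-1}\to0$.

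\emph{Step 2 (tightness).} From (a), $\Pp^{(n)}_{x_n}\big(\sup_{s\le T}\rho(X^{(n)}_s,x_n)>R\big)=\Pp^{(n)}_{x_n}(\tau_{B_F(x_n,R)}\le T)\le\varepsilon'+C(\varepsilon')TR^{-\alpha}$, so the family stays in a large ball up to time $T$ with probability close to $1$, uniformly in $n$; in particular the one‑dimensional marginals are tight in $F$. Moreover, for stopping times $\sigma_n\le T$ and $h_n\downarrow0$, the strong Markov property gives $\Pp^{(n)}_{x_n}\big(\rho(X^{(n)}_{\sigma_n+h_n},X^{(n)}_{\sigma_n})>\varepsilon\big)\le\sup_z\Pp^{(n)}_z(\tau_{B_F(z,\varepsilon)}\le h_n)\le\varepsilon'+C(\varepsilon')h_n\varepsilon^{-\alpha}\to\varepsilon'$, and $\varepsilon'$ is arbitrary; hence the Aldous tightness criterion yields tightness of $\{\Pp^{(n)}_{x_n}\}$ in $\D([0,T];F)$.

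\emph{Step 3 (finite-dimensional distributions).} Since $p^Y$ is jointly continuous, $Y$ has no fixed times of discontinuity, so — given Step 2 — it suffices to prove $\Ee^{(n)}_{x_n}\big[\prod_{i=1}^kf_i(X^{(n)}_{t_i})\big]\to\Ee^Y_x\big[\prod_{i=1}^kf_i(Y_{t_i})\big]$ for $0\le t_1<\cdots<t_k\le T$ and $f_i\in C_b(F)$. By the Markov property, an induction on $k$ reduces this to the claim $(\star)$: whenever $g_n$ are uniformly bounded functions on $V_n$ with $g_n(z_n)\to g(z)$ for every $z_n\in V_n$, $z_n\to z$, and $g\in C_b(F)$, one has $P^{(n)}_tg_n(x_n)\to P^Y_tg(x)$ for all $t>0$ (the inductive step peels off the first factor, using the joint continuity of $p^Y$ to see that $z\mapsto\Ee^Y_z[\prod_{i\ge2}f_i(Y_{t_i-t_1})]$ is in $C_b(F)$). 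Writing $P^{(n)}_tg_n(x_n)=P^{(n)}_t(g_n-g)(x_n)+P^{(n)}_tg(x_n)$ and bounding the first term by $\sup_{z\in B_F(x,M)\cap V_n}|g_n(z)-g(z)|+C\,\Pp^{(n)}_{x_n}(X^{(n)}_t\notin B_F(x,M))$ (the tail small by (a) and Step 2, the first part small by local uniform convergence), $(\star)$ reduces to the base case $\hat P^{(n)}_tg(x_n)=P^{(n)}_t(\pi_ng)(x_n)\to P^Y_tg(x)$ for a fixed $g\in C_b(F)$, where I replaced $g|_{V_n}$ by $\pi_ng$ at the cost of $\mathrm{osc}_n(g)\to0$.

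\emph{Step 4 (from $L^2$ to the point $x$; main difficulty).} By Proposition \ref{p3-1}, $\hat P^{(n)}_tg\to P^Y_tg$ in $L^2(F;m)$; along any subsequence pass to a further subsequence $(n_j)$ and a full‑measure set $N\subseteq F$ with $\hat P^{(n_j)}_tg(z)\to P^Y_tg(z)$ for all $z\in N$, and recall $P^Y_tg$ is continuous. Fix $\eta>0$ and choose $z_*\in N$ with $0<\rho(z_*,x)<\eta$. For $j$ large, $\rho(x_{n_j},z_*)$ is bounded below by the fixed positive number $\tfrac12\rho(x,z_*)$, hence eventually exceeds $c\,n_j^{\delta-1}$, so the restricted H\"older bound (b) — applied to the parabolic function $(s,z)\mapsto P^{(n_j)}_{\,T_0+t-s}(\pi_{n_j}g)(z)$ on a fixed macroscopic space--time cylinder (a buffer $T_0>0$, chosen small depending on $t$ and $\alpha$, makes the time slice $s=T_0$ interior and keeps the outer cylinder inside the parabolicity domain), comparing the values at $x_{n_j}$ and at the cell of $z_*$ — gives $\big|\hat P^{(n_j)}_tg(x_{n_j})-\hat P^{(n_j)}_tg(z_*)\big|\le C\|g\|_\infty\eta^\beta$. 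Therefore $\limsup_{j}\big|\hat P^{(n_j)}_tg(x_{n_j})-P^Y_tg(x)\big|\le C\|g\|_\infty\eta^\beta+\sup_{\rho(z,x)<\eta}|P^Y_tg(z)-P^Y_tg(x)|\to0$ as $\eta\to0$; since the limit is subsequence‑independent, $\hat P^{(n)}_tg(x_n)\to P^Y_tg(x)$. Combining Steps 2--4 with the standard criterion (tightness plus convergence of finite‑dimensional distributions at all continuity times of a c\`adl\`ag limit with no fixed discontinuities, together with $t=0$ which is trivial since $x_n\to x$) yields $\Pp^{(n)}_{x_n}\to\Pp^Y_x$ weakly on $\D([0,T];F)$. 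I expect Step 4 to be the main obstacle: \eqref{t3-2-1} degenerates at the mesoscopic scale $r^\delta$, so the $L^2$‑convergence of $\hat P^{(n)}_tg$ cannot be upgraded to pointwise convergence at a point directly; the remedy — comparing the value at $x_n$ with that at a \emph{macroscopically} close good point, where the restricted estimate already applies for all large $n$ — is precisely where the argument departs from \cite[Proposition 2.8]{CCK}. A secondary, routine point is the constant bookkeeping of Step 1 (that {\bf (Wea.)} transports {\bf (Exi.)} to $(V_n,\rho_n,\tilde m_n,\tilde w^{(n)})$ uniformly in $n$).
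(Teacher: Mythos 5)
Your proposal follows essentially the same route as the paper's proof: transport to the scaled process $\tilde X^{(n)}$ to apply Theorem~\ref{exit} and Theorem~\ref{T:holder} with constants uniform in $n$, Aldous' criterion for tightness, reduction to one-time marginals by the Markov property and $C_\infty$-Feller property, and upgrading the $L^2$-Mosco convergence of Proposition~\ref{p3-1} to pointwise convergence at $x_n\to x$ by extracting an a.e.\ convergent subsequence and applying the restricted H\"older estimate \eqref{t3-2-1} between $x_n$ and a fixed nearby good point (the paper's $y_k$), whose separation from $x_n$ is macroscopic and hence eventually exceeds the shrinking mesoscopic scale $r_n^{\delta}$. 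The only small caveat is that your ``base case'' function $g$ should be taken in $C_\infty(F)\cap L^2(F;m)$ (as the paper does) so that the Mosco $L^2$-convergence applies; this is routine to arrange via tightness and compactly supported approximation.
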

\begin{proof}
Throughout the proof, we denote the law of $(X_t^{(n)})_{t\ge 0}$ on $\D([0,\infty);F)$ and  that of $(\tilde X_t^{(n)})_{t\ge 0}$
on $\D([0,\infty);V_n)$ by
$\Pp_{\cdot}^{(n)}$ and $\tilde\Pp_{\cdot}^{(n)}$, respectively. Let  $X_{\cdot}^{(n)}$ and $\tilde X_{\cdot}^{(n)}$ be
their associated canonical paths.

Suppose that $\{x_n\in V_n: n\ge 1\}$ is a sequence with $\lim_{n
\rightarrow \infty}x_n=x$ for some $x \in F$.

{\bf Step (1):}  We show
that for each fixed $T>0$, $\{\Pp_{x_n}^{(n)}\}_{n\ge1}$ is tight on
$\D([0,T];F)$. To prove the tightness of
$\{\Pp_{x_n}^{(n)}\}_{n\ge1}$, it suffices to verify that
\begin{equation}\label{t3-1-0a}
\lim_{R \rightarrow \infty}\limsup_{n \rightarrow \infty}\Pp_{x_n}^{(n)}
\big(\sup_{s\in [0,T]}\rho(0,X_{s}^{(n)})>R\big)=0,
\end{equation}
and for any sequence of stopping time $\{\tau_n\}_{n\ge1}$
such that $\tau_n\le T$ and any sequence
$\{\varepsilon_n\}_{n\ge1}$ with $\lim_{n \rightarrow
\infty}\varepsilon_n=0$,
\begin{equation}\label{t3-1-1}
\limsup_{n \rightarrow \infty}\Pp_{x_n}^{(n)}\Big(
\rho\big(X_{\tau_n+\varepsilon_n}^{(n)}, X_{\tau_n}^{(n)}\big)>\eta\Big)=0,\quad \eta>0.
\end{equation} See, e.g., \cite[Theorem 1]{Ad}.

When $r_F<\infty$, \eqref{t3-1-0a} holds trivially. Now, we are going to prove \eqref{t3-1-0a} for the case that
$r_F=\infty$. As we mentioned above, $\big(\mathbf{P}_n(X^{(n)}_t)\big)_{t\ge 0}$ has the same distribution as
$\big(\tilde X^{(n)}_{n^{\alpha}t}\big)_{t\ge 0}$, where $(\tilde X_t^{(n)})_{t\ge 0}$ is a strong Markov process generated by
the Dirichlet form $(\tilde D_{V_n},\tilde \F_{V_n})$. Therefore,
\begin{equation}\label{t3-1-4}
\begin{split}
\Pp_{x_n}^{(n)} \big(\sup_{s\in [0,T]}\rho(X_{s}^{(n)},0)>R\big)&=\Pp_{x_n}^{(n)} \Big(\sup_{s\in [0,T]}\rho\big(\mathbf{P_n}(X_{s}^{(n)}),0\big)>R\Big)\\
&=\tilde \Pp_{x_n}^{(n)} \Big(\sup_{s \in
[0,n^{\alpha}T]}\rho(\tilde X_s^{(n)},0)>R\Big)\\
&\le \tilde \Pp_{x_n}^{(n)} \Big(\sup_{s \in
[0,n^{\alpha}T]}\rho_{n}(\tilde X_s^{(n)},0)>c_1^*nR\Big),
\end{split}
\end{equation}
where the last inequality follows the fact that $\rho_{n}(x,y)\ge c_1^*n\rho(x,y)$ for all $x,y \in V_n$, thanks to \eqref{a3-3-1}.

On the other hand, under assumption {\bf(Wea.)}, it is easy to
verify that assumption {\bf(Exi.)} (or assumption {\bf(Exi')} when
$\alpha\in (0,1)$) holds for conductances $\tilde w_{x,y}^{(n)}$ on the space $(V_n,\rho_n,\tilde m_n)$
{with associated constants independent of $n$.} {Combining this fact
with \eqref{l4-1-1} and \eqref{l4-1-2}, we can apply} Theorem
\ref{exit} and Remark \ref{R:H}(1) (or Remark \ref{R:H}(2)) to derive that for any fixed
$\theta'\in (\theta,1)$, there exist constants {$\delta\in
(\theta,1)$} and $R_1\ge1$, such that for all $n\ge1$,
$R_1<R<C_0'r_Fn$ and ${R^{\delta}}\le r \le R$,
\begin{equation}\label{t3-1-3a}
\sup_{x \in B_{V_n}(0,2R)\cap V_n}\tilde
\Pp_{x}^{(n)}\big(\tau_{B_{V_n}(0,r)} (\tilde X^{(n)}_{\cdot})\le
t\big)\le c_1\Big(\frac{t}{r^{\alpha}}\Big)^{1/3}, \quad \forall\
t\ge r^{\theta'\alpha},
\end{equation} where $B_{V_n}(x,r)=\{z \in V_n: \rho_{n}(z,x)\le
r\},$ $\tau_{B_{V_n}(0,r)}(\tilde X^{(n)}_{\cdot})$ is the first exit time from $B_{V_n}(0,r)$ of the process $\tilde X^{(n)}_\cdot$, and $c_1>0$ is independent of $R_1$, $n$, $r$, $R$ and $r_F$.

Suppose that $\rho(x_n,0)\le K$ for all $n\ge 1$ and some constant
$K>0$. Note that, also thanks to \eqref{a3-3-1}, $\rho_{n}(x_n,0)\le
c_2^*n\rho(x_n,0)\le c_2^*nK$. For every fixed $R>2c_2^*K/c^*_1$ and
$T>0$, we have
$R_1<c^*_1nR< C_0'nr_F$ (since $r_F=\infty)$ and
$n^{\alpha}T>\big({c^*_1nR}/{2}\big)^{\theta'\alpha}$ for $n$ large
enough. Thus, by \eqref{t3-1-4} and \eqref{t3-1-3a},
\begin{align*}
\Pp_{x_n}^{(n)} \big(\sup_{s\in [0,T]}\rho(X_{s}^{(n)},0)>R\big)
&\le \tilde \Pp_{x_n}^{(n)} \big(\sup_{s\in [0,n^{\alpha}T]} \rho_{n}(\tilde X_{s}^{(n)},0)>c^*_1nR\big)\\
&\le \sup_{z \in B_{V_n}(0,c_2^*nK)\cap V_n}\tilde \Pp_{z}^{(n)}\big(\tau_{B_{V_n}(0,c^*_1nR)}(\tilde X_{\cdot}^{(n)})\le n^{\alpha}T\big)\\
&\le  \sup_{z \in
B_{V_n}(0,{c^*_1nR}/{2})\cap V_n}\tilde \Pp_{z}^{(n)}\big(\tau_{B_{V_n}(z,{c_1^*nR}/{2})}(\tilde X_{\cdot}^{(n)})\le
n^{\alpha}T\big) \\
&\le
c_1\left(\frac{n^{\alpha}T}{(c^*_1nR/2)^{\alpha}}\right)^{1/3}=
c_2\left(\frac{T}{R^{\alpha}}\right)^{1/3},
\end{align*}
which implies
\begin{align*}
&\lim_{R \rightarrow \infty}\limsup_{n \rightarrow
\infty}\Pp_{x_n}^{(n)} (\sup_{s\in [0,T]}\rho(X_{s}^{(n)},0)>R) \le
\lim_{R \rightarrow
\infty}c_2\left(\frac{T}{R^{\alpha}}\right)^{1/3}=0.
\end{align*} This proves \eqref{t3-1-0a}.

Next, let $\{\tau_n\}_{n\ge1}$ be a sequence of stopping time
such that $\tau_n\le T$, and $\{\varepsilon_n\}_{n\ge1}$ be a sequence such that $\lim_{n\to\infty}\varepsilon_n=0$. By the strong Markov property, for every $\eta>0$ small enough and
$R\ge1$ large enough,
\begin{align*}
& \Pp_{x_n}^{(n)}\big(\rho(X_{\tau_n+\varepsilon_n}^{(n)},X_{\tau_n}^{(n)})>\eta\big)= \Ee_{x_n}^{(n)}\big[\Pp^{(n)}_{X_{\tau_n}^{(n)}}\rho(X_{\varepsilon_n}^{(n)},X_{0}^{(n)})>\eta)\big]\\
&\le \sup_{z \in B_F(0,R)\cap V_n}\Pp_{z}^{(n)}\big(\rho(X_{\varepsilon_n}^{(n)},X_{0}^{(n)})>\eta\big)+\Pp_{x_n}^{(n)}
\big(\sup_{s\in [0,T]}\rho(X_{s}^{(n)},0)>R\big)\\
&\le \sup_{z \in B_{V_n}(0,(c_2^*nR)\wedge (C_0'nr_F))\cap V_n}\tilde \Pp_{z}^{(n)}\big(
\rho_{n}(\tilde X_{n^{\alpha}\varepsilon_n}^{(n)},\tilde X_{0}^{(n)})>c_1^*n\eta\big)+\Pp_{x_n}^{(n)}
\big(\sup_{s\in [0,T]}\rho(X_{s}^{(n)},0)>R\big)\\
&\le \sup_{z \in B_{V_n}(0,(c_2^*nR)\wedge (C_0'nr_F))\cap V_n}\!\!\tilde \Pp_{z}^{(n)}\big(\tau_{B_{V_n}(z,c_1^*n\eta)}(\tilde X_{\cdot}^{(n)}\big)\le
n^{\alpha}\varepsilon_n)\!+\!\Pp_{x_n}^{(n)} \big(\sup_{s\in
[0,T]}\rho(X_{s}^{(n)},0)>R\big),
\end{align*}
where in the second inequality we have used the fact that $c_1^*n\rho(x,y)\le \rho_{n}(x,y)\le c_2^*n\rho(x,y)$ for $x,y\in V_n$, due to
\eqref{a3-3-1}. Taking $n$ large enough and $\eta$ small enough such that $c_2^*nR>R_1$ and ${(c_2^*nR)^{\delta}}\le c_1^*n\eta\le
(c_2^*nR)\wedge (C_0'nr_F)$. Then, it
follows from \eqref{t3-1-3a} that
\begin{align*}
&\sup_{z \in B_{V_n}(0,(c_2^*nR)\wedge (C_0'nr_F))\cap V_n}\tilde \Pp_{z}^{(n)}\big(\tau_{B_{V_n}(z,c^*_1n\eta )}(\tilde X_{\cdot}^{(n)})\le
n^{\alpha}\varepsilon_n)\\
&\le \sup_{z \in B_{V_n}(0,(c_2^*nR)\wedge (C_0'nr_F))\cap V_n}\tilde \Pp_{z}^{(n)}\big(\tau_{B_{V_n}(z,c_1^*n\eta )}(\tilde X_{\cdot}^{(n)})\le (n^{\alpha}\varepsilon_n)\vee (2c_1^*n\eta)^{\theta'\alpha}\big)\\
&\le c_1\left(\frac{(n^{\alpha}\varepsilon_n)\vee
(2c_1^*n\eta)^{\theta'\alpha}}{(c_1^*n\eta)^{\alpha}}\right)^{1/3} \le
c_3\left((\varepsilon_n\eta^{-\alpha})\vee
(n\eta)^{-(1-\theta')\alpha}\right)^{1/3}.
\end{align*}
Combining both
estimates above with \eqref{t3-1-0a},
we obtain \eqref{t3-1-1}.

{\bf Step (2):} Now it suffices to show that any finite dimensional distribution of
$\Pp_{x_{n}}^{(n)}$ converges to that of $\Pp^Y_{x}$. We first claim
that for any fixed $t>0$, $f\in C_{\infty}(F)\cap
L^2(F;m)$ and a sequence $\{z_n: z_n \in
V_n\}_{n=1}^{\infty}$ with $\lim_{n \rightarrow \infty}z_n=z\in F$,

\begin{equation}\label{t3-1-1a}
\lim_{n \rightarrow \infty}E_n\big(P_t^{(n)}f\big)(z_n)=P^Y_{t}f(z),
\end{equation}
where $C_\infty(F)$ denotes the set of continuous functions on $F$
vanishing at infinity.

Indeed,
according to assumption {\bf(Mos.)},
Proposition \ref{p3-1} and \eqref{a3-3-2a},
there are a subsequence of $\{\hat P_t^{(n)}f: n\ge 1\}$ (we still
denote it by $\{\hat P_t^{(n)}f:n\ge 1\}$ for simplicity) and a
sequence
{$\{y_k\in \cap_{n\ge N_0}\cup_{x\in V_n}\text{Int}(U_n(x)):k\ge 1\}$} with some $N_0\ge1$
such that
(i) $y_k\neq z$ and $\lim_{k \rightarrow \infty}y_k=z$; (ii) for every $k\ge1$,
\begin{equation}\label{t3-1-2}
\lim_{n \rightarrow \infty}\hat P_t^{(n)} f(y_k)=P_{t}^Y f(y_k).
\end{equation}
For every $k\ge 1$ and $t>0$, we have
\begin{equation}\label{t3-1-2a}
\begin{split}
&|E_n(P_t^{(n)} f)(z_n)-P_{t}^Y f(z)|\\
&\le |\hat P_t^{(n)} f(y_k)-P_{t}^Y f(y_k)|+
|\hat P_t^{(n)} f(y_k)- E_n(P_t^{(n)} f)(y_k)|\\
&\quad+|E_n(P_t^{(n)} f)(y_k)-E_n(P_t^{(n)} f)(z_n)|
+|P_{t}^Y f(z)-P_{t}^Y f(y_k) |\\
&=: |\hat P_t^{(n)} f(y_k)-P_{t}^Y f(y_k)|+\sum_{i=1}^3 J_{i,n,k}.
\end{split}
\end{equation}

Recall that $\hat P_t^{(n)} f(x)=E_n(P_t^{(n)}(\pi_n(f)))(x)$ for all $x\in F$. By the definition of $\pi_n$,
$$\lim_{n \rightarrow \infty}\sup_{z\in V_n}|\pi_n (f)(z)-f(z)|=0$$ for any $f\in C_\infty(F).$
Hence,
\begin{align*}
\lim_{n \rightarrow \infty}\sup_{k \ge1}J_{1,n,k}&= \lim_{n
\rightarrow \infty}\sup_{k \ge1}
|E_n (P_t^{(n)} (\pi_n (f)))(y_k)- E_n(P_t^{(n)} f)(y_k)|\\
&\le \lim_{n \rightarrow \infty}\sup_{z\in V_n}|\pi_n f(z)-f(z)|=0,
\end{align*} where in the last inequality we used the contractivity of $(P_t^{(n)})_{t\ge1}$ in $L^\infty(V_n; m_n)$.

In the following, for any $x\in \cap_{n\ge N_0}\cup_{x\in V_n}\text{Int}(U_n(x))$ and $n\ge N_0$,
let $[x]_n\in V_n$ be
such that $x\in U_n([x]_n)$ and $\rho(x,[x]_n)\le c_3^*n^{-1}$, due
to (3) in Lemma \ref{L:ac}. For any $n\ge N_0$ and $z\in \cap_{n\ge N_0}\cup_{x\in V_n}\text{Int}(U_n(x))$,
noticing that $\big(\tilde X^{(n)}_{n^{\alpha}t}\big)_{t\ge 0}$ has
the same distribution as $\big(\mathbf{P_n}(X^{(n)}_t)\big)_{t\ge
0}$,
\begin{align*}
E_n(P_t^{(n)}f)(z)&=P_t^{(n)}f([z]_n)=\Ee_{[z]_n}^{(n)}[f(X_t^{(n)})]=\tilde \Ee_{[z]_n}^{(n)}[f(\tilde X^{(n)}_{n^{\alpha}t})]=\tilde P_{n^{\alpha}t}^{(n)}f([z]_n),
\end{align*}
where $\tilde P_t^{(n)}f(\cdot):=\tilde \Ee_{\cdot}^{(n)}[f(\tilde X^{(n)}_t)]$ is the Markov semigroup of
$\tilde X^{(n)}:=(\tilde X_t^{(n)})_{t\ge 0}$.
{As mentioned above, due to assumption {\bf(Wea.($\theta$))}
and Lemma \ref{l4-1},}
we can apply Theorem \ref{T:holder} and Remark \ref{R:H}(1) (also thanks to Remark \ref{R:H}(2))
to obtain that there are constants
{$\delta \in (\theta,1)$ and $R_1\ge1$} such that for all $R_1<R<C'_0nr_F$, \eqref{t3-2-1} holds for every $(\tilde X^{(n)}_t)_{t\ge 0}$ and with associated constants independent of $n$.
Let $C_0>0$ be the constant in \eqref{l2-2-1a}.
For fixed $T>0$, we define $H_{T,n,f}(s,x)=\tilde P_{1+n^{\alpha}T-s}^{(n)}f(x)$, which
is a caloric
function on
$Q_{V_n}\big(0,y,(2^{-1}C_0^{-1}n^{\alpha}T)^{1/\alpha}\big)$ for each $y\in V_n$. Take $K$ large enough such that $K>(2^{-1}C_0^{-1}t)^{1/\alpha}$, $R_1<nK<C_0'nr_F$ and
$z_n\in B_{V_n}(0,nK)$ for all $n\ge 1$.
According the facts that $y_k\rightarrow z$ as $k\to\infty$ and $y_k\neq z$ for all $k\ge1$,
for any {fixed} $t>0$, every
$k \ge 1$ and $n\ge N_0$ large enough, it holds that
$0<\varepsilon_k<\rho(y_k,z_n)\le \tilde r_0\le
(4c_2^*)^{-1}((2^{-1}C_0^{-1}t)^{1/\alpha}\wedge (2^{-1}C_0'r_F))$, where $\tilde r_0$ and $\varepsilon_k$ are positive constants with
$\lim_{k \rightarrow \infty}\varepsilon_k=0$,
and $c_2^*>0$ is the constant such that
$\rho_{n}(x,y)\le c_2^*n\rho(x,y)$ for any $x,y\in V_n$.
Furthermore, for these fixed {$k$ and $t$}, we take $n$ large enough such that ${(nK)^{\delta}\le r_n\le nK}$
and $n\varepsilon_k\ge 4(c_1^*)^{-1}r_n^{\delta}$, where $r_n:=(4c_2^*)^{-1}n\tilde r_0$.
Hence,
\begin{align*}
\rho_{n}\big(z_n,[y_k]_n\big)&\ge c^*_1n\rho\big(z_n,[y_k]_n\big)\ge c_1^*n
\big(\rho\big(z_n,y_k\big)-\rho\big(y_k,[y_k]_n\big)\big)\\
&\ge  c_1^*n\varepsilon_k-2c^*_1c_3^*\ge r_n^{\delta}
\end{align*} and
\begin{align*}
\rho_{n}\big(z_n,[y_k]_n\big)&\le c_2^*n \rho\big(z_n,[y_k]_n\big)\le
c_2^*n\big(\rho\big(z_n,y_k\big)+\rho\big(y_k,[y_k]_n\big)\big)\\
&{ \le 4^{-1}r_n+2c_2^*c_3^*\le 2^{-1}r_n},
\end{align*}
where we used the fact that $\rho(y_k,[y_k]_n)\le c_3^*n^{-1}$ for all $k$.

Then as a summary, $(nK)^{\delta}\le
r_n\le nK$, $z_n\in B_{V_n}(0,nK)$, and
$z_n$, $[y_k]_n \in Q_{V_n}\big(0,z_n, r_n\big)$
with $\rho_{n}\big(z_n, [y_k]_n\big)\ge r_n^{\delta}$.
Now, applying
\eqref{t3-2-1} to the caloric function $H_{t,n,f}$ on
$Q_{V_n}(0,z_n,r_n)$, we can obtain that
\begin{align*}
&|\tilde P_{n^{\alpha}t}^{(n)}f([y_k]_n)-\tilde P_{n^{\alpha}t}^{(n)}f(z_n)|\\
&=|H_{t,n,f}(1,[y_k]_n)-H_{t,n,f}(1,z_n)|\le c_4\|\tilde
P_{n^{\alpha}t}^{(n)}f\|_{\infty}\bigg|\frac{\rho_{n}\big([y_k]_n,z_n\big)}{r_n}\bigg|^{\beta}\\
&\le c_{5}(t)\|f\|_{\infty}\rho\big([y_k]_n,z_n\big)^{\beta}\le c_{6}(t)\|f\|_{\infty}\big(\rho(y_k,z_n)^{\beta}+n^{-\beta}\big).
\end{align*}
In particular, constants $c_4$, $c_5(t)$ and $c_6(t)$ are independent of $n$, since
\eqref{t3-2-1} holds for $(\tilde X^{(n)}_t)_{t\ge 0}$ with associated constants independent of $n$.

This yields immediately that
\begin{align*}
\limsup_{n \rightarrow \infty}J_{2,n,k}&=
\limsup_{n \rightarrow \infty}|\tilde P_{n^{\alpha}t}^{(n)}f([y_k]_n)-\tilde P_{n^{\alpha}t}^{(n)}f(z_n)|\\
&\le c_{6}(t)\limsup_{n \rightarrow \infty}\|f\|_{\infty}\big(\rho(y_k,z_n)^{\beta}+n^{-\beta}\big)=c_{7}(t)\|f\|_{\infty}\rho(y_k,z)^{\beta}.
\end{align*}

According to \cite[Theorem 4.14]{CK},
$
J_{3,n,k}\le c_{8}(t)\|f\|_{\infty}\rho(y_k,z)^{\beta}.
$

Combining all estimates with \eqref{t3-1-2a} and \eqref{t3-1-2}, we
arrive at the statement that
$$
\limsup_{n \rightarrow \infty}\big|E_n(P_t^{(n)} f)(z_n)-P_{t}^Y
f(z)\big|\le c_{9}(t)\|f\|_{\infty}\rho(y_k,z)^{\beta},
$$
where $c_9(t)>0$ is independent of $k$. Note that $k$ is arbitrary, letting $k\rightarrow \infty$ in the
last inequality, then we  prove \eqref{t3-1-1a}. In particular, according to \cite[Lemma 2.7]{CCK}, \eqref{t3-1-1a} implies
that for every compact set
$K\subseteq F$,
\begin{equation}\label{t3-1-3}
\limsup_{n \rightarrow \infty}\sup_{x \in K}|E_n(P_t^{(n)}
f)(x)-P_{t}^Y f(x)|=0.
\end{equation}

Next, for any $f_1,f_2\in C_\infty(F)$, $0<s<t\le T$ and any
sequence $x_n \in V_n$ with $\lim_{n \rightarrow \infty}x_n=x\in F$,
\begin{align*}
&\Ee_{x_n}^{(n)}\big[f_1(X_{s}^{(n)})f_2(X_{t}^{(n)})\big]=
\Ee_{x_n}^{(n)}\big[f_1(X_{s}^{(n)})P_{t-s}^{(n)}f_2(X_{s}^{(n)})\big]\\
&=\Ee_{x_n}^{(n)}\big[f_1(X_{s}^{(n)})P^Y_{t-s}f_2(X_{s}^{(n)})\big]+
\Ee_{x_n}^{(n)}\big[f_1(X_{s}^{(n)})\big(P_{t-s}^{(n)}f_2(X_{s}^{(n)})-P^Y_{t-s}f_2(X_{s}^{(n)})\big)\big]\\
&=:J_{1,n}+J_{2,n}.
\end{align*}
Set $g(z)=f_1(z)P^Y_{t-s}f_2(z)$. Then $g\in C_\infty(F)$, due to
the $C_\infty$-Feller property of the process $Y$, see \cite[Theorem
1.1]{CK}. Then, according to \eqref{t3-1-1a}, we have
$$
\lim_{n \rightarrow \infty}J_{1,n}=\lim_{n \rightarrow
\infty}P_t^{(n)}g(x_n)=P_{s}^Yg(x)=
\Ee_{x}^Y\big[f_1(Y_{s})f_2(Y_{t})\big].
$$
On the other hand, for any $t>0$, $R>2K$ and $n$ large enough,
\begin{align*}
J_{2,n}&\le \|f_1\|_{\infty}\sup_{z \in B_F(0,R)}
\big|E_n(P_{t-s}^{(n)}f_2)(z)-P^Y_{t-s}f_2(z)\big|+\|f_1\|_{\infty}\|f_2\|_{\infty}\Pp_{x_n}^{(n)}\big(\sup_{s
\in [0,t]}\rho(X_s^{(n)},0)>R\big),
\end{align*}
By \eqref{t3-1-0a} and \eqref{t3-1-3}, we let $n \rightarrow \infty$
and then $R\rightarrow \infty$ in the last inequality, yielding that
$ \lim_{n \rightarrow \infty}J_{2,n}=0.
$
Combining all above estimates, we prove that
$$
\lim_{n \rightarrow \infty}
\Ee_{x_n}^{(n)}\big[f_1(X_{s}^{(n)})f_2(X_{t}^{(n)})\big]=\Ee^Y_{x}\big[f_1(Y_{s})f_2(Y_{t})\big].
$$
Following the same arguments as above and using the induction procedure,
we can obtain from \cite[Chapter 3; Proposition 4.4 and Theorem 7.8(b)]{EK} that any finite
dimensional distribution of $\Pp_{x_n}^{(n)}$ converges to $\Pp^Y_{x}$. The proof is finished.
\end{proof}

\begin{remark}\label{r4-6} As shown in the proof of Theorem \ref{t3-1} above, the role of adopting the
generalized Mosco convergence is to identify the limit process in
the $L^2$ sense. Actually, according to \cite[Theorem 5.1]{CKK},
under Assumption {\bf(Mos.)}\ only,  any finite dimensional
distribution of $X^{(n)}$ converges to that of $Y$, when the initial
distribution is absolutely continuous  with respect to the reference
measure $m$. Thus, Theorem \ref{t3-1} improves this weak convergence
for any initial distribution. We emphasize that such improvement is
highly non-trivial, see \cite{HK} for discussions on the uniformly
elliptic case by using heat kernel estimates. Here, we will make use
of the H\"{o}lder regularity of caloric
functions on large scale
(Theorem \ref{T:holder}). This is much weaker than the approach used
in \cite[Proposition 2.8]{CCK}, where the H\"older regularity of
caloric
functions is assumed to be satisfied on the whole space.
\end{remark}

\section{Random conductance model: quenched invariance principle}\label{section5}

In this section, we
will apply results from Section \ref{section3} to study the quenched invariance principle for
a few random conductance models with stable-like jumps.

\subsection{Quenched invariance principle for stable-like processes on $d$-sets.}
\label{subsu5-1}
Let $(F,\rho,m)$ be a metric measure space satisfying assumption {\bf(MMS)}.
By Lemma \ref{L:ac},
we have a sequence of graphs with measure $\{(V_n,\rho_n, m_n): n\ge 1\}$ that approximate
$(F,\rho,m)$. In this
part, we further assume the following:
\begin{itemize}
\item[(i)]
$\rho(\cdot,\cdot)$ is a metric with dilation; namely,
there exists another distance $\bar \rho$ on $F$ such that
\begin{itemize}
\item [(i$'$)] for all $x,y\in F$,
$
C_1\bar \rho(x,y)\le \rho(x,y) \le C_2\bar\rho(x,y)$ holds for some constants $0<C_1<C_2<\infty$.

\item[(i$''$)] for each $x,y\in V_1$ and $n\in \N$, there exist
$x^{(n^{-1})}$ and $y^{(n^{-1})}\in V_n$ (we will write
$x^{(n^{-1})}:=n^{-1}x$, $y^{(n^{-1})}:=n^{-1}y$ for notational simplicity) such that $\bar \rho(n^{-1}x,n^{-1}y)=n^{-1} \bar \rho(x,y)$.

\end{itemize}
\item[(ii)] There exists $0\in { V_1}\subset F$ such that $n^{-1}0=0$ for all $n\in \N$.
\item[(iii)]
$V_n=n^{-1}{V_1}:=\{n^{-1}z: z\in V_1\}$, and
$F$ is a closure of $\cup_{n\ge 1}V_n$.
Moreover, $m_n=n^{-d}\mu_n$, $\mu_n(A)=\mu_1(nA)$ for all $A\subset V_n$
and $n\in \N$,
where $\mu_n$ denotes the counting measure on $V_n$.
\end{itemize}

\begin{remark}\label{r5-1}
Obviously conditions (i$'$) and (i$''$) in assumption (i) above hold true for a bounded Lipschitz domain $F \subset \R^d$.
For simplicity, in the arguments below we assume that
$\rho(n^{-1}x,n^{-1}y)$ $=n^{-1} \rho(x,y)$ for all $n\in \N$ and $x,y\in V_1$; otherwise,
we can express Dirichlet forms
$(D_{V_n}^{\w},\F_{n}^{\w})$ and $(D_{0},\F_{0})$ below with $\rho$,
$w_{x,y}^{(n)}(\w)$ and $c(x,y)$ replaced by
$\bar \rho$,
$\bar w_{x,y}^{(n)}(\w):=\frac{\bar\rho(x,y)^{d+\alpha}}{\rho(x,y)^{d+\alpha}}
w_{x,y}^{(n)}(\w)$
and
$\bar c(x,y):=\frac{\bar\rho(x,y)^{d+\alpha}}{\rho(x,y)^{d+\alpha}}c(x,y)$, respectively.
 Hence, by applying the arguments below for $\bar \rho$, $\bar w_{x,y}^{(n)}(\w)$ and $\bar c(x,y)$,
we can still
obtain the quenched invariance principle for $(X_t^{\w})_{t\ge 0}$.
\end{remark}

Let
$\{w_{x,y}(\w): x,y \in V_1\}$ be a sequence of random variables
defined on
a probability space $\big(\Omega,\F,\Pp\big)$ such that
$w_{x,y}(\w)=w_{y,x}(\w)$ and
$w_{x,y}(\w)\ge 0$
for all $x\neq y\in V_1$.
For any $x\in V_n$, $m_n(x):=m_n(\{x\})=n^{-d}$.
Define
\begin{equation}\label{kkk2}
w_{x,y}^{(n)}(\w)=w_{nx,ny}(\w).
\end{equation}
We consider the following class of Dirichlet forms
\begin{align*}
D_{V_n}^{\w}(f,f)&=\frac{1}{2
}\sum_{x,y\in V_n}(f(x)-f(y))^2\frac{w_{x,y}^{(n)}(\w)}{\rho(x,y)^{d+\alpha}}
m_n(x)m_n(y),\quad f\in \F_{n}^{\w},\\
\F_{n}^{\w}&=\{f\in L^2(V_n;m_n): D_{V_n}^{\w}(f,f)<\infty\}.
\end{align*}
Let $X^{V_1,\w}$ be the strong Markov process on $V_1$ associated with $(D_{V_1}^{\w},\F_{1}^{\w})$.
Then, it is easy to show that for a.s.\ $\w\in \Omega$, $(D_{V_n}^{\w},\F_{n}^{\w})$ generates a Markov process $X^{(n),\w}=(X_t^{(n),\w})_{t\ge0}$ such that $X_t^{(n),\w}={n}^{-1}X_{n^{\alpha}t}^{V_1,\w}$ for all $t\ge0$.
Here and what follows,
\lq\lq =\rq\rq\, means two processes enjoy the
same distribution.

Now, consider the Dirichlet
form $(D_0,\F_0)$ given by \eqref{e4-1}, i.e.,
\begin{equation*}
\begin{split}
D_0(f,f)&=\frac{1}{2}\int_{\{F\times F\setminus {\rm diag}\}}\big(f(x)-f(y)\big)^2\frac{c(x,y)}{\rho(x,y)^{d+\alpha}}\,m(dx)\,m(dy),\quad f \in \F_0,\\
\F_0&=\{f\in L^2(F;m): D_0(f,f)<\infty\},
\end{split}
\end{equation*}
where $\alpha\in(0,2)$,  ${\rm diag}:=\{(x,y)\in F\times F:x=y\}$, and $c:F\times
F\rightarrow (0,\infty)$ is a
symmetric
continuous function
such that $0<c_1\le c(x,y)\le c_2<\infty$ for all
$(x,y) \in F\times F\setminus{\rm diag}$ and some constants
$c_1,c_2$. We suppose that assumption {\bf(Dir.)} holds. Let $Y:=((Y_t)_{t\ge0}, (\Pp_{x}^Y)_{x\in F})$ be a
$\alpha$-stable-like process on $F$.

We next apply Theorem \ref{t3-1} to prove the quenched invariance principle for $(X^{\w}_t)_{t\ge 0}$ under some assumptions on $w_{x,y}$.
We first assume that the following holds.
 \paragraph{{\bf Assumption (Den.)}}  \begin{itemize}\it
 \item[(i)] $\Ee[w_{x,y}]=J_1(x,y)$ and
  $\Ee[w_{x,y}^{-1}\I_{\{w_{x,y}>0\}}]=J_2(x,y)$ for any $x,y\in V_1$, where $0<C_1<J_i(x,y)<C_2<\infty$ for all $i=1,2$ and $x,y\in V_1$.
 \item[(ii)] For every compact set
$S\subseteq F$,
\begin{equation}\label{p3-2-0}
\lim_{n \rightarrow \infty}\Big[\sup_{x,y\in S\cap V_n}\Big|J_1(nx,ny)-c(x,y)\Big|\Big]=0.
\end{equation}
\item [(iii)] There exists a countable subset $\Xi\subset{\rm Lip}_c(F)$ such that $\Xi$ is dense
in ${\rm Lip}_c(F)$ under the uniform topology, i.e., for every $f\in {\rm Lip}_c(F)$,
\begin{equation}\label{p3-2-2a}
\inf_{g\in \Xi}\sup_{x\in F}|g(x)-f(x)|=0.
\end{equation}
\end{itemize}
\begin{remark}
Obviously when $F=\R^d$ and $m$ is the Lebesgue measure,
it follows from \eqref{p3-2-0} that for any $x\neq y\in \R^d$ and $s\neq0$,
$c(x,y)=c(sx,sy),$
which implies that the limit process $(Y_t)_{t\ge 0}$ satisfies the scaling invariant property as follows
$$ \Pp^Y_{\varepsilon^{-1}x}\left(\left(\varepsilon Y_{t\varepsilon^{-\alpha}}\right)_{t\ge 0}\in A\right)=
\Pp^Y_x\left(\left(Y_{t}\right)_{t\ge 0}\in A\right)$$ for any $x\in \R^d,$ $\varepsilon>0$ and $ A\subset \mathscr{D}([0,\infty);\R^d)$.
\end{remark}

For $\varepsilon>0$, $x\in V_1$, $R,r>0$,
$c_0>1/2$,
$c_0^*\ge2$ and
a bounded function $h$
on $V_1\times V_1$, define
\begin{align*}
p_1(r,R,\varepsilon)&=\Pp\Big(\Big|\sum_{x,y \in V_1: \rho(0,x)\le R, \rho(x,y)\le r}(w_{x,y}-J_1(x,y))\Big|>\varepsilon r^{d}R^d\Big),\\
p_2(x,r,\varepsilon)&=\Pp\Big(\Big|\sum_{y \in V_1:\rho(x,y)\le r}\big(w_{x,y}-J_1(x,y)\big)\Big|>\varepsilon r^{d}\Big),\\
p_3(x,r,\varepsilon)&=\Pp\Big(\Big|
\sum_{y\in V_1: \rho(x,y)\le r}\frac{(w_{x,y}-J_1(x,y))}{\rho(x,y)^{d+\alpha-2}}\Big|>\varepsilon r^{2-\alpha}\Big),\\
p_3^*(x,r,\varepsilon)&=\Pp\Big(\Big|
\sum_{y\in V_1: \rho(x,y)\le r}\frac{(w_{x,y}-J_1(x,y))}{\rho(x,y)^{d+\alpha-1}}\Big|>\varepsilon r^{1-\alpha}\Big),\quad \alpha\in(0,1),\\
p_4(x,r,c_0^*,\varepsilon)&=\Pp\Big(\Big|\sum_{y\in V_1: \rho(x,y)\le
c_0^*r}\big(w_{x,y}^{-1}-J_2(x,y)\big)\Big|
>\varepsilon r^d\Big),\\
p_5^{(n)}(x,R,r,h,\varepsilon)&=\Pp\Big(\Big|\sum_{{y \in B_F(0,nR)\cap V_1:}
\atop{\rho(x,y)\ge nr}}h(x,y)\frac{(w_{x,y}-J_1(x,y))}{\rho(x,y)^{d+\alpha}}\Big|^2>\varepsilon (nr)^{-2\alpha}\Big),\\
p_6(x,z,r,c_0)&=\Pp\left(\frac{\mu_1\{y\in V_1: \rho(y,x)\le r, w_{y,z}>0\}}{\mu_1\{y\in V_1: \rho(y,x)\le r\}}\le c_0\right).
\end{align*}

\begin{theorem}\label{t5-1}
Suppose that assumption {\bf(Den.)}\! holds, and that there exists a constant $\theta \in (0,1)$
such that
\begin{itemize}
\item[(i)]
for any $\varepsilon_0$ and $\varepsilon$ small enough, any $N$ large enough, and any sequence of
bounded function $\{h_n\}_{n\ge 1}$  on $V_1\times V_1$ with $\sup_{n\ge 1}\|h_n\|_{\infty}<\infty$,
\begin{equation}\label{p3-2-1}
\sum_{R=1}^{\infty}\sum_{r=1}^{R}p_1(r,R,\varepsilon_0)<\infty,
\end{equation}
\begin{equation}\label{p3-2-1a}
\sum_{R=1}^{\infty}\sum_{x \in
B_F(0,6R)\cap V_1}\sum_{r=R^{\theta}/2}^{\infty}p_2(x,r,\varepsilon_0)<\infty,
\end{equation} and
\begin{equation}\label{p3-2-2}
\sum_{n=1}^{\infty}\sum_{x\in B_F(0,nN)\cap V_1}p_5^{(n)}(x,N,\varepsilon,h_n,\varepsilon_0)<\infty.
\end{equation}

\item[(ii)] any $\varepsilon_0$ small enough, \begin{equation}\label{l3-1-1}
\sum_{R=1}^{\infty}\sum_{x \in
B_F(0,6R)\cap V_1}p_3(x,{R^{\theta}},\varepsilon_0)<\infty
\end{equation} and
\begin{equation}\label{l3-1-2}
\sum_{R=1}^{\infty}\sum_{x \in
B_F(0,6R)\cap V_1}\sum_{r=R^{\theta}/2}^{2R}
p_4(x,r,c_0^*,\varepsilon_0)<\infty,
\end{equation} for any fixed $c_0^*\ge0$, as well as
\begin{equation}\label{l3-1-2-a}
\sum_{R=1}^{\infty}\sum_{x,z\in
B_F(0,6R)\cap V_1}\sum_{r=R^{\theta}/2}^{2R}
p_6(x,z,r,c_0)<\infty
\end{equation} for some fixed $c_0>1/2$.

When $\alpha\in (0,1)$, \eqref{l3-1-1} can be replaced by
\begin{equation}\label{l3-1-1-1}
\sum_{R=1}^{\infty}\sum_{x \in
B_F(0,6R)\cap V_1}p^*_3(x,R^{\theta},\varepsilon_0)<\infty.
\end{equation} \end{itemize}
 Then for
$\Pp$-a.s.\ $\w\in \Omega$ and
any $\{x_n\in V_n: n\ge 1\}$
such that $\lim_{n \rightarrow \infty}x_n=x$ with some $x \in F$,
it holds that for every $T>0$, $\Pp^{(n),\w}_{x_n}$ converges weakly to
$\Pp_{x}^Y$ on the space of all probability measures on
$\D([0,T];F)$, where $\Pp^{(n),\w}_{x_n}$ denotes the distribution of
process $X_t^{(n),\w}=n^{-1}X^{V_1,\w}_{n^{\alpha}t}$.
\end{theorem}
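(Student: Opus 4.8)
The plan is to show that for $\Pp$-a.s.\ $\w\in\Omega$ the conductances $\{w^{(n)}_{x,y}(\w):x,y\in V_n,\,n\ge1\}$ --- deterministic once $\w$ is fixed --- satisfy Assumptions {\bf (Mos.)} and {\bf (Wea.)}, the latter with a random threshold $R_0=R_0(\w)$ that is independent of $n$, and then to invoke Theorem \ref{t3-1}. Since Assumptions {\bf (MMS)} and {\bf (Dir.)} are standing assumptions of this section, this is enough: for such $\w$, Theorem \ref{t3-1} gives that $\Pp^{(n),\w}_{x_n}$ converges weakly to $\Pp_x^Y$ on $\D([0,T];F)$ for every $T>0$ and every $\{x_n\in V_n:n\ge1\}$ with $x_n\to x$.

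Three ingredients will drive the verification. First, by Assumption {\bf (Den.)}(i) the means $\Ee w_{x,y}=J_1(x,y)$ and $\Ee[w_{x,y}^{-1}\I_{\{w_{x,y}>0\}}]=J_2(x,y)$ lie between two fixed positive constants; together with the $d$-set bounds \eqref{a3-1-1} and \eqref{a3-3-3} and routine dyadic summations, this shows that the \emph{expectations} of all the weighted sums occurring in {\bf (Mos.)} and {\bf (Wea.)} already obey the inequalities demanded there, with purely deterministic constants. Second, since $V_n=n^{-1}V_1$, using \eqref{e:kkk1}, \eqref{kkk2} and Remark \ref{r5-1} I will rewrite every sum over $V_n$ against $w^{(n)}_{x,y}(\w)$ --- up to multiplicative factors bounded above and below uniformly in $n$ --- as the corresponding sum over $V_1$ against the scale-$1$ conductances $w_{\xi,\eta}(\w)$, with scale parameters $r,R$ replaced by $nr,nR$. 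Third, after a dyadic decomposition of each such sum into spherical shells, the event that a centred partial sum $\sum(w_{\xi,\eta}-J_1(\xi,\eta))$ (resp.\ $\sum(w_{\xi,\eta}^{-1}\I_{\{w_{\xi,\eta}>0\}}-J_2(\xi,\eta))$) exceeds $\varepsilon_0$ times the relevant power of its radius is contained in a union of the events measured by $p_1,\dots,p_6$. The summability hypotheses \eqref{p3-2-1}--\eqref{l3-1-2-a} (with \eqref{l3-1-1} replaced by \eqref{l3-1-1-1} when $\alpha\in(0,1)$), together with the first Borel--Cantelli lemma, will then produce, for $\Pp$-a.s.\ $\w$, a threshold $R_0(\w)$ such that for every scale parameter exceeding $R_0(\w)$ all these centred sums are at most $\varepsilon_0\times(\text{scale})$, uniformly over the relevant base points and over all admissible $r$.

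To organize the shell decomposition I will use $p_2$ (and $p_1$ for the double sums in {\bf (Mos.)}(i)) on the shells of radius $\ge R^\theta/2$, and the single-scale estimates $p_3$, $p_3^*$, $p_4$ to absorb the innermost shells; this is how one passes from the single-scale hypotheses to the full ranges $R^\theta/2\le r\le 2R$ and $\rho(x,y)\le\varepsilon$ occurring in {\bf (Exi.)}--{\bf (Wea.)}. For the two conditions \eqref{p3-1-2}--\eqref{p3-1-3} that feature $w^{(n)}_{x,y}-c(x,y)$ I will first subtract the bias $\Ee w^{(n)}_{x,y}-c(x,y)$, which tends to $0$ uniformly on compacts by Assumption {\bf (Den.)}(ii)/\eqref{p3-2-0}, and then handle the remaining centred part as above using $p_5$. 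For \eqref{a4-3-1a} I will use $p_6$, translating the counting measure $\mu_1$ into $m_n$ through \eqref{e:kkk1} --- which is why the threshold in $p_6$ is written as $C_4c_0C_3^{-1}$. Intersecting the probability-one events obtained along a sequence $\varepsilon_0\downarrow 0$ then upgrades the ``$\le\varepsilon_0\times(\text{scale})$'' estimates to the genuine ``$\to0$'' statements required in {\bf (Mos.)}. This yields {\bf (Mos.)} and {\bf (Wea.)} $\Pp$-a.s., and Theorem \ref{t3-1} finishes the proof.

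The main obstacle I anticipate is the bookkeeping of scales in the Borel--Cantelli step. The hypotheses control $p_3$ (resp.\ $p_3^*$) only at the single scale $R^\theta$, while {\bf (Exi.)} and {\bf (Wea.)} require bounds uniform over $R^\theta/2\le r\le 2R$ --- and {\bf (Mos.)}(i) requires control of a sum at a \emph{fixed} small $\varepsilon$ as $n\to\infty$, i.e.\ over the growing radius $n\varepsilon$. Bridging this gap forces the dyadic shell decomposition together with a careful shell-by-shell choice of which $p_i$ and which scale to invoke, and then a check that the resulting countable union of ``bad'' events --- over all shells, all scale parameters $R$ (or all $n$), and all base points in $B_F(0,cR)\cap V_1$ --- remains summable, which is exactly what the stated hypotheses were designed to guarantee. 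One must also be careful that $R_0(\w)$ comes out independent of $n$, so that the $n$-uniform constants of Theorems \ref{exit} and \ref{T:holder} --- hence the tightness and finite-dimensional convergence used in the proof of Theorem \ref{t3-1} --- are preserved; and one must keep straight which metric ($\rho$, $\rho_n$, or $\bar\rho$) and which measure ($m_n$, $\mu_n$, or $\tilde m_n$) each of the conditions in {\bf (Wea.)} refers to.
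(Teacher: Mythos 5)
Your proposal is correct and takes essentially the same route as the paper: the paper proves Theorem \ref{t5-1} by showing (Lemma \ref{p5-2}) that hypothesis (i) implies Assumption {\bf (Mos.)} holds $\Pp$-a.s., and (Lemma \ref{l5-1-0}) that \eqref{p3-2-1a} together with hypothesis (ii) implies Assumption {\bf (Wea.)} holds $\Pp$-a.s., both via Borel--Cantelli and the same dyadic shell decomposition you describe, and then invokes Theorem \ref{t3-1}. Your allocation of the $p_i$ (using $p_3$ or $p_3^*$ at the single scale $R^\theta$ to absorb the innermost shells, $p_2$ via the resulting \eqref{p3-2-3} for the outer shells, $p_1$ for the double sum in \eqref{p3-1-1}, $p_5$ plus Assumption {\bf (Den.)}(ii) for the centred parts in \eqref{p3-1-2}--\eqref{p3-1-3}, $p_6$ for \eqref{a4-3-1a}, $p_4$ for \eqref{a4-3-2}) and your anticipated scale bookkeeping are exactly what the paper carries out, including the normalization $K_n\equiv 1$ that it adopts for simplicity with the observation that the general case follows from \eqref{e:kkk1}--\eqref{kkk2}.
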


Theorem \ref{t5-1} immediately holds by applying Theorem \ref{t3-1}, Lemmas \ref{p5-2} and \ref{l5-1-0}  below
to process $X_t^{(n),\w}$.

\begin{lemma}\label{p5-2} Under assumption ${\rm(i)}$ in Theorem $\ref{t5-1}$, for $\Pp$-a.s.\ $\w\in \Omega$,
Assumption {\bf(Mos.)} holds for the conductance $\{w_{x,y}^{(n)}(\w)\}$.
\end{lemma}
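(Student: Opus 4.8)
The plan is to use the first Borel--Cantelli lemma, applied to the tail conditions \eqref{p3-2-1}, \eqref{p3-2-1a} and \eqref{p3-2-2}, to produce an event of full $\Pp$-measure on which all three parts of Assumption {\bf (Mos.)} follow from purely deterministic estimates. Throughout I would use the scaling dictionary $\rho(nu,nv)=n\rho(u,v)$, $m_n\equiv n^{-d}$ (recall the standing assumption $K_n\equiv1$, hence $K_1\equiv1$), $w^{(n)}_{x,y}(\w)=w_{nx,ny}(\w)$ and $\Ee w_{u,v}=J_1(u,v)$ with $C_1\le J_1\le C_2$: the substitution $u=nx$, $v=ny$ turns a sum over $B_F(0,R)\cap V_n$ into a sum over $B_F(0,nR)\cap V_1$, and after collecting the powers of $n$ coming from $\rho(x,y)^{-\gamma}=n^{\gamma}\rho(u,v)^{-\gamma}$ and from $m_n=n^{-d}$, each of \eqref{p3-1-1}, \eqref{p3-1-1a}, \eqref{p3-1-2}, \eqref{p3-1-3} becomes a statement about a (weighted) linear functional of $\{w_{u,v}(\w)\}$ on the ball of radius $nR$ in $V_1$. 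I would also record the cardinality bound $\#(B_F(u,\ell)\cap V_1)\asymp\ell^{d}$ for $u\in V_1$, which follows from the $d$-set property of $m_n$ together with $m_n=n^{-d}\mu_n$ and $\mu_n(A)=\mu_1(nA)$.

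First I would fix a countable set $E\subset(0,\infty)$ with $\inf E=0$ and a countable family $\mathcal L$ of test functions dense in the relevant function spaces, and, applying Borel--Cantelli to \eqref{p3-2-1}, \eqref{p3-2-1a} and \eqref{p3-2-2} for each $\varepsilon_0\in E$ and each $h\in\mathcal L$, pass to a $\Pp$-full event on which: (a) $|\sum_{\rho(0,u)\le\mathcal R,\,\rho(u,v)\le\ell}(w_{u,v}-J_1(u,v))|\le\varepsilon_0\,\ell^{d}\mathcal R^{d}$ for all large $\mathcal R$ and $1\le\ell\le\mathcal R$; (b) $|\sum_{v:\rho(u,v)\le\ell}(w_{u,v}-J_1(u,v))|\le\varepsilon_0\,\ell^{d}$ for all $u$ in the relevant balls and $\ell\ge\mathcal R^{\theta}/2$; (c) with $h_n(u,v):=(h(u/n)-h(v/n))^{j}$, $j\in\{1,2\}$ (uniformly bounded in $n$), $|\sum_{v\in B_F(0,nN)\cap V_1,\,\rho(u,v)\ge n\varepsilon}h_n(u,v)(w_{u,v}-J_1(u,v))\rho(u,v)^{-(d+\alpha)}|\le\sqrt{\varepsilon_0}\,(n\varepsilon)^{-\alpha}$ for all large $n$ and all $u\in B_F(0,nN)\cap V_1$. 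Fix one such $\w$.

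For \eqref{p3-1-1} and \eqref{p3-1-1a} I would use the scaling and an Abel summation in the variable $\rho(u,v)$ against $S(\ell):=\sum_{u\in B_F(0,nR)\cap V_1}\sum_{v\in B_F(0,nR)\cap V_1:\rho(u,v)\le\ell}w_{u,v}$; feeding (a)--(b) in shell by shell together with the cardinality bound gives $S(\ell)\lesssim(C_2+\varepsilon_0)(nR)^{d}\ell^{d}$, whence $n^{-2d}\sum_{B_F(0,R)\cap V_n,\,\rho(x,y)\le\varepsilon}w^{(n)}_{x,y}\rho(x,y)^{-(d+\alpha-2)}\lesssim R^{d}\varepsilon^{2-\alpha}\to0$ as $\varepsilon\to0$, and likewise $n^{-2d}\sum_{B_F(0,R)\cap V_n,\,\rho(x,y)\ge l}w^{(n)}_{x,y}\rho(x,y)^{-(d+\alpha)}\lesssim R^{d}l^{-\alpha}\to0$ as $l\to\infty$. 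For \eqref{p3-1-2} and \eqref{p3-1-3} I would split $w^{(n)}_{x,y}-c(x,y)=(w^{(n)}_{x,y}-\Ee w^{(n)}_{x,y})+(\Ee w^{(n)}_{x,y}-c(x,y))$; on $\{\rho(x,y)>\varepsilon\}$ every factor $\rho(x,y)^{-(d+\alpha)}$ is $\le\varepsilon^{-(d+\alpha)}$ and $\sum_{B_F(0,R)\cap V_n}m_n(x)m_n(y)\le CR^{2d}$, so the deterministic part is controlled by $\sup_{x,y\in\overline{B_F(0,R)}}|J_1(nx,ny)-c(x,y)|\to0$, which is Assumption {\bf (Den.)}(ii) with $S=\overline{B_F(0,R)}$ (so $[x]_n=x$ on $V_n$). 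The random part reduces after scaling to $n^{\alpha}\sum_{v\in B_F(0,nR)\cap V_1,\,\rho(u,v)\ge n\varepsilon}(f(u/n)-f(v/n))^{j}(w_{u,v}-J_1(u,v))\rho(u,v)^{-(d+\alpha)}$, which, after replacing $f$ by a nearby $h\in\mathcal L$ and invoking (c), is $\le\sqrt{\varepsilon_0}\,\varepsilon^{-\alpha}$ for large $n$; squaring (for \eqref{p3-1-2}), summing over $x$ with weight $n^{-d}$, and sending $\varepsilon_0\downarrow0$ along $E$ gives the limit $0$.

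The main obstacle is bookkeeping, in two places. First, the deviation inputs $p_1,p_2,p_5$ control \emph{cumulative} sums, while \eqref{p3-1-1}--\eqref{p3-1-1a} involve the \emph{weighted} sums $\sum w_{u,v}\rho(u,v)^{-\gamma}$; the bridge is the shell-by-shell Abel summation above, and one must check that every radius it produces --- in particular the oversized radius $\asymp2nR$ arising when both endpoints lie in $B_F(0,nR)$ --- remains covered: after cutting $S(\ell)$ at radius $\mathcal R$, the leftover piece with $\mathcal R<\rho(u,v)\le2\mathcal R$ is bounded, via symmetry of $w$, by a sum of $p_2$-type quantities handled by \eqref{p3-2-1a}, which is exactly why $p_2$ appears alongside $p_1$. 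Secondly, \eqref{p3-1-2}--\eqref{p3-1-3} are asked for \emph{every} $f\in{\rm Lip}_c(F)$ (resp.\ $f\in C_b(B_F(0,R))$), an uncountable family, whereas Borel--Cantelli only treats countably many events; this is handled by fixing a countable dense $\mathcal L$ and passing to the limit via the equicontinuity visible in the estimates --- the final bound depends on $f$ only through $\|f\|_{\infty}$ and its modulus of continuity on $\overline{B_F(0,R)}$ --- using that only this countable family enters the generalized Mosco convergence of \cite{CKK} that Proposition \ref{p3-1} invokes.
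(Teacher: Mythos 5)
Your proof is correct and follows essentially the same route as the paper's: apply Borel--Cantelli to \eqref{p3-2-1}, \eqref{p3-2-1a} and \eqref{p3-2-2} to get quenched deviation bounds, then use the scaling dictionary and a dyadic shell decomposition (Abel summation) to convert the cumulative bounds on $\sum w_{u,v}$ over distance-$\ell$ balls into the weighted bounds \eqref{p3-1-1} and \eqref{p3-1-1a}, and split $w^{(n)}_{x,y}-c(x,y)$ into a random fluctuation (handled by the $p_5$ input) plus a deterministic bias $J_1(nx,ny)-c(x,y)$ (handled by \eqref{p3-2-0}) to verify \eqref{p3-1-2} and \eqref{p3-1-3}. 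One place where you go slightly beyond the paper's written argument is the measurability bookkeeping for the uncountable family of test functions in \eqref{p3-1-2}--\eqref{p3-1-3}: the paper applies Borel--Cantelli to $h_n(x,y)=f(n^{-1}y)-f(n^{-1}x)$ for a fixed $f$, producing a null set that a priori depends on $f$, and leaves the passage to a single full-measure event implicit, whereas you explicitly propose passing through a countable dense family $\mathcal L$ and controlling the approximation error via the a.s.\ bounds on $\sum_v w_{u,v}\rho(u,v)^{-(d+\alpha)}$ and the modulus of continuity of $f$; this is a worthwhile clarification and is consistent with the fact that the Mosco-convergence machinery in \cite{CKK} only ever needs a dense class of test functions.
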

\begin{proof}
Under \eqref{p3-2-1}, for any $\varepsilon_0>0$,
\begin{align*}
&\sum_{R=1}^{\infty} \Pp\Big(\bigcup_{r=1}^{R}
\Big\{\Big|\sum_{x,y\in V_1: \rho(0,x)\le R,\rho(x,y)\le r}\big(w_{x,y}-J_1(x,y)\big)\Big|>\varepsilon_0 r^{d}R^d\Big\}\Big)\\
&\le \sum_{R=1}^{\infty}\sum_{r=1}^{R}\Pp\Big(
\Big|\sum_{x,y\in V_1: \rho(0,x)\le R,\rho(x,y)\le r}\big(w_{x,y}-J_1(x,y)\big)\Big|>\varepsilon_0 r^{d}R^d\Big)=\sum_{R=1}^{\infty}\sum_{r=1}^{R}p_1(r,R,\varepsilon_0)<\infty.
\end{align*}
Since $C_1\le J_1(x,y)\le C_2$ for all $x,y \in V_1$ and some
positive constants $C_1$ and $C_2$, by the Borel-Cantelli lemma, we
know that, for $\Pp$-a.s.\ $\w \in \Omega$, there exists a constant
$R_0(\w)\ge1$ such that for every $R>R_0(\w)$,
$$
c_1r^{d}R^d\le \sum_{x,y\in V_1:\rho(0,x)\le R, \rho(x,y)\le r}w_{x,y}(\w)\le
c_2r^{d}R^d,\quad \forall\
 1\le r\le R,
$$
where $c_1,c_2$ are positive constants independent of $\w$. Then,
for any $0<2\eta<N$ and $nN>R_0(\w)$, we have
\begin{align*}
&n^{-2d}\sum_{x,y\in B_F(0,N)\cap V_n: 0<\rho(x,y)\le \eta}\frac{w_{nx,ny}(\w)}{\rho(x,y)^{d+\alpha-2}}\\
&\le n^{-d+\alpha-2}\sum_{k=0}^{[\log(n\eta)/\log 2]+1}
\sum_{x,y\in V_1: \rho(0,x)\le nN \text{ and }2^k\le \rho(x,y)<2^{k+1}}\frac{w_{x,y}(\w)}{\rho(x,y)^{d+\alpha-2}}\\
&\le n^{-d+\alpha-2}\sum_{k=0}^{[\log(n\eta)/\log 2]+1}2^{-k(d+\alpha-2)}\sum_{x,y\in V_1: \rho(0,x)\le nN \text{ and }2^k\le \rho(x,y)<2^{k+1}}
w_{x,y}(\w)\\
&\le c_3n^{-d+\alpha-2}\sum_{k=0}^{[\log(n\eta)/\log 2]+1}2^{-k(d+\alpha-2)}2^{(k+1)d}(nN)^d\le
c_4N^d\eta^{2-\alpha}.
\end{align*}
This yields that \eqref{p3-1-1} holds for $\Pp$-a.s.\ $\w\in \Omega$.

According to \eqref{p3-2-1a}, for every $\varepsilon_0>0$
small enough,
\begin{align*}
&\sum_{R=1}^{\infty}\Pp\Big(\bigcup_{x\in B_F(0,6R)\cap V_1}\bigcup_{r=R^{\theta}/2}^{\infty}\Big\{
\Big|\sum_{y\in V_1: \rho(x,y)\le r}\big(w_{x,y}-J_1(x,y)\big)\Big|>\varepsilon_0 r^d\Big\}\Big)\\
&\le \sum_{R=1}^{\infty}\sum_{x \in B_F(0,6R)\cap V_1}\sum_{r=R^{\theta}/2}^{\infty}
\Pp\Big(\Big\{
\Big|\sum_{y\in V_1: \rho(x,y)\le r}\big(w_{x,y}-J_1(x,y)\big)\Big|>\varepsilon_0 r^d\Big\}\Big)\\
&\le  \sum_{R=1}^{\infty}\sum_{x \in B_F(0,6R)}\sum_{r=R^{\theta}/2}^{\infty}p_2(x,r,\varepsilon_0)<\infty.
\end{align*}
Hence, by the Borel-Cantelli lemma, we can find
a constant $R_{1}(\w)>0$ such that for every $R>R_{1}(\w)$,
$x\in B_F(0,6R)\cap V_1$ and $r\ge R^{\theta}/2$,
$
\Big|\sum_{y\in V_1:\rho(x,y)\le r}(w_{x,y}-J_1(x,y))\Big| \le
\varepsilon_0 r^{d}.
$
Due to the fact that $0<C_1\le J_1(x,y)\le
C_2<\infty$ for any $x,y\in V_1$ again, we arrive at the statement that for all $R>R_1(\w)$,
\begin{equation}\label{p3-2-3}
c_5r^{d}\le \sum_{y\in V_1:\rho(x,y)\le r} w_{x,y} \le c_6 r^{d},\quad
\forall\
 x\in B_F(0,6R),\ r\ge R^{\theta}/2.
\end{equation}
Therefore, by \eqref{p3-2-3}, for every $n,j\ge1$ large enough such that $2nN>R_1(\w)$ and $j>N$,
\begin{align*}
&n^{-2d}\sum_{x,y\in B_F(0,N)\cap V_n: \rho(x,y)\ge j}\frac{w_{nx,ny}(\w)}{\rho(x,y)^{d+\alpha}}\\
&\le n^{-d+\alpha}\sum_{x \in V_1:\rho(0,x)\le nN}
\sum_{y \in V_1: \rho(x,y)\ge nj}\frac{w_{x,y}(\w)}{\rho(x,y)^{d+\alpha}}\\
&\le n^{-d+\alpha}\sum_{x \in V_1: \rho(0,x)\le nN}\sum_{k=\big[\frac{\log (nj)}{\log 2}\big]}^{\infty}
2^{-k(d+\alpha)}\sum_{y\in V_1: \rho(x,y)\le 2^{k+1}}w_{x,y}(\w)\\
&\le c_7n^{-d+\alpha}\sum_{x \in V_1: \rho(0,x)\le nN}
\sum_{k=\big[\frac{\log (nj)}{\log 2}\big]}^{\infty}2^{-k(d+\alpha)}2^{(k+1)d}
\le c_8N^dj^{-\alpha}.
\end{align*}
Hence, letting $n \rightarrow \infty$ first and then $j \rightarrow
\infty$, we prove that \eqref{p3-1-1a} holds for $\Pp$-a.s. $\w\in
\Omega$.

Given $f \in {\rm Lip}_c(F)$, let
\begin{equation*}
h_n(x,y):=
\begin{cases}
 f(n^{-1}{y})-f(n^{-1}{x}),&n^{-1}{x},n^{-1}{y}\in V_n,\\
0, &\text{otherwise}.
\end{cases}
\end{equation*}
Let $\mathbb{Q}_+$ be the set of all positive rational numbers. Applying \eqref{p3-2-2} to
$h_n(x,y)$ and using the Borel-Cantelli lemma, we
can find a null set $\Phi(f)$ so that
for every $\omega \notin \Phi(f)$,
$\varepsilon,\varepsilon_0\in \mathbb{Q}_+$ small enough and $R\in \mathbb{Q}_+$ large enough, there
exists a constant $n_{0}(\w)>0$ (which may depend on
$\varepsilon_0$, $\varepsilon$, $N$ and $f$) such that for every $n>n_{0}(\w)$ and $x\in {B_F(0,nR)}\cap V_1$,
 $$
\Big|\sum_{y\in B_F(0,nR)\cap V_1: \rho(x,y)\ge n\varepsilon}\big(
f(n^{-1}{y}\big)-f(n^{-1}{x})\big)\frac{(w_{x,y}(\w)-J_1(x,y))}{\rho(x,y)^{d+\alpha}}\Big|^2 \le
\varepsilon_0(n\varepsilon)^{-2\alpha}.
$$
Then, for $n$ large enough such that $n\varepsilon>(nR)^{\theta}$, we have
\begin{align*}
&n^{-d}
\sum_{x\in B_F(0,R)\cap V_n}\left(\sum_{{y \in B_F(0,R)\cap V_n:}\atop {\rho(x,y)>\varepsilon}}
\big(f(x)-f(y)\big)\frac{\big(w_{nx,ny}(\w)-J_1(nx,ny)\big)}{\rho(x,y)^{d+\alpha}}m_n(y)\right)^2\\
&=n^{-d+2\alpha}
\sum_{x\in B_F(0,nR)\cap V_1}\left(\sum_{y \in B_F(0,nR)\cap V_1 : \rho(x,y)>n\varepsilon}
h_n(x,y)\frac{\big(w_{x,y}(\w)-J_1(x,y)\big)}{\rho(x,y)^{d+\alpha}}\right)^2\\
&\le n^{-d+2\alpha}
\sum_{x \in B_F(0,nR)\cap V_1}\varepsilon_0(n\varepsilon)^{-2\alpha}\le
c_9R^d\varepsilon^{-2\alpha} \varepsilon_0.
\end{align*}
On the other hand, due to \eqref{p3-2-0}, we can verify that every
fixed $R>0$ and $\varepsilon>0$,
\begin{align*}
&\lim_{n \rightarrow \infty}
n^{-d}\sum_{x\in B_F(0,R)\cap V_n}\bigg(\sum_{{y \in B_F(0,R)\cap V_n:}\atop {\rho(x,y)>\varepsilon}}
\big(f(x)-f(y)\big)\frac{\big(J_1(nx,ny)-c(x,y)\big)}{\rho(x,y)^{d+\alpha}}m_n(y)\bigg)^2\\
&\le 4\|f\|_{\infty}^2\varepsilon^{-2(d+\alpha)}\lim_{n \rightarrow \infty}
n^{-3d}
\sum_{x\in B_F(0,R)\cap V_n}\Big(\sum_{y \in B_F(0,R)\cap V_n: \rho(x,y)>\varepsilon}\big|J_1(nx,ny)-c(x,y)\big|\Big)^2\\
&\le c_{10}\|f\|_{\infty}^2\varepsilon^{-2(d+\alpha)}R^{d}\lim_{n \rightarrow \infty}\bigg\{
n^{-2d}\sum_{x,y\in B_F(0,R)\cap V_n}\big(J_1(nx,ny)-c(x,y)\big)^2\bigg\}=0.
\end{align*}
Combining two estimates above, we can obtain that
for every $f\in {\rm Lip}_c(F)$, \eqref{p3-1-2}
holds for all $\w\notin \Phi(f)$ and $\varepsilon,R\in \mathbb{Q}_+$ with $\varepsilon$ small enough and $R$ large enough, by first letting $n \rightarrow\infty$ and then taking $\varepsilon_0\rightarrow 0$.

For the countable subset $\Xi\subset {\rm Lip}_c(F)$ in Assumption {\bf (Den.)}(iii), we define
$\Phi_1:=\cup_{f\in \Xi}\Phi(f)$. Obviously $\Phi_1$ is a null set. It is also easy to see that
for every $\w\notin \Phi_1$, $f\in \Xi$ and $\varepsilon,R\in \mathbb{Q}_+$ with $\varepsilon$ small enough and $R$ large enough,
\begin{equation}\label{p3-2-4}
\lim_{n \rightarrow \infty}T_n(f,\varepsilon, R, \w)=0,
\end{equation}
where
\begin{align*}
T_n(f,\varepsilon, R, \w):=n^{-d}\!\!\sum_{x\in B_F(0,R)\cap V_n}\!\!\!\!\bigg(\sum_{y\in B_F(0,R)\cap V_n: \rho(x,y)> \varepsilon}\!\!\big(
f(y)-f(x)\big)\frac{w_{nx,ny}(\w)-c(x,y)}{\rho(x,y)^{d+\alpha}}m_n(y)\!\!\bigg)^2.
\end{align*}

Furthermore, note that for every $f,g\in {\rm Lip}_c(F)$ and $\varepsilon, R>0$,
\begin{align*}
&T_n(f-g,\varepsilon,R,\w)\\
&\le 4n^{-d}\sup_{x\in F}|f(x)-g(x)|^2\cdot \Big(\sum_{x\in B_F(0,R)\cap V_n}
\Big(\sum_{y\in B_F(0,R)\cap V_n: \rho(x,y)\ge \varepsilon}\frac{w_{nx,ny}(\w)+c(x,y)}{\rho(x,y)^{d+\alpha}}m_n(y)\Big)^2\Big)\\
&\le c_{11}n^{-3d}\varepsilon^{-2(d+\alpha)}\sup_{x\in F}|f(x)-g(x)|^2\cdot
\Bigg[\sum_{x\in B_F(0,R)\cap V_n}\Big(\sum_{y\in B_F(0,R)\cap V_n}w_{nx,ny}+c(x,y)\Big)^2\Bigg].
\end{align*}
Then, by \eqref{p3-2-3}, we can find a null set $\Phi_2\in \Omega$ such that
for each $\w\notin \Phi_2$,
there exists
$n_0(\w)\ge1$ so that for all $n>n_0(\w)$, $f,g\in {\rm Lip}_c(F)$ and $\varepsilon, R>0$,
$$
T_n(f-g,\varepsilon,R,\w)\le c_{12}\varepsilon^{-2(d+\alpha)}R^{3d}\sup_{x\in F}|f(x)-g(x)|^2.
$$
Combining the estimate above with \eqref{p3-2-2a} yields that for each $f\in {\rm Lip}_c(F)$, $\varepsilon,R>0$ and $\w \notin \Phi_2$,
\begin{align}\label{p3-2-5a}
\inf_{g\in \Xi}\limsup_{n \rightarrow \infty}T_n(f-g,\varepsilon,R,\w)=0.
\end{align}

Let $\Phi:=\Phi_1\cup\Phi_2$. Then, $\Phi$ is a null set. According to \eqref{p3-2-4}, we know that
for every
$f\in {\rm Lip}_c(F)$, $g\in \Xi$, $\w\notin \Phi$ and $\varepsilon, R\in \mathbb{Q}_+$ with $\varepsilon$ small enough and $R$ large enough,
\begin{align*}
\limsup_{n \rightarrow \infty}T_n(f,\varepsilon,R,\w)&\le 2\limsup_{n \rightarrow \infty}T_n(g,\varepsilon,R,\w)
+2\limsup_{n \rightarrow \infty}T_n(f-g,\varepsilon,R,\w)\\
&=2\limsup_{n \rightarrow \infty}T_n(f-g,\varepsilon,R,\w).
\end{align*}
Therefore, taking infimum over all $g\in \Xi$ in the inequality above and applying \eqref{p3-2-5a}, we can prove that \eqref{p3-1-2} holds for every $f\in {\rm Lip}_c(F)$,
$\w\notin \Phi$ and $\varepsilon, R\in \mathbb{Q}_+$ with $\varepsilon$ small enough and $R$ large enough.  By the same (and even simpler) approximation arguments as above,
we can further verify that \eqref{p3-1-2} holds for every $f\in {\rm Lip}_c(F)$,
$\w\notin \Phi$, $\varepsilon>0$ small enough and $R>0$ large enough.

Since \eqref{p3-1-3} can be proved in the similar way, we omit it here.
\end{proof}
We note that the last part of the proof above that handles the null set carefully is motivated by the proof of \cite[Theorem 1.1]{CCKW}.

\begin{lemma}\label{l5-1-0} Suppose that there exists a constant $\theta\in (0,1)$ such that condition \eqref{p3-2-1a} and assumption ${\rm (ii)}$ in Theorem $\ref{t5-1}$ hold.
Then for $\Pp$-a.s. $\w\in \Omega$, Assumption {\bf(Wea.($\theta$))} holds for the conductance $\{w_{x,y}^{(n)}(\w)\}$.
\end{lemma}
\begin{proof}
First, according to \eqref{l3-1-2-a}, the property $\mu_n(A)=\mu_1(nA)$ and the definitions of $m_n$ and $w^{(n)}_{x,y}$, we can easily deduce from the Borel-Cantelli lemma
that there is a constant $R_0(\w)>0$ such that for
any $R>R_0(\w)$ and $R^\theta/2\le r \le R$, \eqref{a4-3-1a} holds.

By \eqref{l3-1-1},
\begin{align*}
&\sum_{R=1}^{\infty}\Pp\Bigg(\bigcup_{x \in B_F(0,6R)\cap V_1}\Big\{
\Big|\sum_{y\in V_1: \rho(x,y)\le
R^{\theta}}\frac{\big(w_{x,y}-J_1(x,y)\big)}{\rho(x,y)^{d+\alpha-2}}\Big|>\varepsilon_0
R^{\theta(2-\alpha)}\Big\}\Bigg)\\
&\le \sum_{R=1}^{\infty}\sum_{x \in B_F(0,6R)\cap V_1}
\Pp\Big(\Big|
\sum_{y\in V_1: \rho(x,y)\le R^{\theta}}
\frac{\big(w_{x,y}-J_1(x,y)\big)}{\rho(x,y)^{d+\alpha-2}}\Big|>\varepsilon_0 R^{\theta(2-\alpha)}\Big)\\
&=\sum_{R=1}^{\infty}\sum_{x \in
B_F(0,6R)\cap V_1} p_3(x,R^{\theta},\varepsilon_0)<\infty.
\end{align*}
Hence,  by the Borel-Cantelli lemma, there exists a constant $R_0(\w)>0$ such that for
any $R>R_0(\w)$,
\begin{equation}\label{l3-1-4}
\sum_{y\in V_1:\rho(x,y)\le R^{\theta}}\frac{w_{x,y}}{\rho(x,y)^{d+\alpha-2}}\le
c_1R^{\theta(2-\alpha)},\quad \forall\ x\in B_F(0,6R)\cap V_1.
\end{equation}
Furthermore, using \eqref{p3-2-3} and choosing $\varepsilon_0$ small
enough and $R_0(\w)$ large enough, we find that for every $R>R_0(\w)$,
\begin{equation}\label{l3-1-4a}
c_2^{-1}r^{d}\le \sum_{y \in V_1: \rho(x,y)\le r}w_{x,y}\le
c_2r^{d},\quad \forall\ r>R^{\theta}/2,\ x\in B_F(0,6R)\cap V_1.
\end{equation}
Combining this with \eqref{l3-1-4}, we see that for every
$R>R_0(w)$, $x\in B_F(0,6R/n)\cap V_n$,
and $R^{\theta}/2\le r \le 2R$,
\begin{align*}
&n^{-(d+\alpha-2)}\sum_{y\in V_n:\rho(x,y)\le r/n}
\frac{w^{(n)}_{x,y}}{\rho(x,y)^{d+\alpha-2}}\\
&\le \sum_{y\in V_1: \rho(x,y)< R^{\theta}/2}\frac{w_{x,y}}{\rho(x,y)^{d+\alpha-2}}+
\sum_{k=[\log (R^{\theta}/2)/\log 2]}^{[\log r
/\log 2]+1} 2^{-k(d+\alpha-2)}\Big(\sum_{y\in V_1: 2^k<\rho(x,y)\le 2^{k+1}}
w_{x,y}\Big)\\
&\le c_4 \Big(R^{\theta(2-\alpha)}+
\sum_{k=[\log (R^{\theta}/2)/\log 2]}^{[\log r
/\log 2]+1}2^{-k(\alpha-2)}\Big)\le c_5 r^{2-\alpha}.
\end{align*}
Therefore, \eqref{a4-3-1} holds for $\Pp$-a.s.\ $\w\in \Omega$.

Due to \eqref{l3-1-4a} again, we know that for every $R>R_0(\w)$, $x\in B_F(0,6R/n)\cap V_n$
and $r>R^{\theta}/2$,
\begin{align*}
n^{-(d+\alpha)}\sum_{y\in V_n: \rho(x,y)>r/n}
\frac{w^{(n)}_{x,y}}{\rho_n(x,y)^{d+\alpha}}&\le
\sum_{k=[\log r
/\log 2]}^{\infty}2^{-k(d+\alpha)}
\Big(\sum_{y \in V_1: 2^k<\rho(x,y)\le 2^{k+1}}w_{x,y}\Big)\\
&\le c_6\sum_{k=[\log r
/\log 2]}^{\infty}2^{-k(d+\alpha)}2^{d(k+1)}\le c_7 r^{-\alpha},
\end{align*}
which implies that \eqref{a4-3-3} is satisfied for $\Pp$-a.s.\ $\w\in \Omega$.

Following the arguments above, and using \eqref{l3-1-2} and the
Borel-Cantelli lemma, we can obtain that \eqref{a4-3-2} holds for
$\Pp$-a.s.\ $\w\in \Omega$. On the other hand, when $\alpha\in (0,1)$, we can use \eqref{l3-1-1-1} to prove that \eqref{e5-1} holds for
$\Pp$-a.s.\ $\w\in \Omega$.
The proof is complete.
\end{proof}

\subsection{Examples}
As an application of Theorem \ref{t5-1}, we consider  four examples.
One is a lattice on a half/quarter space, and other three are time-change of stable-like processes,
a bounded Lipschitz domain and a fractal graph respectively.
We also show the the quenched invariance principle for a class of constant speed $\alpha$-stable-like random walks on $\Z^d$, by using Theorem \ref{th1} and the time change argument.
\subsubsection{Lattice on a half/quarter space}\label{laths}
Let $F:=\mathbb{R}^{d_1}_+\times\mathbb{R}^{d_2}$ with $d_1,d_2\in \mathbb{N}\cup\{0\}$,
 and $\rho$ and $m$ be the Euclidean distance and the Lebesgue measure respectively, which clearly satisfy assumption {\bf(MMS)}.
Therefore the process $Y$ associated with Dirichlet form  $(D_0,\mathscr{F}_0)$ is a reflected stable-like process on $F$, see e.g. \cite{CK}.
Obviously $(D_0,\mathscr{F}_0)$ satisfies assumption {\bf(Dir.)}.
Here we will take
$
V_1=\mathbb{L}:=\mathbb{Z}^{d_1}_+\times\mathbb{Z}^{d_2},
$ and $K_n\equiv 1$ for all $n\in \N$.
Note that the scaling limit of $n^{-1}\mathbb{L}$ is $F$.

Let $E:=\{(x,y): x,y\in \mathbb{L}\}$ be the collection of unordered pairs on $\mathbb{L}$,
$\{w_{x,y}: (x,y) \in E\}$ be a sequence of non-negative independent random
variables, and $(X_t^{\w})_{t\ge 0}$ be the Markov process with infinitesimal generator
$L_{\mathbb{L}}^{\w}$ defined by \eqref{eq:geneoe}.
Obviously $(X_t^{\w})_{t\ge 0}$  is the symmetric Hunt process associated with the Dirichlet form
$(D_{V_1}^{\w},\F^{\w}_1)$ with $V_1=\mathbb{L}$ and $w^{(1)}_{x,y}(\w)=w_{x,y}(\w)$.

\begin{proposition}\label{ex3-1} Let $d:=d_1+d_2>4-2\alpha$.
Suppose that $\{w_{x,y}: (x,y)\in E\}$ is a sequence of non-negative independent
random variables satisfying that
$$\sup_{x,y \in \mathbb{L},x\neq y}\Pp\big(w_{x,y}=0\big)<1/2$$
and
\begin{equation}\label{ex3-1-1}
\sup_{x,y\in \mathbb{L}}\Ee[w_{x,y}^{2p}]<\infty~~~\mbox{and}~~~\sup_{x,y\in
\mathbb{L}}\Ee[w_{x,y}^{-2q}\I_{\{w_{x,y}>0\}}]<\infty
\end{equation}
with
$p>\max\big\{{{(d+2)}/{d}, (d+1)}/(2(2-\alpha))\big\}$ and $q>{(d+2)}/{d}.$ If moreover
\eqref{p3-2-0} holds true, then the quenched invariance principle
holds for
 $X^{\w}_{\cdot}$ with the limit process $Y$. Moreover, when $\alpha\in (0,1)$, the conclusion still holds true for $d>2-2\alpha$, if $p>\max\big\{{(d+1)}/(2(1-\alpha)), {(d+2)}/{d}\big\}$ and $q>{(d+2)}/{d}.$
\end{proposition}
\begin{proof}
According to Theorem \ref{t5-1}, it suffices to verify
\eqref{p3-2-1} --- \eqref{l3-1-1-1}.
We first verify \eqref{l3-1-2-a}.
Suppose that $p_0:=\sup_{x,y \in \mathbb{L},x\neq y}\Pp\big(w_{x,y}=0\big)<1/2$. Denote by
$L(x,r):=|\{y\in \mathbb{L}: |y-x|\le r\}|$.
Let $\{\eta_y\}_{\{y\in \mathbb{L}:|y-x|\le r\}}$ be a sequence of i.i.d.\ Bernoulli random variables (possibly on the extended probability space) such that $\Pp(\eta_y=1)=1-p_0$, $\Pp(\eta_y=0)=p_0$ and $\eta_y\le \I_{\{w_{z,y}\neq 0\}}$ a.s..
Take $1/2<c_0<1-p_0$, then for every $r>0$ and $x,z\in \mathbb{L}$,
\begin{align*}
\Pp\Big(\sum_{y\in \mathbb{L}:|y-x|\le r}
\I_{\{w_{z,y}\neq 0\}}\le c_0L(x,r)\Big)
&\le \Pp\Big(\sum_{y\in \mathbb{L}:|y-x|\le r}
\eta_y\le c_0L(x,r)\Big)\\
&=\Pp\left(\frac{\sum_{y\in \mathbb{L}:|y-x|\le r} (\eta_y-\Ee[\eta_y])}{L(x,r)}\le c_0-(1-p_0)\right)\\
&\le c'e^{-c''r^d},
\end{align*}
where in the first inequality we used the fact that $\sum_{y\in \mathbb{L}:|y-x|\le r}\eta_y\le \sum_{y\in \mathbb{L}:|y-x|\le r}\I_{\{w_{z,y}\neq 0\}}$ a.s., and the last inequality follows from the Cramer theorem for i.i.d.\ Bernoulli random variables and the fact $L(x,r)\asymp r^d$.
The estimate above yields that
$$
\sum_{R=1}^{\infty}\sum_{x,z\in
B_F(0,6R)\cap V_1}\sum_{r=R^{\theta}/2}^{2R}
p_6(x,z,r,c_0)<\infty.$$
This is, \eqref{l3-1-2-a} holds with $c_0$ chosen above.

Recall that, for a sequence of mutually independent random variables $\{\eta_n\}_{n\ge 1}$ satisfying
$\Ee\left[\eta_n\right]=0$ for every $n\in \mathbb{N}_+$, $M_n:=\sum_{k=1}^n \eta_k$ is a martingale with respect to the natural filtration. Then, by the
Burkholder-Gundy-Davis inequality, for any $l\ge 2$
and $n\in \mathbb{N}_+$,
\begin{equation}\label{extra1}
\begin{split}
& \Ee\left[\left|\sum_{k=1}^n \eta_k\right|^l\right]=\Ee\left[|M_n|^l\right]\le
c_1\Ee\left[\langle M\rangle_n^{l/2}\right]\le c_2n^{l/2-1}
\sum_{k=1}^n\Ee\left[|\eta_k|^l\right],
\end{split}
\end{equation}
where $\langle M \rangle_n=\sum_{k=1}^n \eta_k^2$ is the variational
process associated with $M_n$, and $c_1,c_2>0$ are independent of $n$.

Hence, by \eqref{extra1},  we know that for every $\varepsilon_0>0$, $R$, $r>0$, $c_0^*\ge2$,
$n\ge 1$ and a subsequence of bounded measurable functions $\{h_n\}_{n\ge1}$ on $\mathbb{L}\times \mathbb{L}$ such that $\sup_{n\ge1}\|h_n\|_\infty<\infty$,
\begin{align*}
p_1(r,R,\varepsilon_0)&\le\! \varepsilon_0^{-2p}R^{-2pd}r^{-2pd}
\Ee\Big[\Big|\sum_{x,y\in \mathbb{L}: |x|\le R, |y-x|\le r}\!\!(w_{x,y}\!-\!\Ee[w_{x,y}])\Big|^{2p}\Big]\!\le\! c_3(\varepsilon_0)r^{-pd}R^{-pd}, \\
p_2(x,r,\varepsilon_0)&\le \varepsilon_0^{-2p}r^{-2pd}\Ee
\Big[\Big|\sum_{y\in \mathbb{L}:|y-x|\le r}
\big(w_{x,y}-\Ee[w_{x,y}]\big)\Big|^{2p}\Big]\le c_4(\varepsilon_0)r^{-pd},\\
p_4(x,r,c_0^*,\varepsilon_0)&\le \varepsilon_0^{-2q}r^{-2qd}
\Ee\Big[\Big|\sum_{y\in \mathbb{L}: |y-x|\le c_0^*r}\big(w_{x,y}^{-1}-\Ee[w_{x,y}^{-1}]\big)\Big|^{2q}\Big]\le c_5(\varepsilon_0,c_0^*)r^{-qd},\\
p_5^{(n)}(x,N,\varepsilon,h_n,\varepsilon_0)&\le
c_6(\varepsilon_0)n^{2\alpha p} \cdot(n\varepsilon)^{-2p(d+\alpha)}\\
 &\quad \times
\Ee\left[\left|\sum_{y\in \mathbb{L}: |y-x|\ge n\varepsilon, |y|\le nN}h_n(x,y) (n\varepsilon)^{d+\alpha}\frac{(w_{x,y}-\Ee[w_{x,y}])}{|x-y|^{d+\alpha}}\right|^{2p}\right]\\
&\le c_7(\varepsilon_0, N,\varepsilon,\sup_{n\ge1}\|h_n\|_\infty)n^{2\alpha p}n^{pd}n^{-2p(d+\alpha)}=c_7(\varepsilon_0,N,\varepsilon,\sup_{n\ge1}\|h_n\|_\infty)n^{-pd}.
\end{align*}

In the following, we fix $x\in \mathbb{L}$. Set $\xi(y):=
\frac{(w_{x,y}-\Ee[w_{x,y}])}{|x-y|^{d+\alpha-2}}$ for every $y\in \mathbb{L}$ with
$y\neq x$.  Clearly,
$\{\xi(y)\}_{y\in \mathbb{L}: y\neq x}$ are mutually independent. By \eqref{ex3-1-1},
$\Ee[\xi(y)]=0$ and $\Ee[|\xi(y)|^{2p}]\le c_8|x-y|^{-2p(d+\alpha-2)}$.
Choosing $0<\delta<\frac{d+2\alpha-4}{d}$ (thanks to $d+2\alpha-4>0$) and applying the first inequality in \eqref{extra1}, we arrive at that for every $r\ge 1$
\begin{align*}
&\Ee\left[\left|\sum_{y\in \mathbb{L}: |y-x|\le r}\frac{\left(w_{x,y}-\Ee[w_{x,y}]\right)}{|x-y|^{d+\alpha-2}}\right|^{2p}\right]\\
&=\Ee\left[\left|\sum_{y\in \mathbb{L}: |y-x|\le r} \xi(y)\right|^{2p}\right]\le
c_{9}\Ee\left[\left|\sum_{y\in \mathbb{L}: |y-x|\le r}|\xi(y)|^{2}\right|^{p}\right]\\
&\le c_9\Ee\left[\left(\sum_{y\in \mathbb{L}: |y-x|\le r}|\xi(y)|^{2p}|x-y|^{d(p+\delta-1)}\right)\cdot
\left(\sum_{y\in \mathbb{L}: |y-x|\le r}|x-y|^{-\frac{d(p+\delta-1)}{p-1}}\right)\right]\le c_{10}.
\end{align*}
Here the second inequality above follows from the H\"older inequality,  in the last inequality we used
$\Ee[|\xi(y)|^{2p}]\le c_8|x-y|^{-2p(d+\alpha-2)}$, and $c_9, c_{10}$ are independent of $r$.
Then, by the Markov inequality, we know that for every $x \in \mathbb{L}$,
$p_3(x,R,\varepsilon_0)\le c_{11}(\varepsilon_0)R^{-2(2-\alpha)p}$.

Under assumptions of the proposition, we can choose $\theta\in
(0,1)$ (close to $1$) such that
$$p>\max\left\{\frac{d+1+\theta}{d\theta },\frac{d+1}{2\theta(2-\alpha)}\right\}\,\, \text{ and    }\,\, q>\frac{d+1+\theta}{d\theta},$$ also thanks to the condition that $d>4-2\alpha$ again. Then, according to all the estimates above, we know
immediately that
 \eqref{p3-2-1} --- \eqref{l3-1-2} hold for this $\theta\in (0,1)$ and
 every sufficiently small $\varepsilon_0>0$.

Suppose that $\alpha\in (0,1)$. If $d>2-2\alpha$, $p>\max\big\{{(d+1)}/(2(1-\alpha)), {(d+2)}/{d}\big\}$ and $q>{(d+2)}/{d}$, then we can choose $\theta\in
(0,1)$ (close to $1$) such that
$$p>\max\left\{\frac{d+1+\theta}{d\theta },\frac{d+1}{2\theta(1-\alpha)}\right\} \,\, \text{ and    }\,\, q>\frac{d+1+\theta}{d\theta}.$$ Following the argument above, we can prove that \eqref{p3-2-1} --- \eqref{p3-2-2}, \eqref{l3-1-2} and \eqref{l3-1-1-1} are satisfied.  Then, the desired assertion follows from Theorem \ref{t5-1} again. The proof is complete.
\end{proof}

Theorem \ref{th1} is a direct consequence of Proposition
\ref{ex3-1}, since \eqref{p3-2-0} holds trivially in this setting.
We note that the idea of the proof above using the
Burkholder-Gundy-Davis inequality comes from the proof of \cite[Theorem 1.1]{CCKW}.

\subsubsection{Time-change of $\alpha$-stable-like process on $\R^d$} We
can
consider the case that the approximating measure $m_n$ is not $n^{-d}\mu_n$.
Let us first fix the triple $(F,\rho,m)$ with $F=\R^d$,
$\rho$ being the Euclidean distance and $m(dx)=K(x)\,dx$, where $dx$ denotes the Lebesgue measure on $\R^d$ and
$K$ is a continuous function on $\R^d$ satisfying that $0<C_1\le K(x)\le C_2<\infty$ for some constants
$C_1\le C_2$.
Then, the process $Y$ associated with the Dirichlet form
$(D_0,\F_0)$ given at the beginning of
Subsection \ref{subsu5-1} is a time-change of symmetric $\alpha$-stable process on $\R^d$ with $c(x,y)=K(x)^{-1}K(y)^{-1}$ for $x,y\in \R^d$. It is obvious that $(D_0,\F_0)$
satisfies assumption {\bf(Dir.)}.

Similar to the previous part, we can take $V_1=\Z^d$, and $m_n=n^{-d}K_n\mu_n$
with $\mu_n$ being the counting measure on $n^{-1}\Z^d$ and
\begin{equation*}
K_n(x)=n^{-d}\int_{U_n(x)}K(x)\,dx,\quad x\in n^{-1}\Z^d,
\end{equation*}
where $U_n(x)=\Pi_{i=1}^d[x_i,x_i+n^{-1})$ for any $x=(x_1,\cdots,x_d)\in n^{-1}\Z^d$.

Suppose that $E:=\{(x,y): x,y\in \mathbb{L}\}$ is the collection of unordered pairs on $\mathbb{L}$,
and $\{w_{x,y}:
(x,y) \in E\}$ is a sequence of non-negative independent random
variables. Let $(X_t^{\w,n})_{t\ge 0}$ and $(Y_t^{\w,n})_{t\ge 0}$ be symmetric Hunt processes associated with the Dirichlet form
$(D_{V_n}^{\w},\F^{\w}_n)$ on $L^2(V_n;m_n)$ and $L^2(V_n;n^{-d}\mu_n)$ respectively, where
\begin{align*}
D_{V_n}^{\w}(f,f)&=\sum_{x,y\in V_n} \big(f(x)-f(y)\big)^2 \frac{w_{nx,ny}}{|x-y|^{d+\alpha}},\\
\F^{\w}_n&=\{f\in L^2(V_n;\mu_n): D_{V_n}^{\w}(f,f)<\infty\}.
\end{align*}
Let $(X_{t}^{\w})_{t\ge 0}:= (X_{t}^{\w,1})_{t\ge 0}$. It is easy to verify that
$\frac{1}{n}X_{n^\alpha t}^{\w}=Y_{A_{n,t}^{-1}}^{\w,n}$ with
$A_{n,t}:=\int_0^t K_n\big(Y_s^{\w,n}\big)\,ds$ for any $t>0$.

Note that for any compact set $S \subset \R^d$,
$
\lim_{n \rightarrow \infty}\sup_{x\in S}\big|K_n([x]_n)-K(x)\big|=0.
$
Following the same arguments in the proof of
Proposition \ref{ex3-1},  we can obtain that under assumption \eqref{ex3-1-1} the quenched invariance
principle holds for $(X_t^{\w})_{t\ge 0}$ with limiting process $Y$ being a time-change of symmetric $\alpha$-stable process on $\R^d$.

\begin{remark}\label{r5-2}
From the example above, we know that to identity the limit process consists of two ingredients. One
is to verify locally weak convergence of $m_n$ to $m$, and the other is to justify convergence of the jumping kernel
for the associated Dirichlet form. In fact, by carefully tracking the proof above, we can see that if the measure $m_n$ is replaced
by a more general (random) measure which converges locally weakly to $m$,
then the quenched invariance principle still holds
with the same limiting process.
\end{remark}

\subsubsection{Bounded Lipschitz domain}\label{bddLipd}
In fact, Proposition \ref{ex3-1} holds not only for a half/quarter space, but
also for the closure of a bounded Lipschitz domain in
$\mathbb{R}^d$,
whose intrinsic distance is equivalent to the Euclidean distance and whose
volume growth is with order $d$.
In details, let $F\subset \mathbb{R}^d$ be
a closed set such that for any $x,y\in F$ and $r>0$,
$
c_1r^{d}\le m(B_F(x,r))\le c_2r^{d}$ and
$
c_1|x-y|\le \rho_F(x,y)\le c_2|x-y|,
$
where $$\rho_F(x,y):=\inf\left\{\int_{0}^1 |\dot{\gamma}(s)|\,ds: \gamma\in C^1([0,1];F), \gamma(0)=x,\gamma(1)=y\right\}$$ is the
intrinsic distance on $F$, $m$ is the Lebesgue measure,
 and $B_F(x,r)$ is the ball with respect to $\rho_F$. For example, these properties are satisfied when $F$ is an inner uniform domain; see \cite[Chapter 2.3]{GS}.
For $x=(x_1,\cdots,x_d)\in n^{-1}\Z^d$,
set $U_n(x)=\Pi_{i=1}^d[x_i,x_i+n^{-1}).$
Note that when $F$ is the closure of a {bounded} Lipschitz domain, $V_n:=\{n^{-1}\mathbb{Z}^d\cap F: U_n(x)\subset F\}$
satisfies the properties given in Lemma \ref{L:ac}.
Suppose that $\{w_{x,,y}:(x,y)\in E\}$ is a sequence of independent
random variables satisfying the conditions in Proposition \ref{ex3-1}.
Then the conclusion of Proposition \ref{ex3-1} holds on $F$. Indeed, in this case, by taking
$V_n$ as above, the proofs of Theorem \ref{t5-1} and Proposition \ref{ex3-1} go through without any change
({with $\rho$ replaced by $\rho_F$ as explained in Remark \ref{r5-1}}). Note that neither $V_n=n^{-1}V_1$ nor
$X_t^{(n),\w}={n}^{-1}X_{n^{\alpha}t}^{V_1,\w}$
holds in general in this setting.
{(However, we can verify that $X_t^{(n),\w}={n}^{-1}X_{n^{\alpha}t}^{\tilde V_{n},\w}$, where
$\tilde V_n:=nV_n \subset nF.$)}
Note that the proofs do not require these properties, and the integrability condition
given for all $x,y\in \mathbb{Z}^d$ is (more than) enough for the estimates in the
proofs to hold.

\subsubsection{Fractal graph} The arguments in Example \ref{laths}
work for more general graphs
that satisfy (i)--(iv), and that its scaling limit $(F,\rho,m)$ and Dirichlet form which satisfy {\bf(MMS)} and {\bf(Dir.)} respectively as discussed at the beginning of subsection \ref{subsu5-1}. In particular, we can
prove quenched invariance principle for stable-like processes on various fractal graphs.

Here we introduce the most typical fractal graph; namely the Sierpinski gasket graph.
Let $e_0=(0,0,\cdots, 0)\in \R^{N}$, and for
$1\le i\le N$, $e_i$ be the unit vector in $\R^{N}$ whose $i$-th element is $1$. Set
$F_i(x)=(x-e_i)/2+e_i$ for $0\le i\le N$.
Then, there exists the unique non-void compact set such that
$K=\cup_{i=0}^N F_i(K)$; $K$ is called the $N$-dimensional
Sierpinski gasket. Set $F:=\cup_{n=0}^{\infty}2^nK$, which is the unbounded Sierpinski gasket. Let
\[V_1=\bigcup_{m=0}^\infty 2^m\Big(\bigcup_{i_1,\cdots,i_m=0}^NF_{i_1}\circ\cdots
\circ F_{i_m}(\{e_0,\cdots, e_N\})\Big),~~V_n=2^{-n+1}V_1.\]
(Hence, $n^{-1}$ in the definition of $V_n$ in the previous subsection is now
$2^{-n+1}$.) The closure of $\cup_{m\ge 1}V_m$ is $F$.
$F$ satisfies assumption {\bf(MMS)} with $d=\log (N+1)/\log 2$. We can naturally construct
a regular stable-like Dirichlet form satisfying
assumption {\bf(Dir.)}.
Let $\{w_{x,y}: x,y\in V_1\}$ be a sequence of independent random
variables. Then we have Proposition \ref{ex3-1}
with the same proof in this case as well.

\subsubsection{Constant speed $\alpha$-stable-like random walk on $\Z^d$}\label{constSRW}
We have considered in Theorem \ref{th1} the quenched invariance principle for variable speed $\alpha$-stable-like random walks. In this part, we will show
the quenched invariance principle for a class of constant speed $\alpha$-stable-like random walks on $\Z^d$, by using Theorem \ref{th1} and the time change argument.

Let $\mathbb{L}=\Z^d$ with $d>4-2\alpha$, and $E$ be the set of all unordered pairs on $\Z^d$.
Suppose that $\{w_{x,y}(\w): (x,y)\in E\}$ is the set of mutually independent non-negative random variables
such that $\Ee[w_{x,y}]=1$, \eqref{eq: prob} and \eqref{eq:fhibw} hold.
Suppose for simplicity that they have
the same distribution $\lambda$ on $[0,\infty)$.
In the following, without loss of generality we take $\Omega=[0,\infty)^{E}$, $\Pp=\prod_{(x,y)\in E}\lambda$ and
$w_{x,y}(\w)=\w(x,y)$ for any $\w\in \Omega$. For every $z\in \Z^d$, define $\tau_z:\Omega \rightarrow \Omega$ by
$\tau_z\w(x,y)=\w(x+z,y+z)$. It is standard to verify that $\Pp$ is stationary and ergodic with respect to the translations $\{\tau_z\}_{z\in \Z^d}$ of $\Z^d$, see e.g.\ the proof of \cite[Theorem 3.2]{BeB}.

Let $(X_t^{\w})_{t\ge 0}$ be the $\Z^d$-valued
process as in Theorem \ref{th1}, which is a variable speed $\alpha$-stable-like random walk. Now we define a
constant speed $\alpha$-stable-like random walk $(Z_t^\w)_{t\ge 0}$ via the time change as follows
\begin{equation*}
Z_t^\w:=X_{a_t(\w)}^\w,
\end{equation*}
where $a_t(\w):=\inf\{s\ge 0: A_s(\w)\ge t\}$, $A_t(\w):=\int_0^t \nu_{X_s^\w}(\w)\, ds$ and
$\nu_x(\w):=\sum_{y\in \Z^d: y\neq x}\frac{\w(x,y)}{|x-y|^{d+\alpha}}$ for every $x\in \Z^d$.
Letting $Z_t^{(n),\w}:=n^{-1}Z_{n^{\alpha}t}^\w$ and $X_t^{(n),\w}:=n^{-1}X_{n^{\alpha}t}^\w$,
we have $Z_t^{(n),\w}=X^{(n),\w}_{a_t^{(n)}(\w)}$, where $a_t^{(n)}(\w):=n^{-\alpha}a_{n^\alpha t}(\w)$.

According to the argument of \cite[Lemma 2.4]{ADS1}, we can prove that $\Pp$ is stationary,
reversible and ergodic with respect to the environment process $(\tau_{X_t^\w}\w)_{t\ge 0}$. Therefore,
by the ergodic theorem, we have
\begin{align*}
\lim_{t \rightarrow \infty} \frac{A_t(\w)}{t}=\lim_{t \rightarrow \infty} \frac{\int_0^t \nu_0(\tau_{X_s^\w}\w)\,ds}{t}=
\Ee[\nu_0(\w)]=\sum_{y\in \Z^d:y\neq 0}\frac{1}{|y|^{d+\alpha}}=:C_{d,\alpha},\,\,\,\, a.s..
\end{align*}
Combining this with the strictly increasing property of $t\mapsto a_t(\w)$ (which is due to the fact that  $\nu_x(\w)>0$), we prove that for every $T>0$,
\begin{equation}\label{p5-2-1}
\begin{split}
&\lim_{n \rightarrow \infty}\sup_{t\in [0,T]}|a_t^{(n)}(\w)-C_{d,\alpha}^{-1}t|\\
&=\lim_{n \rightarrow \infty}
\sup_{t\in [0,T]}|n^{-\alpha}a_{n^\alpha t}(\w)-C_{d,\alpha}^{-1}t|=0\\
&\le \lim_{n \rightarrow \infty}\sup_{t\in [n^{-\delta},T]}|n^{-\alpha}a_{n^\alpha t}(\w)-C_{d,\alpha}^{-1}t|
+\lim_{n \rightarrow \infty}\sup_{t\in [0,n^{-\delta}]}(n^{-\alpha}a_{n^\alpha t}(\w)+C_{d,\alpha}^{-1}t)\\
&\le \lim_{n \rightarrow \infty}\sup_{t\in [n^{-\delta},T]}|n^{-\alpha}a_{n^\alpha t}(\w)-C_{d,\alpha}^{-1}t|
+\lim_{n \rightarrow \infty}(n^{-\alpha}a_{n^{\alpha-\delta}}(\w)+C_{d,\alpha}^{-1}n^{-\delta})=0,\ a.s.,
\end{split}
\end{equation}
where $\delta$ is any fixed positive constant such that $\delta<\alpha$.

According to Theorem \ref{th1}, for any $T>0$ and a.s. $\w\in \Omega$, $X_{\cdot}^{(n),\w}$ converges under the Skorohod topology on
$\mathscr{D}([0,T];\R^d)$
to a symmetric $\alpha$-stable L\'evy process $Y_{\cdot}$ on $\R^d$ with jumping measure $|z|^{-d-\alpha}\,dz$.
Then, by the fact that $Z_t^{(n),\w}=X_{a^{(n)}_t(\w)}^{(n),\w}$, \eqref{p5-2-1} and the definition of the Skorohod topology, we can obtain that, for any $T>0$ and a.s. $\w\in \Omega$,
$Z_{\cdot}^{(n),\w}$ converges under the Skorohod topology on
$\mathscr{D}([0,T];\R^d)$
to the process $Y_{C_{d,\alpha}^{-1}\cdot}$.

\ \

\noindent {\bf Acknowledgements.}  The authors are very grateful to referees for their helpful suggestions of corrections. The research of Xin Chen is
supported by National Natural Science Foundation of China (No.\ 11501361 and No.\ 11871338).\ The research of Takashi Kumagai is supported by the
Grant-in-Aid for Scientific Research (A) 25247007 and 17H01093,
Japan.\ The research is
supported by the National Natural Science Foundation of China (No. 11831014), the Program for Probability and Statistics: Theory and Application (No.\ IRTL1704) and the Program for Innovative Research Team in Science and Technology in Fujian Province University (IRTSTFJ).

\end{document}